\newcommand{\dd}{\mathop{}\!\mathrm{d}}
\let\del\partial
\newcommand{\xaddspace}[0]{\mathchoice{\hspace{-0.8em}}
                {\hspace{-0.6em}}
                {\hspace{-0.4em}}
                {\hspace{-0.3em}}
                 }
\newcommand{\xint}[1]{\int\foreach \i in {2,...,#1}{\xaddspace\int}}
\newcommand{\myiint}[0]{\xint{2}}
\renewcommand{\iint}{\myiint}
\newcommand{\TTT}[0]{\mathbb{T}}
\newcommand{\R}[0]{\mathbb{R}}
\newcommand{\MM}{\mathring M}
\newcommand{\MMM}{\mathring{\bar M}}
\newcommand{\RR}{\mathring R}
\newcommand{\RRR}{\mathring{\bar R}}
\newcommand{\ootimes}{\mathbin{\mathring{\otimes}}}
\let\div\relax
\DeclareMathOperator{\div}{div}
\DeclareMathOperator{\tr}{Tr}
\newcommand{\vin}[0]{v^{\textup{in}}}
\newcommand{\bin}[0]{b^{\textup{in}}}
\newcommand{\pex}[0]{\textsf{\textit{p}}}
\newcommand{\vex}[0]{\textsf{\textit{v}}}
\newcommand{\bex}[0]{\textsf{\textit{b}}}
\newcommand{\vv}[0]{\bar v}
\newcommand{\bb}[0]{\bar b}
\newcommand{\ppp}[0]{\bar{\bar p}}
\newcommand{\pp}[0]{\bar p}
\newcommand{\wpqu}[1][q+1]{w^{\textup{(pu)}}_{#1}}
\newcommand{\wpqb}[1][q+1]{w^{\textup{(pb)}}_{#1}}
\newcommand{\wpq}[1][q+1]{w^{\textup{(p)}}_{#1}}
\newcommand{\wcq}[1][q+1]{w^{\textup{(c)}}_{#1}}
\newcommand{\wlq}[1][q+1]{w^{\Delta}_{#1}}
\newcommand{\wsq}[1][q+1]{w^{\textup{(s)}}_{#1}}
\newcommand{\wtq}[1][q+1]{w^{\textup{(t)}}_{#1}}
\newcommand{\wvq}[1][q+1]{w^{\textup{(v)}}_{#1}}
\newcommand{\whq}[1][q+1]{w^{\textup{(h)}}_{#1}}
\newcommand{\dvq}[1][q+1]{d^{\textup{(v)}}_{#1}}
\newcommand{\dhq}[1][q+1]{d^{\textup{(h)}}_{#1}}
\newcommand{\dsq}[1][q+1]{d^{\textup{(s)}}_{#1}}
\newcommand{\dlq}[1][q+1]{d^{\Delta}_{#1}}
\newcommand{\dtq}[1][q+1]{d^{\textup{(t)}}_{#1}}
\newcommand{\dpq}[1][q+1]{d^{\textup{(p)}}_{#1}}
\newcommand{\dcq}[1][q+1]{d^{\textup{(c)}}_{#1}}
\newcommand{\Rosc}{R_{q+1}^\textup{osc}}
\newcommand{\Rlinear}{R_{q+1}^\textup{lin}}
\newcommand{\Mosc}{M_{q+1}^\textup{osc}}
\newcommand{\Mlinear}{M_{q+1}^\textup{lin}}
\newcommand{\TT}[0]{\mathsf{T}}
\def\dashint{\,\ThisStyle{\ensurestackMath{%
  \stackinset{c}{.2\LMpt}{c}{.5\LMpt}{\SavedStyle-}{\SavedStyle\phantom{\int}}}%
  \setbox0=\hbox{$\SavedStyle\int\,$}\kern-\wd0}\int}
\def\ddashint{\,\ThisStyle{\ensurestackMath{%
  \stackinset{c}{.2\LMpt}{c}{.5\LMpt+.2\LMex}{\SavedStyle-}{%
    \stackinset{c}{.2\LMpt}{c}{.5\LMpt-.2\LMex}{\SavedStyle-}{%
      \SavedStyle\phantom{\int}}}}\setbox0=\hbox{$\SavedStyle\int\,$}\kern-\wd0}\int}
\newcommand{\coloneq}{\mathrel{\mathop:}=}
\DeclareMathOperator{\curl}{curl}
\newtheorem{thm}{Theorem}[section]
\newtheorem{cor}[thm]{Corollary}
\newtheorem{lem}[thm]{Lemma}
\newtheorem{prop}[thm]{Proposition}
\theoremstyle{definition}
\newtheorem{defn}[thm]{Definition}
\theoremstyle{remark}
\newtheorem{rem}[thm]{Remark}
\numberwithin{equation}{section}
\begin{document}
\title{On the weak solutions for the MHD systems with controllable total energy and cross helicity}

\author{
Changxing $\mbox{Miao}^{1}$ \footnote{email: miao changxing@iapcm.ac.cn}
\quad and\quad
Weikui $\mbox{Ye}^{2}$ \footnote{email:
904817751@qq.com}\\
$^1\mbox{Institute}$ of Applied Physics and Computational Mathematics,\\
P.O. Box 8009, Beijing 100088, P. R. China\\
$^2\mbox{School}$ of Mathematical Sciences, South China Normal University,\\
Guangzhou, 510631, China\\
}
\date{}
\maketitle
\begin{abstract}
In this paper, we prove the non-uniqueness of three-dimensional  magneto-hydrodynamic (MHD) system in $C([0,T];L^2(\mathbb{T}^3))$ for any initial data in  $H^{\bar{\beta}}(\mathbb{T}^3)$~($\bar{\beta}>0$), by exhibiting that the total energy and the cross helicity can be controlled in a given positive time interval. Our results extend the non-uniqueness results of the ideal MHD system  to the viscous and resistive MHD system. Different from the ideal MHD system, the dissipative effect in the viscous and resistive MHD system prevents the nonlinear term from balancing the stress error $(\RR_q,\MM_q)$ as doing in \cite{2Beekie}. We introduce the box flows and construct the perturbation consisting in seven different kinds of flows in convex integral scheme,  which ensures that the iteration works and yields the non-uniqueness.
\end{abstract}
\noindent \textit{Keywords}: convex integral iteration, the MHD system, weak solutions, cross helicity, non-uniqueness.\\
Mathematics Subject Classification: 35A02,~35D30,~35Q30,~76D05,~76W05.
\tableofcontents

\section{Introduction}
\label{s:intro}
In this paper, we consider the Cauchy problem of the following 3D MHD equations:
\begin{equation}
\left\{ \begin{alignedat}{-1}
\del_t v-\nu_1\Delta v+ \div (v\otimes v)  +\nabla p   &=\div (b\otimes b) ,
 \\
 \del_t b-\nu_2\Delta b+\div (v\otimes b)     &= \div (b\otimes v) ,
 \\
  \div v = 0,~\div b&= 0,
  \\ (v,b) |_{t=0}&=(\vin,\bin),
\end{alignedat}\right.  \label{mhd}
\end{equation}
where $v(t,x)$ is the fluid velocity, $b(t,x)$ is the magnetic fields and $p(t,x)$ is the scalar pressure. $\nu_1$ and $\nu_2$ are the viscous and resistive coefficients, respectively. We call \eqref{mhd} the viscous and resistive MHD system when $\nu_1,\nu_2>0$. For a given initial data $(\vin,\bin)\in H^{\bar{\beta}}(\TTT^3)$ \footnote{Here and throughout the paper, we denote $\mathbb{T}^3=[0,1]^3$ and $(f , g)\in X\times X$ by $(f, g)\in X$} with $\bar{\beta}>0$, we construct a weak solution of \eqref{mhd} with controllable total energy and cross helicity, which implies the non-uniqueness of weak solutions in $C([0,T];L^2(\mathbb{T}^3))$.

To begin with,  let us introduce the definition of weak solutions of \eqref{mhd}.
\begin{defn} Let $(\vin,\bin)\in L^2(\TTT^3) $. We say that  $(v,b)\in C([0,T];L^2(\mathbb{T}^3)) $ is a weak solution to \eqref{mhd}, if $\div v=\div b=0$ in the weak sense, and  for all divergence-free test functions $\phi\in C^\infty_0([0,T)\times \mathbb T^3)$,
\begin{align}
\int_0^T \int_{\mathbb T^3} (\del_t-\nu_1\Delta)\phi v+\nabla \phi : (v\otimes v-b\otimes b) \dd x \dd t=  -\int_{\mathbb T^3} \vin \phi(0,x)\dd x,\\
\int_0^T \int_{\mathbb T^3} (\del_t-\nu_2\Delta)\phi b+\nabla \phi : (v\otimes b-b\otimes v) \dd x \dd t=-\int_{\mathbb T^3} \bin \phi(0,x)\dd x.
\end{align}
\end{defn}

When $\nu_1=\nu_2=0$, \eqref{mhd} becomes the ideal MHD system:
\begin{equation}
\left\{ \begin{alignedat}{-1}
\del_t v+ \div (v\otimes v)  +\nabla p   &=\div (b\otimes b) ,
 \\
 \del_t b+\div (v\otimes b)     &= \div (b\otimes v) ,
 \\
  \div v = 0,~\div b&= 0,
  \\ (v,b) |_{t=0}&=(\vin,\bin).
\end{alignedat}\right.  \label{idealmhd}
\end{equation}
For the smooth solutions to \eqref{idealmhd}, they possess a number of physical invariants :

The total  energy: $e(t) = \int_{\mathbb{T}^3} (|v(t,x)|^2 + |b(t,x)|^2)\dd x;$

The cross helicity: $h_{v,b}(t) = \int_{\mathbb{T}^3} (v(t,x)\cdot b(t,x))\dd x;$

The magnetic helicity: $h_{b,b}(t) = \int_{\mathbb{T}^3} (A(t,x)\cdot b(t,x))\dd x,$\\
where $A$ is a periodic vector field with  zero mean  satisfying $\curl A = b$.

{In 1949, Lars Onsager conjectured that the H\"{o}lder exponent threshold for the energy conservation of weak solutions of the Euler equations is  $1/3$. Since then, many mathematicians are devoted to proving Onsager conjecture on the Euler equations and there have been a flood of papers with  this problem \cite{cwe,MV,NV,27DeLellis,28DeLellis,Isett,zbMATH06312794,zbMATH06710292,zbMATH07370998,Rosa2021DimensionOT}.
In recent years, Onsager-type conjectures on the ideal MHD equations which possess several physical invariants  have caught researchers' interest and some progress has been made on related issues.} For instance, in \cite{1Hydrodynamic,34Caflisch,45Kang}, the magnetic helicity conservation for the 3D ideal MHD was proved in the critical space $L^3_{t,x}$. Later, Faraco-Lindberg-Sz\'{e}kelyhidi
\cite{37Faraco} showed that the $L^3_{t,x}$
integrability condition for the magnetic helicity conservation is sharp.
The cross helicity and total energy are conservative when the weak solutions $(v,b)\in L^3_tB^{\alpha}_{3,\infty}$ with $\alpha > \frac{1}{3}$, but whether they are conservative for $\alpha\leq \frac{1}{3}$ or not is still an open problem. Throughout current literatures, whether the solution satisfies the physical invariants or not plays a key role in studying the non-uniqueness problems. 
In \cite{36Faraco}, Faraco-Lindberg-Sz\'{e}kelyhidi constructed non-trivial weak solutions with compact support in space $L^{\infty}_{t,x}$. 
Beekie-Buckmaster-Vicol \cite{2Beekie} constructed distributional solutions in $C_tL^2_x$ breaking the magnetic helicity conservation, and showed the non-uniqueness of weak solutions. 

System \eqref{idealmhd} with $b\equiv0$ becomes the famous Euler equations. Many authors are devoted to the study of the non-uniqueness issue on Euler equations.
In the pioneering paper \cite{27DeLellis}, De Lellis-Sz\'{e}kelyhidi developed the convex integration scheme
and constructed the weak solutions in $L^{\infty}_{t,x}$ with compact support to the 3D Euler equations, see
also \cite{28DeLellis}. After that, there have been a series of results on non-uniqueness of weak solutions.  The Onsager conjecture was finally solved in $
C^{\beta}_{x,t}~ (0 < \beta < 1/3)$ by Isett \cite{Isett}, and by
Buckmaster-De Lellis-Sz\'{e}kelyhidi-Vicol \cite{zbMATH07038033} for admissible weak solutions. {Very recently, Daneri \cite{zbMATH06312794}, subsequently in \cite{zbMATH06710292} and  Runa \cite{zbMATH07370998}  considered non-uniqueness problem by constructing wild initial data which is $L^2(\mathbb T^3)$-dense, while Rosa and Haffter \cite{Rosa2021DimensionOT} also showed that any smooth initial data gives rise to uncountably many solutions.}

For the incompressible Navier-Stokes equations \eqref{mhd} with $b\equiv0$, there have many results on the non-uniqueness problems. Buckmaster-Vicol in \cite{13Nonuniqueness} made the first important break-through by making use of a
${L^2_x}$
-based intermittent convex integration scheme. Subsequently, Buckmaster, Colombo and  Vicol \cite{2018Wild} showed that the wild solutions can be generated by  $H^3$ initial data.
 Recently, another non-uniqueness
result based on Serrin condition for the Navier-Stokes equations was proved by Cheskidov-Luo \cite{1Cheskidov}, which shows the sharpness of the Ladyzhenskaya-Prodi-Serrin criteria $\frac{2}{p}+\frac{d}{q}\le 1$ at the endpoint $(p,q)=(2,\infty)$. In \cite{leray},  Albritton, Bru\'{e} and Colombo proved the non-uniqueness of the Leray-Hopf solutions with a special force by skillfully constructing a ``background'' solution which is unstable for the Navier-Stokes dynamics in similarity variables. For the 3D hyper-viscous NSE, Luo-Titi \cite{luotianwen} also proved the non-uniqueness results, whenever the exponent of viscosity is less than the Lions exponent 5/4.

For the viscous and resistive MHD system, the existence of Leray-Hopf solutions to the MHD equations was proved by Wu \cite{67Wu}. In \cite{lyc}, Li, Zeng and Zhang proved the non-uniqueness of weak solutions in $H^{\epsilon}_{t,x}$, where $\epsilon$ sufficiently small. However, the uniqueness of Leray-Hopf solutions is unsolved, even in $C_tL^2_x\cap L^2_t\dot{H}^1_x$ is still open. In \cite{13Nonuniqueness}, the non-uniqueness result for the Navier-Stokes equations also imply the non-uniqueness of the viscous and resistive MHD system with trivial magnetic  field in $C_tL^2_x$. One natural problem is  {\emph{whether the viscous and resistive MHD system with non-trivial magnetic fields in $C_tL^2_x$ is unique or not}}. In this paper, we solve this problem by showing the non-uniqueness of \eqref{mhd} with $\nu_1,\nu_2>0$. Now we are in position to state the main result.
\begin{thm}\label{t:main0}
A weak solution $(v,b)$ of the viscous and resistive MHD system in $C([0,T];L^2(\mathbb{T}^3))$ is non-unique if $(v,b)$ has at least one interval of regularity. Moreover, there exist non-Leray-Hopf weak solutions $(v,b)$ in $C([0,T];L^2(\mathbb{T}^3) )$.
\end{thm}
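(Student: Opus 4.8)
The plan is to prove Theorem~\ref{t:main0} by a convex integration scheme adapted to the dissipative MHD system. The central object is the \emph{MHD--Reynolds system}
\[
\del_t v-\nu_1\Delta v+\div(v\otimes v-b\otimes b)+\nabla p=\div\RR,\qquad
\del_t b-\nu_2\Delta b+\div(v\otimes b-b\otimes v)=\div\MM,\qquad \div v=\div b=0,
\]
with $\RR$ symmetric trace-free and $\MM$ antisymmetric. I would prove a main iteration proposition: from a smooth solution $(v_q,b_q,\RR_q,\MM_q)$ on $[0,T]$ whose size and frequency are governed by parameters $\delta_q$ and $\lambda_q$, build a new smooth solution $(v_{q+1},b_{q+1},\RR_{q+1},\MM_{q+1})$ with $\|(\RR_{q+1},\MM_{q+1})\|_{C_tL^1}\lesssim\delta_{q+2}$, $\|v_{q+1}-v_q\|_{C_tL^2}+\|b_{q+1}-b_q\|_{C_tL^2}\lesssim\delta_{q+1}^{1/2}$, suitable $C^1_{t,x}$ bounds, and with the total energy and cross helicity of $(v_{q+1},b_{q+1})$ lying $\delta_{q+2}$-close to prescribed profiles $e(t)$ and $h(t)$. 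Summing the absolutely convergent telescoping series produces a weak solution $(v,b)\in C([0,T];L^2(\TTT^3))$ of \eqref{mhd} with $\int_{\TTT^3}(|v|^2+|b|^2)\dd x=e(t)$ and $\int_{\TTT^3}v\cdot b\dd x=h(t)$.

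For the perturbation step, the dissipative terms $\nu_i\Delta$ prevent the nonlinearity from directly cancelling $(\RR_q,\MM_q)$ as in the ideal case \cite{2Beekie}, so one must use intermittent, spatially concentrated building blocks. I would take the perturbation in the form $w_{q+1}=\wpqu+\wpqb+\wcq+\wtq+\wvq+\whq+\wsq$, a sum of seven kinds of flows: principal velocity and magnetic \emph{box flows} $\wpqu,\wpqb$ supported on thin rectangular boxes, whose low-frequency modulated quadratic self-interactions cancel $\RR_q$ and $\MM_q$ respectively; a divergence corrector $\wcq$ enforcing $\div w_{q+1}=0$; a temporal corrector $\wtq$ absorbing $\del_t$ of the principal part; energy and cross-helicity correctors $\wvq,\whq$ with coefficients $\dvq,\dhq$ nudging $\|(v_q,b_q)\|_{L^2}^2$ and $\int v_q\cdot b_q\dd x$ toward $e(t),h(t)$; and a further corrector $\wsq$. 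The box geometry — rather than Mikado or Beltrami flows — is what allows the symmetric stress $\RR_q$ and the antisymmetric stress $\MM_q$ to be handled at once, and it keeps every resulting error $\lesssim\delta_{q+2}$: the Nash error $\Rnash$, the oscillation errors $\Rosc,\Mosc$, the transport/temporal errors $\Rtransport,\Mtransport$, and — decisively — the \emph{new linear errors} $\Rlinear,\Mlinear$ coming from $\nu_1\Delta w_{q+1}$ and $\nu_2\Delta w_{q+1}$, all estimated via the inverse-divergence operator together with commutator bounds, and small provided $\lambda_q\to\infty$ superexponentially with a suitable gap, so that spatial concentration beats dissipation.

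To conclude, let $(v,b)$ be a weak solution with an interval of regularity $I=(t_0,t_1)\subset[0,T]$ — for instance the trivial solution, for which $I=(0,T)$, or a Leray--Hopf solution on an interval where it is smooth. On $I$ the solution is smooth, so one may start the iteration at time $t_0$ from $(v,b)$ with vanishing stress, with every building block supported away from $\partial I$ via temporal cutoffs, and with a prescribed energy/cross-helicity profile agreeing with that of $(v,b)$ near $\partial I$ but differing inside $I$. The resulting limit $(\tilde v,\tilde b)$ coincides with $(v,b)$ on $[0,T]\setminus I$, hence is a weak solution of \eqref{mhd} on $[0,T]$ with the same initial datum, yet differs from $(v,b)$ inside $I$; this gives non-uniqueness, and varying the profile gives infinitely many solutions. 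Prescribing, in addition, an energy profile that is strictly increasing somewhere in $I$ — which violates the energy inequality — yields a weak solution that is not Leray--Hopf.

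The main obstacle is the perturbation step: one must design box flows that cancel the symmetric error $\RR_q$ and the antisymmetric error $\MM_q$ simultaneously while being intermittent enough to dominate the dissipative linear errors $\Rlinear,\Mlinear$, yet not so intermittent that the oscillation and temporal errors blow up. Threading this needle is precisely what dictates the parameter hierarchy ($\delta_q$, $\lambda_q$, the intermittency exponents, the frequency gap) and forces the careful bookkeeping of the seven flows and all their pairwise interactions.
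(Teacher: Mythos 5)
Your overall strategy is the same as the paper's: Theorem \ref{t:main0} is deduced from a main theorem (Theorem \ref{t:main}) asserting that from any $H^{\bar\beta}$ initial data one can build a $C_tL^2_x$ weak solution whose total energy and cross helicity follow prescribed profiles on a later time interval, and that main theorem is proved by exactly the kind of iteration you describe: an MHD--Reynolds system, intermittent box flows replacing Mikado flows because the dissipative terms destroy the direct cancellation of $(\RR_q,\MM_q)$, a perturbation split into seven flows, and linear errors $\Rlinear,\Mlinear$ from $\del_t-\Delta$ controlled by intermittency. Your labelling of the seven flows differs in places (in the paper $\wvq$ is an inverse traveling wave flow and the energy is adjusted through the amplitude $\rho_{v,q}$ of the principal flow rather than through a separate energy corrector; you also omit the heat-conduction flows $\wlq$, which are needed precisely to absorb $\Delta\wvq$), but these are details of the same architecture.

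The one step that would fail as written is the final gluing. You claim that by supporting the building blocks away from $\partial I$ the new solution $(\tilde v,\tilde b)$ coincides with $(v,b)$ on all of $[0,T]\setminus I$. A convex integration scheme for a dissipative system cannot match prescribed \emph{terminal} data at $t_1$: matching the energy profile near $t_1$ does not force $(\tilde v,\tilde b)(t_1)=(v,b)(t_1)$, and several of the correctors (e.g. $\wsq=e^{t\Delta}(\cdots)$ and the Duhamel-type heat-conduction flows) are not compactly supported in time, so the perturbation necessarily propagates forward past $t_1$. This is harmless for the theorem, since non-uniqueness only requires forward gluing at the left endpoint: pick $t_0'$ in the regularity interval, run the construction on $[t_0',T]$ from the data $(v(t_0'),b(t_0'))\in H^{\bar\beta}$ with an energy or cross-helicity profile different from that of $(v,b)$, and concatenate with $(v,b)|_{[0,t_0']}$; the non-Leray--Hopf statement then follows, as you say, by prescribing an increasing energy. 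You should simply drop the claim of agreement for $t>t_1$.
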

\begin{rem}
{For the ideal MHD system \eqref{idealmhd} with non-trivial magnetic fields, Beekie-Buckmaster-Vicol in \cite{2Beekie} proved the non-uniqueness for the weak solutions in $C_tL^2_x$. However, for the viscous and resistive MHD system \eqref{mhd}, the uniqueness for solutions in $C_tL^2_x$ is still unsloved.  Theorem \ref{t:main0} solves this problem and extends the non-uniqueness results of the ideal MHD system to the viscous and resistive MHD system.}

  Compared with the ideal MHD system, the dissipative effect prevents the nonlinear term from balancing the stress error $(\RR_q,\MM_q)$ as doing in \cite{2Beekie}. This
 leads to the major difficulty in convex integral iteration in $C_tL^2_x$. A nature choice is using 3D box type flows instead of the Mikado flows in convex integral iteration. However, these 3D box type flows do not have enough freedom on the oscillation directions in the velocity and magnetic flows, which will give rise to additional errors in the oscillation terms. Inspired by \cite{1Cheskidov,2Beekie,13Nonuniqueness}, we construct ``temporal flows'' and ``Inverse traveling wave flows'' to eliminate these extra errors, which help us construct a weak solution by combining with the principal flows. Moreover, we construct the so-called ``Initial flows'' and ``Helicity flows'' to achieve
$$(v(0,x),b(0,x))=(\vin,\bin),~~\text{and}~~\int_{\mathbb T^3}(|v|^2+|b|^2)\dd x=e(t),~~\text{and}~~~\int_{\mathbb T^3}v\cdot b\dd x=h(t),~~t\in[1,T],$$
which yields the non-uniqueness the weak solution.
\end{rem}

We now present a main theorem, which immediately implies  Theorem \ref{t:main0} by showing that the total energy and the cross helicity can be controlled in a given positive time interval:
\begin{thm}[Main theorem]\label{t:main}
Let $T,\bar{\beta}>0$ and $(\vin,\bin) \in H^{\bar{\beta}} (\mathbb T^3)$. For fixed $\delta_2>0$, assume that there exists two smooth functions $e(t),h(t)$ satisfying
\begin{align}\label{helicity-0}
\frac{\delta_{2} }{2} &\le e(t) -\int_{\mathbb T^3} (|\vin|^2+|\bin|^2) \dd x \le \frac{3\delta_{2} }{4},~~t\in[\tfrac{1}{2},T]
\end{align}
and
\begin{align}\label{helicity-1}
\frac{\delta_{2} }{200} &\le h(t) -\int_{\mathbb T^3} \vin\cdot\bin \dd x \le \frac{\delta_{2} }{50},~~t\in[\tfrac{1}{2},T].
\end{align}
Then there exists a weak solution $(v,b)\in C([0,T];L^2(\mathbb T^3))$ to the viscous and resistive MHD system with initial data $(\vin,\bin)$. Moreover,  we have
$$\int_{\mathbb T^3}(|v|^2+|b|^2)\dd x=e(t)~\text{and}~\int_{\mathbb T^3}v\cdot b\dd x=h(t),~~t\in[1,T]$$
where $h(t):=h_{v,b}(t)$ denotes the cross helicity.
\end{thm}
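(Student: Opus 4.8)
The plan is to run an intermittent convex integration iteration for the MHD--Reynolds system. For every $q\in\mathbb N$ I would produce a smooth quadruple $(v_q,b_q,\RR_q,\MM_q)$, with $v_q,b_q$ divergence free, $\RR_q$ symmetric and $\MM_q$ antisymmetric, solving
\begin{align*}
\del_t v_q-\nu_1\Delta v_q+\div(v_q\otimes v_q-b_q\otimes b_q)+\nabla p_q &=\div\RR_q,\\
\del_t b_q-\nu_2\Delta b_q+\div(v_q\otimes b_q-b_q\otimes v_q) &=\div\MM_q,
\end{align*}
together with inductive estimates governed by an amplitude parameter $\delta_q$ and a frequency parameter $\lambda_q$: geometric decay $\|\RR_q\|_{L^1}+\|\MM_q\|_{L^1}\le\delta_{q+1}$; a bound on $\|v_q\|_{C^1_{t,x}}+\|b_q\|_{C^1_{t,x}}$ by a fixed power of $\lambda_q$; $C_tL^2_x$ smallness of the increments, $\|v_{q+1}-v_q\|_{C_tL^2_x}+\|b_{q+1}-b_q\|_{C_tL^2_x}\lesssim\delta_{q+1}^{1/2}$; time-support conditions forcing $(v_{q+1},b_{q+1})=(v_q,b_q)$ near $t=0$; and quantitative control of the energy gap $e(t)-\int_{\mathbb T^3}(|v_q|^2+|b_q|^2)\dd x$ and the cross-helicity gap $h(t)-\int_{\mathbb T^3}v_q\cdot b_q\dd x$ on $[\tfrac12,T]$. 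The base step $q=0$ would be built from the smooth solution of the linear viscous problem with data $(\vin,\bin)$, plus a small bump supported away from $t=0$ opening the prescribed gaps consistently with \eqref{helicity-0}--\eqref{helicity-1}.

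The core is the one-step proposition producing $(v_{q+1},b_{q+1},\RR_{q+1},\MM_{q+1})$ from level $q$. After mollifying $(v_q,b_q,\RR_q,\MM_q)$ in space and time to $(v_\ell,b_\ell,\RR_\ell,\MM_\ell)$ and fixing a temporal partition of unity, I would take $w_{q+1}=v_{q+1}-v_q$ and $d_{q+1}=b_{q+1}-b_q$ to be a sum of seven families of $\lambda_{q+1}$-oscillatory, intermittent flows: (i) principal velocity and (ii) principal magnetic \emph{box flows} $\wpqu,\wpqb$ (a 3D, intermittent substitute for the Mikado flows of the inviscid case), whose amplitude coefficients are chosen via a geometric lemma so that the low-frequency parts of $\wpqu\ootimes\wpqu-\wpqb\ootimes\wpqb$ and of $\wpqu\otimes\wpqb-\wpqb\otimes\wpqu$ cancel $\RR_\ell$ and $\MM_\ell$ (up to the prescribed energy increment); (iii) divergence correctors $\wcq,\dcq$ restoring $\div w_{q+1}=\div d_{q+1}=0$; (iv) temporal flows $\wtq,\dtq$ absorbing the time derivative of the amplitudes; (v) ``inverse traveling wave'' flows $\wvq,\dvq$ removing the extra oscillation errors forced by the limited directional freedom of 3D box flows; (vi) ``initial flows'', active only near $t=0$, restoring $v_{q+1}(0,\cdot)=\vin$ and $b_{q+1}(0,\cdot)=\bin$; and (vii) ``helicity flows'' $\whq,\dhq$, a co-located velocity/magnetic pair whose sign and amplitude are tuned to move $\int_{\mathbb T^3}v_{q+1}\cdot b_{q+1}\dd x$ independently of $\int_{\mathbb T^3}(|v_{q+1}|^2+|b_{q+1}|^2)\dd x$, so that both gaps contract on $[1,T]$; the windows \eqref{helicity-0}--\eqref{helicity-1} are used precisely to keep the energy gap positive and dominant over the helicity gap, so that the amplitudes above are well defined. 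The new stresses $\RR_{q+1},\MM_{q+1}$ are then assembled, via the antidivergence operator $\mathcal R$, from the Nash, transport, oscillation and linear (dissipative) contributions, the last absorbing $-\nu_1\Delta w_{q+1}$ and $-\nu_2\Delta d_{q+1}$; each piece is estimated in $L^1$ to be $\le\delta_{q+2}$, which closes the induction for a suitable choice of $\delta_q,\lambda_q$ and of the concentration, oscillation and time-cutoff parameters of the box flows.

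Granting the one-step proposition, the iteration closes in the usual way: $\sum_q(\|w_{q+1}\|_{C_tL^2_x}+\|d_{q+1}\|_{C_tL^2_x})<\infty$ yields $(v_q,b_q)\to(v,b)$ in $C([0,T];L^2(\mathbb T^3))$; $\|\RR_q\|_{L^1}+\|\MM_q\|_{L^1}\to0$ makes $(v,b)$ a weak solution of \eqref{mhd}; the time-support conditions preserve the data $(\vin,\bin)$; and the energy/helicity bookkeeping passes to the limit, giving $\int_{\mathbb T^3}(|v|^2+|b|^2)\dd x=e(t)$ and $\int_{\mathbb T^3}v\cdot b\dd x=h(t)$ on $[1,T]$. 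Varying $e,h$ inside \eqref{helicity-0}--\eqref{helicity-1} then also yields the non-uniqueness.

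I expect the main obstacle to be the simultaneous control of intermittency, oscillation cancellation, temporal errors and the dissipative error. Intermittency of the box flows is what makes it possible to absorb $-\nu_1\Delta w_{q+1},-\nu_2\Delta d_{q+1}$ (and the Nash and transport errors, which here no longer carry the helpful structure exploited in the inviscid scheme of \cite{2Beekie}) into $\RR_{q+1},\MM_{q+1}$; but genuinely three-dimensional box building blocks cannot be freely oriented along prescribed directions, so they generate additional high--low frequency oscillation errors. Making the parameter budget work requires a delicate balance, and the whole point of the temporal flows in step (iv) and the inverse traveling wave flows in step (v) is to neutralize those extra errors — so the most technically demanding part of the argument is checking that these auxiliary flows contribute only $O(\delta_{q+2})$ to $\RR_{q+1}$ and $\MM_{q+1}$.
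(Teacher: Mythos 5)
Your plan reproduces most of the paper's architecture (box flows, temporal and inverse-traveling-wave flows, divergence correctors, initial and helicity flows, and the same bookkeeping for energy and cross helicity), but it omits one device the viscous scheme cannot do without: the \emph{heat conduction flows} $(\wlq,\dlq)$. You reach a count of seven families by splitting the principal flow into its velocity and magnetic parts, whereas the paper's seventh family is precisely $(\wlq,\dlq)$, built from Duhamel integrals $\int_0^t e^{(t-\tau)\Delta}(\cdots)\dd\tau$ so that $\del_t\wlq-\Delta\wlq$ reproduces exactly both $\Delta\wvq$ and the extra high-frequency term $\div\big(\phi_k^2\phi_{\bar{\bar k}}^2\,\bar{\bar k}\otimes\bar{\bar k}\big)$, up to commutators in which derivatives fall only on the slow amplitudes $a^2_{b,l,k}$. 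Without this, your statement that the linear error simply ``absorbs $-\nu_1\Delta w_{q+1}$'' fails for the $\wvq$ piece: $\wvq$ is $(\mu\sigma)^{-1}$ times the divergence of a profile oscillating at frequency $\sigma r^{-1}=\lambda_{q+1}^{1/128+7/8}$, so $\Delta$ costs a factor $(\sigma r^{-1})^2$ of which the antidivergence $\mathcal R$ recovers only one power; the leftover factor $\sim\lambda_{q+1}^{7/8}$ overwhelms the smallness of $\|\wvq\|_{L^1}$, and $\|\mathcal R\Delta\wvq\|_{L^1}$ is large rather than $O(\lambda_{q+1}^{-50\alpha}\delta_{q+2})$ --- intermittency alone does not save it. This is exactly where the viscous problem departs from the ideal one, and it is why the paper estimates $\Rlinear$ and $\Rosc$ only after transferring the combination $\mathcal R[(\del_t\wtq+\del_t\wvq)+(\del_t\wlq-\Delta\wlq-\Delta\wvq)]$ between them (Propositions \ref{Rlinear}, \ref{Mosclation} and \ref{Rosclation}) instead of treating the dissipative term wholesale. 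Your plan needs this extra flow (or an equivalent mechanism) to close.

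A secondary structural point: the paper does not pass directly from the mollified solution to the perturbation. It first glues exact local solutions $(\vex_l,\bex_l)$ of the viscous MHD system along a $\tau_q$-partition, so that $(\RRR_q,\MMM_q)$ vanishes on the intervals $J_l$; this vanishing is what makes the amplitude cutoffs $\chi_v,\chi_b$ identically $1$ there and allows the squiggling cutoffs $\tilde\eta_l$ to prescribe the energy and helicity \emph{exactly} on $[1,T]$ while the straight cutoffs keep the perturbation zero at $t=0$. Your ``temporal partition of unity'' is too coarse as stated to deliver the exact identities $\int_{\mathbb T^3}(|v|^2+|b|^2)\dd x=e(t)$ and $\int_{\mathbb T^3}v\cdot b\dd x=h(t)$ rather than approximate ones.
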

\begin{rem}
For a given $(\vin,\bin) \in H^{\bar{\beta}} $, one can choose infinitely many functions $e(t),h(t)$ satisfying \eqref{helicity-0} and \eqref{helicity-1}, which implies the non-uniqueness of weak solutions. Moreover, we will prove that $(v,b)\in  C([0,T];H^{\epsilon}(\mathbb T^3))$ with $0<\epsilon\ll\bar{\beta}$ in Section 2.2.
\end{rem}

For the ideal MHD system \eqref{idealmhd}, one can obtain a similar result after a simple modification to the proof of Theorem~\ref{t:main}.
\begin{thm}\label{ideal mhd}
Let $T,\bar{\beta}>0$ and $(\vin,\bin) \in H^{\bar{\beta}} (\mathbb T^3)$. Then there exist infinitely many smooth functions $e(t),h(t)$ associated with a weak solution $(v,b)\in C([0,T];L^2(\mathbb T^3))$ to the ideal MHD system \eqref{idealmhd} with initial data $(\vin,\bin)$. Moreover, we have
$$\int_{\mathbb T^3}(|v|^2+|b|^2)\dd x=e(t)~~\text{and}~~\int_{\mathbb T^3}v\cdot b\dd x=h(t),~~~t\in[1,T].$$
\end{thm}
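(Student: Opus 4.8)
The plan is to reuse the convex integration machinery developed for the viscous and resistive case (Theorem \ref{t:main}) essentially verbatim, observing that dropping the dissipative terms $-\nu_1\Delta v$ and $-\nu_2\Delta b$ only \emph{simplifies} the scheme. Concretely, I would set up the same sequence of approximate solutions $(v_q,b_q,\RR_q,\MM_q)$ solving the relaxed system
\begin{equation*}
\del_t v_q + \div(v_q\otimes v_q - b_q\otimes b_q) + \nabla p_q = \div \RR_q, \qquad
\del_t b_q + \div(v_q\otimes b_q - b_q\otimes v_q) = \div \MM_q,
\end{equation*}
with the $\nu_i\Delta$ terms removed, and run the identical inductive estimates on $\|v_q\|_{L^2}$, $\|b_q\|_{L^2}$, $\|\RR_q\|_{L^1}$, $\|\MM_q\|_{L^1}$ along the parameters $\lambda_q,\delta_q$. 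The perturbation $(w_{q+1},d_{q+1})$ is built from the same seven families of flows — principal (box) flows, corrector flows, temporal flows, inverse traveling wave flows, initial flows, helicity flows — and the cancellation of the oscillation error $\Rosc$, the transport error $\Rtransport$, and the linear error $\Rlinear$ proceeds exactly as before.

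The only point that genuinely changes is the treatment of the linear error. In the viscous case the Laplacian terms $\nu_1\Delta w_{q+1}$, $\nu_2\Delta d_{q+1}$ must be absorbed into the new stresses, and because the box/intermittent flows are highly concentrated these contribute terms of size $\nu_i \lambda_{q+1}^2 \mu_{q+1}^{\text{something}}$ that constrain the parameter hierarchy. In the ideal case these terms are simply absent, so the corresponding inverse-divergence estimates (using the operator $\mathcal R$ or $\div^{-1}$) only need to control $\del_t w_{q+1}$, $\del_t d_{q+1}$ against the spatial oscillation, which is strictly easier. Hence every inequality used to close the induction in Theorem \ref{t:main} remains valid a fortiori; one just verifies that the chosen $\lambda_q,\delta_q,\mu_q,\dots$ (or a mild relaxation thereof) still satisfy the — now fewer — constraints. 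The construction of the initial flows enforcing $(v_q(0,\cdot),b_q(0,\cdot)) = (\vin,\bin)$ at every stage, and of the helicity flows correcting the energy profile $e(t)$ and cross-helicity profile $h(t)$ on $[1,T]$, is untouched, since those arguments never used the viscosity.

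Assembling the pieces: the sequence $(v_q,b_q)$ converges in $C([0,T];L^2(\TTT^3))$ to a limit $(v,b)$ with $\RR_q,\MM_q \to 0$ in $L^1_{t,x}$, so $(v,b)$ is a weak solution of \eqref{idealmhd} with initial data $(\vin,\bin)$; the uniform $H^\epsilon$ bound survives the limit, and by construction $\int_{\TTT^3}(|v|^2+|b|^2)\dd x = e(t)$ and $\int_{\TTT^3} v\cdot b\dd x = h(t)$ on $[1,T]$. Finally, since \eqref{helicity-0}–\eqref{helicity-1} are not needed here — there is no background Leray-Hopf competitor to match — one has complete freedom: any two smooth functions $e(t),h(t)$ that merely agree with $\int(|\vin|^2+|\bin|^2)$ and $\int \vin\cdot\bin$ at $t=0$ (together with mild compatibility of derivatives) can be realized, giving infinitely many distinct solutions. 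I expect the only real work to be bookkeeping: rechecking that each estimate in the proof of Theorem \ref{t:main} that invoked a $\nu_i$-dependent term still holds (trivially) with that term set to zero, and confirming no parameter inequality was \emph{driven} by viscosity in a way that would need a genuinely different hierarchy — a routine but slightly tedious audit rather than a new idea.
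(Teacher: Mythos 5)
Your proposal takes essentially the same route as the paper, which proves this theorem only by the remark that it follows from ``a simple modification'' of the proof of Theorem \ref{t:main}: drop the dissipative terms, so the heat-conduction flows $\wlq,\dlq$ and the $\Delta w_{q+1}$, $\Delta d_{q+1}$ contributions to the linear error disappear, replace the heat-semigroup objects ($v_1$, $\wsq$, the exact viscous flows in the gluing step) by their inviscid counterparts, and re-run the identical iteration. One caveat: your closing claim of ``complete freedom'' in choosing $e(t),h(t)$ overshoots --- the profiles must still satisfy gap conditions of the type \eqref{helicity-0}--\eqref{helicity-1} so that the energy and cross-helicity deficits stay strictly positive and of size $\sim\delta_2$ throughout the iteration (and the profiles are only attained on $[1,T]$, not matched at $t=0$) --- but infinitely many such admissible profiles exist, which is all the theorem asserts.
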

\begin{rem}
 Theorem \ref{ideal mhd} shows that all initial data in $H^{\bar{\beta}}~(\forall\bar{\beta}>0)$ may generate non unique  weak solutions by choosing different total energy $e(t)$ or cross helicity $h(t)$. {For weak solutions with non-conservative magnetic helicity, one can see \cite{37Faraco,2Beekie,lyc} for more details.} 
\end{rem}

As a matter of fact, authors in \cite{2Beekie} constructed solutions in $C_tH^{\epsilon}_x\hookrightarrow C_tL^2_x$ which breaks the conservative law of magnetic helicity.  In view of Taylor's conjecture, these weak solutions cannot be the weak ideal limits of Leray-Hopf  weak solutions. Mathematically, Taylor's conjecture is stated as follows:
\begin{thm}[Taylor's conjecture\cite{35Faraco,65Taylor}]\label{Taylor}
Suppose that $(v,b)\in L^{\infty}([0,T];L^2(\mathbb T^3))$ is a weak ideal limit of sequence of Leray-Holf weak solutions of the viscous and resistive MHD system, then the magnetic helicity is conservative.
\end{thm}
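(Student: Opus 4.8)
The plan is to combine the uniform bounds coming from the energy inequality for Leray--Hopf solutions with the \emph{exact} balance law for the magnetic helicity and with the fact that, unlike the total energy or the cross helicity, the magnetic helicity is sequentially continuous along sequences converging weakly in $L^2(\mathbb{T}^3)$. Write $(v_n,b_n)$ for a sequence of Leray--Hopf solutions of \eqref{mhd} with viscosities $\nu_{1,n},\nu_{2,n}\to 0$ whose weak ideal limit is $(v,b)$, and let $A_n\coloneq\curl(-\Delta)^{-1}b_n$ be the Coulomb-gauge vector potential on $\mathbb{T}^3$, so that $\curl A_n=b_n$, $\div A_n=0$, and $A_n$ has zero mean. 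The energy inequality gives, uniformly in $n$,
$$\sup_{t\in[0,T]}\big(\|v_n(t)\|_{L^2}^2+\|b_n(t)\|_{L^2}^2\big)+\nu_{1,n}\int_0^T\|\nabla v_n\|_{L^2}^2\dd t+\nu_{2,n}\int_0^T\|\nabla b_n\|_{L^2}^2\dd t\le C,$$
since the initial data converge weakly in $L^2$ and hence are bounded there. Testing the induction equation against $A_n$, and using $-\Delta A_n=\curl b_n$ together with $\int_{\mathbb{T}^3}b_n\cdot(v_n\times b_n)\dd x=0$, one obtains the balance law $\frac{\dd}{\dd t}\int_{\mathbb{T}^3}A_n\cdot b_n\dd x=-2\nu_{2,n}\int_{\mathbb{T}^3}\curl b_n\cdot b_n\dd x$, which is rigorous for Leray--Hopf solutions thanks to the $L^2_t\dot H^1_x$ regularity of $b_n$ (cf.\ \cite{35Faraco,1Hydrodynamic}). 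Hence, writing $h_n(t)\coloneq\int_{\mathbb{T}^3}A_n(t)\cdot b_n(t)\dd x$,
$$\sup_{t\in[0,T]}|h_n(t)-h_n(0)|\le 2\nu_{2,n}\int_0^T\|\nabla b_n\|_{L^2}\|b_n\|_{L^2}\dd t\le 2C\,T^{1/2}\,\nu_{2,n}^{1/2}\longrightarrow 0.$$

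Next I would extract pointwise-in-time weak limits. By the uniform $L^2$ bound, up to a subsequence $b_n\rightharpoonup b$ weak-$*$ in $L^\infty([0,T];L^2)$; using the equation, $\partial_t b_n$ is uniformly bounded in, say, $L^{5/3}([0,T];H^{-1}(\mathbb{T}^3))$, so an Aubin--Lions/Arzel\`{a}--Ascoli argument gives $b_n\to b$ in $C([0,T];H^{-1}(\mathbb{T}^3))$, and together with the $L^2$ bound this forces $b_n(t)\rightharpoonup b(t)$ in $L^2(\mathbb{T}^3)$ for \emph{every} $t\in[0,T]$ (in particular at $t=0$, where the limit is the prescribed initial datum). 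Now comes the point that is special to magnetic helicity: since $\curl(-\Delta)^{-1}$ is smoothing of order one, $A_n(t)=\curl(-\Delta)^{-1}b_n(t)$ is bounded in $H^1(\mathbb{T}^3)$ with $A_n(t)\rightharpoonup A(t)\coloneq\curl(-\Delta)^{-1}b(t)$ weakly in $H^1$, and by the compact embedding $H^1(\mathbb{T}^3)\hookrightarrow L^2(\mathbb{T}^3)$ this convergence is strong in $L^2$. Pairing the strongly convergent factor $A_n(t)$ with the weakly convergent factor $b_n(t)$ yields $h_n(t)=\langle A_n(t),b_n(t)\rangle\to\langle A(t),b(t)\rangle=h(t)$ for every $t\in[0,T]$.

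Combining the two steps, for every $t\in[0,T]$ one gets $h(t)-h(0)=\lim_{n\to\infty}\big(h_n(t)-h_n(0)\big)=0$, so the magnetic helicity of the weak ideal limit $(v,b)$ is constant in time, which is the assertion. I expect the main obstacle to be the rigorous justification of the helicity balance law in the first step: one must rule out anomalous helicity dissipation, i.e.\ show that the full time-derivative defect of $h_n$ is exactly $-2\nu_{2,n}\int_{\mathbb{T}^3}\curl b_n\cdot b_n\dd x$, which requires a mollification/commutator argument exploiting $b_n\in L^2_t\dot H^1_x$ (classical but not entirely trivial). The second, more conceptual, subtlety — and precisely the reason magnetic helicity behaves differently from the total energy and cross helicity studied in the rest of this paper — is the gain of one derivative in $\curl^{-1}$ combined with Rellich compactness used in the second step; without it the quadratic form $\langle A_n,b_n\rangle$ would only be semicontinuous and conservation could fail in the limit.
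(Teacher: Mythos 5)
The paper does not prove this theorem: it is quoted verbatim as a known result of Faraco--Lindberg \cite{35Faraco} (resolving Taylor's conjecture \cite{65Taylor}), so there is no in-paper proof to compare against. Your argument is, in substance, exactly the Faraco--Lindberg proof --- (i) the exact helicity balance for Leray--Hopf solutions plus the energy inequality gives $\sup_t|h_n(t)-h_n(0)|\lesssim \nu_{2,n}^{1/2}\to 0$, and (ii) the one-derivative gain of $\curl(-\Delta)^{-1}$ plus Rellich turns the quadratic form $\langle A_n(t),b_n(t)\rangle$ into a weak-strong pairing that passes to the limit --- and it is correct, with the technical caveat you already flag (justifying the balance law via mollification). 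The only slip is cosmetic: $\div(v_n\otimes b_n)$ is uniformly bounded in $L^{5/3}_tW^{-1,5/3}_x\hookrightarrow L^{5/3}_tH^{-1-3/10}_x$ rather than $L^{5/3}_tH^{-1}_x$, but the Aubin--Lions/Arzel\`a--Ascoli step only needs some fixed negative Sobolev space, so the conclusion is unaffected.
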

Fortunately, combining Theorem \ref{t:main} with Theorem \ref{ideal mhd}, we can prove that the weak solutions constructed in \cite{2Beekie} can be a vanishing viscosity and resistivity limit of the weak solutions to \eqref{mhd}, which is similar to Theorem 1.3 in \cite{13Nonuniqueness}.
\begin{cor}\label{vanishing}
Suppose that $(v,b)\in C([0,T];H^{\epsilon}(\mathbb T^3))$ is a weak solution of \eqref{idealmhd}. Then, there exist $0<\epsilon'\ll\epsilon$  and a
sequence of weak solutions $(v^{\nu_n},b^{\nu_n})\in C([0,T];H^{\epsilon'}(\mathbb T^3))$ to  the viscous and resistive MHD system
such that,
$$(v^{\nu_n},b^{\nu_n})\rightarrow (v,b)~~strongly~~ in~~ C_tL^2_x,~ ~as~~ \nu_n\rightarrow 0,$$
where $\nu_n=(\nu_{1,n},\nu_{2,n})$.
\end{cor}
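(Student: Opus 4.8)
The plan is to combine the two main constructions already available. Given a weak solution $(v,b)\in C([0,T];H^{\epsilon}(\mathbb T^3))$ of the ideal MHD system \eqref{idealmhd}, its initial datum $(\vin,\bin):=(v(0,\cdot),b(0,\cdot))$ lies in $H^\epsilon(\mathbb T^3)$, so in particular $\bar\beta=\epsilon>0$ is admissible in Theorem \ref{t:main}. Associated with $(v,b)$ are its (possibly non-conserved) total energy $e(t)=\int_{\mathbb T^3}(|v|^2+|b|^2)\dd x$ and cross helicity $h(t)=\int_{\mathbb T^3}v\cdot b\,\dd x$, both of which are continuous (indeed the remark after Theorem \ref{t:main} notes the $C_tH^\epsilon$ regularity, so these are smooth enough after a mollification argument — or one simply works with the slightly perturbed smooth profiles $e_\eta,h_\eta$ and sends $\eta\to0$ at the end). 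The idea is: for each $n$, apply the construction of Theorem \ref{t:main} with viscosities $\nu_n=(\nu_{1,n},\nu_{2,n})\to 0$, the \emph{same} initial data $(\vin,\bin)$, and prescribed energy/helicity profiles chosen to match $e(t),h(t)$ on $[1,T]$, thereby producing viscous solutions $(v^{\nu_n},b^{\nu_n})$ whose macroscopic invariants agree with those of $(v,b)$. One must first check that $e(t),h(t)$ (or their mollifications) satisfy the quantitative bounds \eqref{helicity-0}–\eqref{helicity-1} for a suitable $\delta_2$: since $e(t)-\int(|\vin|^2+|\bin|^2)$ and $h(t)-\int\vin\cdot\bin$ are fixed continuous functions vanishing at $t=0$, one instead shifts to a comparison on a sub-interval or adds an arbitrarily small correction; more cleanly, the proof of Theorem \ref{t:main} is robust enough (cf. Theorem \ref{ideal mhd}) that any smooth $e(t),h(t)$ can be prescribed, so I would invoke that flexibility directly.

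The second and essential point is the \emph{uniformity in $\nu_n$} of the convex integration scheme. I would revisit the iteration of Theorem \ref{t:main} and verify that all inductive estimates — the $C_tL^2_x$ bounds on $(v_q,b_q)$, the decay of the stress errors $(\RR_q,\MM_q)$, and the size of the corrections $w_{q+1}$ — hold with constants independent of $\nu_1,\nu_2\in(0,1]$. The dissipative terms $-\nu_i\Delta$ enter the Reynolds/magnetic stress only through linear error terms of the form $\nu_i\Delta v_q$, $\nu_i\Delta b_q$; because the perturbations $w_{q+1}$ are built at frequency $\lambda_{q+1}$ with amplitude $\delta_{q+1}^{1/2}$, these contribute $\nu_i\lambda_{q+1}^{?}\delta_{q+1}^{1/2}$-type bounds which are controlled, uniformly for $\nu_i\le1$, provided the frequency/amplitude parameters are chosen as in the paper (the worst case $\nu_i=1$ is exactly the viscous-MHD estimate already carried out). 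Hence the limiting solutions $(v^{\nu_n},b^{\nu_n})$ are constructed by the \emph{same} parameter choices for every $n$, and in particular lie in $C([0,T];H^{\epsilon'}(\mathbb T^3))$ with a single $\epsilon'\ll\epsilon$ and a uniform bound.

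Finally, I would extract the strong $C_tL^2_x$ convergence. Each $(v^{\nu_n},b^{\nu_n})$ is obtained as a limit $q\to\infty$ of the $q$-th iterate $(v_q^{\nu_n},b_q^{\nu_n})$, and the zeroth iterate $(v_0,b_0)$ can be taken to depend on $\nu_n$ only through a mollification-in-time parameter tied to the regularity intervals; by the uniform estimates, $\|(v^{\nu_n},b^{\nu_n})-(v_0^{\nu_n},b_0^{\nu_n})\|_{C_tL^2_x}\lesssim\sum_{q\ge1}\|w_q\|_{C_tL^2_x}$ is $o(1)$ uniformly, and one arranges that $(v_0^{\nu_n},b_0^{\nu_n})\to(v,b)$ in $C_tL^2_x$ (this is where the matching of initial data, energy and helicity is used, together with the fact that the ideal solution $(v,b)$ itself serves as the common target). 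A diagonal argument over the iteration index then yields $(v^{\nu_n},b^{\nu_n})\to(v,b)$ strongly in $C_tL^2_x$ as $\nu_n\to0$. The main obstacle I anticipate is precisely verifying the $\nu$-uniformity of every constant in the iteration — in a typical convex integration proof many constants are allowed to depend on the equation's coefficients, so one has to go through the linear-error and transport estimates carefully to confirm that none of them degenerates as $\nu_i\downarrow0$ (they should not, since small viscosity only makes the dissipative error smaller); a secondary technical nuisance is handling the prescribed profiles $e(t),h(t)$ of a \emph{given} ideal solution rather than freely chosen ones, which is resolved by the robustness already built into Theorems \ref{t:main} and \ref{ideal mhd}.
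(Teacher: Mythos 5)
The paper does not spell out a proof of Corollary~\ref{vanishing}; it only points to the argument of Theorem~1.3 in \cite{13Nonuniqueness}. That argument is: since $-\nu_1\Delta v=\div(-\nu_1\mathcal R\Delta v)$ and $-\nu_2\Delta b=\div(-\nu_2\mathcal R_a\Delta b)$, the given ideal solution $(v,b)$ (suitably mollified at a scale tied to $\nu_n$) is itself a solution of the viscous MHD--Reynolds system \eqref{e:subsol-euler} with stresses whose $L^1$ norms tend to $0$ as $\nu_n\to0$. One therefore chooses a starting index $q_0=q_0(\nu_n)\to\infty$ so that these stresses satisfy \eqref{e:RR_q-C0} at level $q_0$, takes $(v_{q_0},b_{q_0})$ to be (essentially) $(v,b)$, and runs Proposition~\ref{p:main-prop} from stage $q_0$ onward. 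The total perturbation added is then $\lesssim\sum_{q\ge q_0(\nu_n)}\delta_{q+1}^{1/2}\to0$, which is exactly what produces the strong $C_tL^2_x$ convergence $(v^{\nu_n},b^{\nu_n})\to(v,b)$.

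Your proposal takes a genuinely different route and it has a fatal gap at the convergence step. Running the scheme of Theorem~\ref{t:main} from $q=1$ with the same initial data and with prescribed profiles $e(t),h(t)$ equal to those of $(v,b)$ only forces the scalar quantities $\int(|v^{\nu_n}|^2+|b^{\nu_n}|^2)\dd x$ and $\int v^{\nu_n}\cdot b^{\nu_n}\dd x$ to agree with those of $(v,b)$; this says nothing about $\|v^{\nu_n}-v\|_{L^2}$. Indeed, the entire content of Theorem~\ref{t:main} is that there are \emph{many} mutually distant solutions sharing identical initial data, energy and cross helicity, so "matching the invariants" cannot single out $(v,b)$ as the limit. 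Your attempted repair — "$\|(v^{\nu_n},b^{\nu_n})-(v_0^{\nu_n},b_0^{\nu_n})\|_{C_tL^2_x}\lesssim\sum_{q\ge1}\|w_q\|_{C_tL^2_x}$ is $o(1)$ uniformly" — is false: by \eqref{e:velocity-diff} this sum is of order $\sum_{q}\delta_{q+1}^{1/2}$, a fixed constant comparable to the full energy of the solution, not a vanishing quantity. The only mechanism that makes the total perturbation small is starting the iteration at a later and later stage as $\nu_n\downarrow0$, and that is only possible because the base of the iteration is the ideal solution itself with stress of size $O(\nu_n)$ (plus mollification errors controlled by the $H^\epsilon$ regularity). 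Your observation that the scheme's constants are uniform in $\nu_i\in(0,1]$ is correct but beside the point; the missing idea is the $\nu$-dependent choice of the starting index $q_0(\nu_n)$ together with the identification of the initial Reynolds and magnetic stresses as $-\nu_{1,n}\mathcal R\Delta v$ and $-\nu_{2,n}\mathcal R_a\Delta b$.
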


\section{Outline of the convex integration scheme}
\label{s:outline}

In this paper, it suffices to prove Theorem \ref{t:main} for \eqref{mhd} with $\nu_1,\nu_2>0$. Without loss of generality, we set $\nu_1=\nu_2=1$ .

\subsection{Parameters and the iterative process}
\label{ss:params}
Set $\bar{\beta}<1$. If $(\vin, \bin)$ is sufficiently smooth, we still have $(\vin, \bin)\in H^{1}\subset H^{\bar{\beta}}$.  We choose $b=2^{16\lceil\bar{\beta}^{-1/2}\rceil} $, $\beta=\frac{\bar{\beta}}{b^4}$, $\alpha$ to be a small constant depending on $b,\beta,\bar{\beta}$ such that $0<\alpha\leq\min\{\frac{1}{b^6},\frac{\beta}{b^3}\}$, and $a\in\mathbb{N}^+$ to be a large number depending on $b,\beta,\bar{\beta},\alpha$ and the initial data . 
We define
\begin{alignat}{10}
    \lambda_q &\coloneq  a^{(b^q)} ,&\qquad   \delta_q &\coloneq \lambda_2^{3\beta}\lambda_q^{-2\beta},~~q\in\mathbb{N}^+.
\end{alignat}
For $q=1,2$, $\delta_q$ is a large number which could bound the $L^2$ norm of initial data by choosing $a$ sufficiently large. For~$q\geq3$, $\delta_q$ is small and tends to zero as $q\rightarrow\infty$.

Firstly, we choose two smooth functions $e: [1/2,T]\to [0,\infty)$, $h:[1/2,T]\to (-\infty,\infty)$ such that
\begin{align}
\frac{\delta_{2} }{2} &\le e(t) -\int_{\mathbb T^3}( |\vin|^2+|\bin|^2) \dd x \le \frac{3\delta_{2} }{4} ,
    \label{e:vin-energy-estimate} \\
    \frac{\delta_{2} }{200} &\le h(t) -\int_{\mathbb T^3} \vin\cdot\bin \dd x \le \frac{\delta_{2} }{150}.
    \label{e:bin-helicity-estimate}
\end{align}

Secondly, adopting strategy of convex integration scheme, we consider a modification of \eqref{mhd} with stress tensor error $(\RR_q,\MM_q)$. Assume that $\psi_{\epsilon}:=\tfrac{1}{\epsilon}\widetilde{\psi}(\tfrac{x}{\epsilon})$ stands for a sequence of standard mollifiers, where $\widetilde{\psi}$ is a non-negative radial bump function. Let $(v_q,b_q,p_q,\RR_q,\MM_q)$ solve
\begin{equation}
\left\{ \begin{alignedat}{-1}
\del_t v_q-\Delta v_q+\div (v_q\otimes v_q)  +\nabla p_q   &=\div (b_q\otimes b_q) + \div \RR_q,
 \\
 \del_t b_q-\Delta b_q+\div (v_q\otimes b_q)     &= \div (b_q\otimes v_q)+ \div \MM_q,
 \\
  \nabla \cdot v_q &= 0,~\nabla \cdot b_q = 0,
  \\ (v_q,b_q) |_{t=0}&=(\vin* \psi_{\ell_{q-1}},\bin* \psi_{\ell_{q-1}}).
\end{alignedat}\right.  \label{e:subsol-euler}
\end{equation}
where $\ell_{q-1}:=\lambda^{-6}_{q-1}$,~ $v \otimes b\coloneq  (v_j b_i)_{i,j=1}^3$, and vector  $\div M$ denotes the divergence of a 2-tensor $M=(M_{ij})_{i,j=1}^3$  with components:
\begin{align*}
(\div M)_i \coloneq   \partial_j M_{ji}.
\end{align*}
In particular, $\div(v\otimes b)=(v\cdot\nabla )b$ if $\div v=0$. The magnetic stress $\MM_q$ is required to be an anti-symmetric matrix. And the Reynolds stress $\RR_q$ is a symmetric, trace-free $3\times3$ matrix.
\begin{gather}
     \MM_q = -\MM_q^\TT,~~~~ \RR_q = \RR_q^\TT ,  ~~~~ \tr\RR_q = \sum_{i=1}^3 (\RR_q)_{ii} =  0. \label{e:RR-cond}
\end{gather}

The estimates we propagate inductively  are:
\begin{align}
    \|(v_q,~b_q)\|_{L^2} &\le C_0\sum_{l=1}^q\delta^{1/2}_l ,
    \label{e:vq-C0}
    \\
    \|(v_q,~b_q)\|_{H^3} &\le  \lambda^5_q ,
    \label{e:vq-C1}
    \\
    \|(\RR_q,~\MM_q)\|_{L^1} &\le \delta_{q+1}\lambda_q^{-40\alpha} ,
    \label{e:RR_q-C0}
\\
(v_q(0,x),~b_q(0,x))&=(\vin * \psi_{\ell_{q-1}},~\bin* \psi_{\ell_{q-1}}),
    \label{e:RR_q-C1}
    \\
   t\in[ 1-\tau_{q-1},T] \implies \tfrac{1}{3}\delta_{q+1} &\le e(t) -\int_{\mathbb T^3} |v_q|^2+|b_q|^2 \dd x \le \delta_{q+1} ,
    \label{e:energy-q-estimate}
    \\
    t\in[ 1-\tau_{q-1},T] \implies \tfrac{\delta_{q+1}}{300} &\le h(t) -\int_{\mathbb T^3}  v_q\cdot b_q\dd x \le \tfrac{\delta_{q+1}}{100} ,
    \label{e:energy-h-estimate}
\end{align}
where ${\ell_q} \coloneq  \lambda_q^{-6}$, $\tau_q \coloneq  {\ell}^3_q$ and $C_0\coloneq 600$. By the definition of $\delta_q$, one can easily deduce that $\sum_{i=1}^{\infty}\delta_i$ converges to a finite number. Moreover, we restrict the error of the cross helicity to be much  smaller than the energy error {in the iterative procedure}, which is used to reduce the impact on the energy error, see Section 4.4.

\begin{prop}
\label{p:main-prop}Let $(\vin,\bin)\in H^{\bar{\beta}}(\mathbb T^3)$ with $0<\bar{\beta}<1$.
Assume that $(v_q,b_q,p_q,\MM_q,\RR_q)$ solves
\eqref{e:subsol-euler} and satisfies \eqref{e:vq-C0}--\eqref{e:RR_q-C1}, and $e(t),h(t)$ are any smooth functions satisfying \eqref{e:energy-q-estimate}-\eqref{e:energy-h-estimate},
then there exists a solution $(v_{q+1},b_{q+1}, p_{q+1}, \RR_{q+1}, \MM_{q+1})$, satisfying \eqref{e:subsol-euler},
\eqref{e:vq-C0}--\eqref{e:energy-h-estimate}
with $q$ replaced by $q+1$, and such that
\begin{align}
        \|(v_{q+1} - v_q,~b_{q+1} - b_q)\|_{L^2} &\leq C_0 \delta_{q+1}^{1/2}.
        \label{e:velocity-diff}
\end{align}
\end{prop}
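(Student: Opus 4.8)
The plan is to carry out one step of the Nash--Newton convex integration iteration, following the intermittent $L^2_x$-based scheme of \cite{13Nonuniqueness,2Beekie,1Cheskidov}, but with the new ingredient of ``box flows'' in place of Mikado flows and the seven-flow decomposition advertised in the introduction. First I would \emph{mollify} the data: set $\ell_q=\lambda_q^{-6}$ and replace $(v_q,b_q,\RR_q,\MM_q)$ by their spatial mollifications $(v_\ell,b_\ell,\RR_\ell,\MM_\ell)$ at scale $\ell_q$, plus a small time regularization, so that the new field has $C^N_x$ estimates controlled by powers of $\lambda_q$ (using \eqref{e:vq-C1} and \eqref{e:RR_q-C0}) while commutator errors from mollification are of size $\ll\delta_{q+2}$. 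Because the data satisfies the actual MHD system \eqref{mhd} rather than Euler, the Laplacian terms $-\Delta v_q,-\Delta b_q$ must be absorbed: I would track a \emph{linear error} $R^{\mathrm{lin}}_{q+1}$ coming from $\mathcal{R}\big((\del_t-\Delta)w_{q+1}\big)$, exactly as in the Navier--Stokes scheme, and check it is subcritical given the parameter choices $\beta=\bar\beta/b^4$, $\alpha\le\min\{b^{-6},\beta/b^3\}$; this is where the dissipation ``prevents the nonlinear term from balancing the stress'' and forces the box-flow construction.

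Next I would \emph{build the perturbation} $w_{q+1}=v_{q+1}-v_\ell$ and $d_{q+1}=b_{q+1}-b_\ell$ as a sum of seven pieces: principal box flows $(\wpqu,\wpqb)$ designed so that their quadratic self-interactions and cross-interactions cancel $\RR_\ell$ and $\MM_\ell$ after averaging (using a geometric/bilinear lemma expressing a symmetric trace-free matrix, resp.\ an antisymmetric matrix, as a combination of the relevant tensor products of box-flow profiles); incompressibility correctors $(\wcq,\dcq)$; temporal flows $(\wtq,\dtq)$ correcting the time derivative of the low-frequency amplitude coefficients (à la \cite{1Cheskidov}); ``inverse traveling wave flows'' correcting the extra oscillation errors caused by the box flows having too few oscillation directions; and finally ``initial flows'' and ``helicity flows'' to enforce \eqref{e:RR_q-C1} (i.e.\ $(v_{q+1},b_{q+1})|_{t=0}=(\vin*\psi_{\ell_q},\bin*\psi_{\ell_q})$) and to adjust the energy and cross helicity so that \eqref{e:energy-q-estimate}--\eqref{e:energy-h-estimate} hold at level $q+1$. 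The perturbation should obey $\|w_{q+1}\|_{L^2}+\|d_{q+1}\|_{L^2}\lesssim\delta_{q+1}^{1/2}$ with constant strictly below $C_0$, giving \eqref{e:velocity-diff} and, telescoping with \eqref{e:vq-C0}, the new $L^2$ bound; meanwhile the high frequency $\lambda_{q+1}$ and a small intermittency dimension yield $\|w_{q+1}\|_{L^p}$ and $\|w_{q+1}\|_{H^3}\le\lambda_{q+1}^5$ after the gain from oscillation, which is \eqref{e:vq-C1}.

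Then I would \emph{define the new stresses}. Plugging $(v_{q+1},b_{q+1})$ into \eqref{e:subsol-euler} produces $\div\RR_{q+1}$ and $\div\MM_{q+1}$ equal to a sum of: the oscillation error $R^{\mathrm{osc}}_{q+1}$ (what is left of the quadratic interactions after the principal cancellation, including the self-interactions of box flows along their non-oscillatory directions, killed by the inverse traveling wave flows), the transport/Nash error $R^{\mathrm{trans}}_{q+1}$, the linear (dissipation) error $R^{\mathrm{lin}}_{q+1}$, and the mollification commutators; I then apply the anti-divergence operator $\mathcal{R}$ (and its antisymmetric analogue for $\MM$) together with a stationary-phase / improved-Hölder estimate $\|\mathcal{R}(\text{oscillatory})\|_{L^1}\lesssim \lambda_{q+1}^{-1}\times(\text{amplitude})$ to get $\|(\RR_{q+1},\MM_{q+1})\|_{L^1}\le\delta_{q+2}\lambda_{q+1}^{-40\alpha}$, which is \eqref{e:RR_q-C0} at level $q+1$. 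The energy/helicity identities \eqref{e:energy-q-estimate}--\eqref{e:energy-h-estimate} are closed by computing $\int |w_{q+1}|^2+|d_{q+1}|^2$ and $\int w_{q+1}\cdot d_{q+1}$: the principal-flow $L^2$ masses are prescribed pointwise-in-time via the amplitude functions so that they match $e(t)-\int|v_\ell|^2+|b_\ell|^2$ and $h(t)-\int v_\ell\cdot b_\ell$ up to errors $\ll\delta_{q+2}$, with the helicity flows providing the extra degree of freedom to decouple the energy and cross-helicity adjustments (this is why the helicity error is kept an order of magnitude smaller than the energy error in the inductive bounds). I expect the main obstacle to be precisely the bookkeeping of the \emph{oscillation error for box flows}: because 3D box-type flows lack the three independent oscillation directions that Mikado flows enjoy, the self-interaction terms do not automatically become high-frequency, and one must introduce and estimate the inverse traveling wave / temporal corrections so that every term either cancels or lands in $\mathcal{R}$ of a genuinely oscillatory object; simultaneously keeping the $H^3$ bound \eqref{e:vq-C1} and the $L^2$ constant under $C_0$ while these auxiliary flows are added is the delicate quantitative heart of the proof.
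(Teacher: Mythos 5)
Your outline captures the broad architecture of the paper's proof (mollification, a multi-component perturbation built from box flows, inverse-divergence estimates for the new stresses, and closure of the energy/helicity identities through the amplitude functions), but two concrete ingredients of the actual argument are missing, and without them the step does not close as written. First, the paper does not pass directly from the mollified tuple to the perturbation: between these steps it inserts a \emph{gluing} stage (Sections 3.2--3.3), solving the exact viscous MHD system on overlapping intervals of length $\sim\tau_q=\ell_q^3$ from data $v_{\ell_q}(t_l)$ and patching with a partition of unity $\chi_l$, so that the resulting stresses $(\RRR_q,\MMM_q)$ vanish on the intervals $J_l$. This is not cosmetic: the vanishing of the stress on $J_l$ is what makes the squiggling cutoffs $\tilde\eta_l$ and the straight cutoffs $\bar\eta_l$ usable, which in turn is how the paper simultaneously pins the initial data exactly (perturbation vanishing at $t=0$) and pumps in the prescribed energy and cross helicity only for $t\ge 1-\tau_q$; your proposal prescribes the amplitudes ``pointwise in time'' in the style of \cite{13Nonuniqueness}, which by itself cannot reconcile the constraint $(v_{q+1},b_{q+1})|_{t=0}=(\vin*\psi_{\ell_q},\bin*\psi_{\ell_q})$ with the requirement that the energy gap be strictly positive on $[1-\tau_q,T]$, since near $t=0$ there is no room to add energy.

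Second, you list only six of the seven flows and, more importantly, your treatment of the dissipative term is too optimistic. The paper's linear error is \emph{not} simply $\mathcal R\big((\del_t-\Delta)w_{q+1}\big)$ shown to be subcritical term by term: for the inverse traveling wave flows the quantities $\mathcal R\,\del_t\wvq$ and $\mathcal R\,\Delta\wvq$ are not individually small, and the scheme only closes because the \emph{heat conduction flows} $(\wlq,\dlq)$, defined through a Duhamel integral $\int_0^t e^{(t-\tau)\Delta}(\cdots)\,\dd\tau$, are added so that the combination $\del_t\wlq-\Delta\wlq-\Delta\wvq$ collapses (up to commutators with the slow amplitudes $a^2_{b,l,k}$) to a genuinely high-frequency divergence that cancels inside $\Rosc$ and $\Mosc$ (Propositions 5.1--5.3). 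This cancellation structure is precisely the new difficulty created by the viscosity and resistivity that the paper advertises; identifying the difficulty, as you do, is not the same as resolving it, and the proposal would stall at the estimate of $\mathcal R\Delta\wvq$ without this seventh flow.
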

\textbf{Notations:} Throughout this paper,  we set that
\begin{align*}
v\mathring\otimes b:=v\otimes b-\tfrac{1}{3}\tr (v \otimes b){\rm Id},~~~~\mathbb{P}_{H}:={\rm Id}-\frac{\nabla\div}{\Delta},~~~~{\mathbb{P}_{>0}}f(x)=f(x)-\int_{\mathbb{T}^3}f(z)\dd z,
\end{align*}
where $v\mathring\otimes b$ is a trace-free matrix and $\frac{f}{\Delta}(z):=\sum_{l\in\mathbb{Z}^3/\{0\}}\frac{\hat{f}_l}{-l^2} e^{i l\cdot z}$ for any mean free function $f$. We have $\frac{\partial_jf}{\Delta}(z)=\sum_{l\in\mathbb{Z}^3/\{0\}}\frac{il_j\hat{f}_l}{-l^2} e^{i l\cdot z}$.

Next, we prove that Proposition \ref{p:main-prop} implies Theorem~\ref{t:main}. To start the iteration, we define $(v_1,b_1,p_1,\RR_1,\MM_1)$ by	
\begin{gather*}
    v_1(x,t)\coloneq e^{t\Delta}\vin * \psi_{\ell_{0}},~~b_1(x,t)\coloneq e^{t\Delta}\bin(x)* \psi_{\ell_{0}},~~ p_1(x,t)\coloneq |v_1 |^2-|b_1|^2,\\
    \RR_1(x,t)\coloneq v_1 \mathring\otimes v_1 -b_1 \mathring\otimes b_1,~~
         \MM_1(x,t)\coloneq v_1 \otimes b_1-b_1 \otimes v_1.
\end{gather*}
It is easy to verify that $(v_1,b_1,p_1,\RR_1,\MM_1)$ solves \eqref{e:subsol-euler}. In addition, letting $a, b$ be sufficiently large, we can guarantee  that
\begin{align*}
&\|(\RR_1,\MM_1)\|_{L^1}\le \|(\vin,\bin)\|^2_{L^2}\le \delta_{2}\lambda^{-40\alpha}_1,\\
& \|(e^{t\Delta}\vin,e^{t\Delta}\bin)\|_{L^2} \le\|(\vin,\bin)\|_{L^2}\leq \delta_{2}^{1/2}<\delta^{1/2}_1,\\
& \|(\vin*\psi_{\ell_{0}},\bin*\psi_{\ell_{0}})\|_{H^3}\le \delta_{2}^{1/2}\ell^3_{0} \leq\lambda^5_1.
\end{align*}
For arbitrary smooth functions $e: [1/2,T]\to [0,\infty)$, $h:[1/2,T]\to(-\infty,\infty)$ satisfying the estimates \eqref{e:vin-energy-estimate}-\eqref{e:bin-helicity-estimate}, it is easy to verify that \eqref{e:energy-q-estimate}--\eqref{e:energy-h-estimate} for $q=1$  .

Then, making use of Proposition \ref{p:main-prop} inductively, we obtain a $L^2$ convergent sequence of functions $(v_q,b_q)\to (v,b)$ which solves \eqref{mhd}, with $\| v\|_{L^2}^2+\| b\|_{L^2}^2= e(t)$ and $\int_{\mathbb{T}^3}v\cdot b \dd x= h(t)$ for all $t\in[1,T]$. A standard  argument shows that $(v,b)\in C([0,T];L^2(\mathbb{T}^3))$, see \cite{zbMATH07038033} for more details. Moreover, from \eqref{e:vq-C1} and \eqref{e:velocity-diff}, there exists  $0<\epsilon\ll\beta$ such that $\{(v_q,~b_{q})\}$  is also a Cauchy sequence in $C_tH^{\epsilon}_x$ by interpolation. Thus, we obtain $(v,b)\in C_tH^{\epsilon}_x$.
\hfill $\qedsymbol$

The remainder of the paper is devoted to the proof of Proposition \ref{p:main-prop}.
\subsection{The proof sketch of Proposition \ref{p:main-prop}}
\label{ss:sketch-pf-main-prop}
Starting from a solution $(v_q, b_q, p_q, \RR_q, \MM_q)$ satisfying the estimates as in Proposition \ref{p:main-prop}, the broad scheme of the iteration is as follows.
\begin{enumerate}
    \item We defined $(v_{\ell_q},b_{\ell_q},p_{\ell_q},\RR_{\ell_q},\MM_{\ell_q})$ by mollification, and it is standard in convex integration schemes.
    \item We define a family of exact solutions $(\vex_l, \bex_l)_{l\ge0}$ to MHD by exactly solving the MHD system with initial data $(\vex_l, \bex_l)|_{t=t_l}= (v_{\ell_q}(t_l), b_{\ell_q}(t_l))$, where $t_l=l\tau_q$ defines an evenly spaced paritition of $[0,T]$.
    \item These solutions are glued together by a partition of unity, leading to the tuple $(\vv_q,\bb_q,\bar{p}_q,\RRR_q,\MMM_q)$. The stress error terms are zero when $t\in J_l,~l\geq 0$, see \cite{zbMATH07038033,2018Wild}.
    \item We define $(v_{q+1}, b_{q+1}) = (\vv_q+w_{q+1}, \bb_q+d_{q+1})$ by constructing a perturbation $(w_{q+1},d_{q+1})$. 
    \item Finally, we prove that the inductive estimates \eqref{e:vq-C0}--\eqref{e:energy-h-estimate} hold with $q$ replaced by $q+1$.
    \end{enumerate}
Step 4 is moderately involved and we breaks it into the following sub-steps : 
\begin{enumerate}
    \item For times $t\ge 1$, we use the `squiggling' cutoffs $\eta_l$ from \cite{zbMATH07038033,KMY} that allow energy to be added at such times, even outside the support of $(\RRR_q,\MMM_q)$, while cancelling a large part of $(\RRR_q,\MMM_q)$.
    \item For times $t< 1$, we instead employ the straight cutoffs introduced in \cite{Isett,2018Wild}. This ensures that $(\vv_q,\bb_q)|_{t=0}=(\vin * \psi_{\ell_{q-1}}* \psi_{\ell_{q}}, \bin * \psi_{\ell_{q-1}}* \psi_{\ell_{q}})$.
     \item Then we construct the seven parts of the perturbation by using the ``box flows''.

         $\bullet~$ Principal flows:~ $(\wpq,\dpq)$ plays a role in cancelling  the Reynolds and magnetic stresses $(\RRR_q,\MMM_q)$, while this would lead to some extra errors.

         $\bullet~$ Temporal flows:~ $(\wtq,\dtq)$ is used to cancel the extra errors which stem from $\div(\phi_{\bar{k}}\bar{k})$, where $\phi_{\bar{k}}$ is a traveling-wave.

         $\bullet~$ Inverse traveling wave flows:~ ($\wvq, \dvq$) is used to cancel the extra errors produced by $\div(\phi_{\bar{\bar{k}}}\bar{\bar{k}})$, where $\phi_{\bar{\bar{k}}}$ does not depend on $t$.

          $\bullet~$ Heat conduction flows :~ $(\wlq, \dlq)$ is used to
         cancel the extra errors producing by the inverse traveling wave flows ($\wvq, \dvq$).

         $\bullet~$ Corrector flows:~ $(\wcq,\dcq)$ is introduced to correct principal perturbation $(\wpq,\dpq)$ to enforce the incompressibility condition.

         $\bullet~$ Initial flows:~ $(\wsq,\dsq)$ can ensure that $(v_{q+1},b_{q+1})|_{t=0}=(\vin* \psi_{\ell_{q}}, \bin*\psi_{\ell_{q}})$. It should be noted that the above five types of flows are zero when $t=0$.

         $\bullet~$ Helicity flows:~ $(\whq,\dhq)$ makes the cross helicity satisfy  \eqref{e:energy-h-estimate} at $q+1$ level.
\end{enumerate}
It is noteworthy that the first five flows are enough to produce a weak solution $(v,b)$ for \eqref{mhd}, initial flows and helicity flows are used to control the helicity and show the non-uniqueness.

\section{Preliminary preparation of iteration}
In this section, we provide some preliminary preparation from $(v_q,b_q)$ to $(\vv_q,\bb_q)$, and it is essentially a modification as in \cite{zbMATH07038033,13Nonuniqueness,KMY}. For the sake of completeness, we briefly review relevant results in the process of constructing $(\vv_q,\bb_q)$ and readers can refer to \cite{zbMATH07038033,13Nonuniqueness,KMY} for the more details. {In Section 4, we will construct $(\vv_q,\bb_q)\rightarrow (v_{q+1},b_{q+1})$ to complete the iteration, which is the key ingredient of this paper.}
\label{s:construct}
\subsection{Mollification}

Let ${\ell_q}:= \lambda_q^{-6} $, and we define the functions $v_{\ell_q}, b_{\ell_q}$ and $\RR_{\ell_q}, \MM_{\ell_q}$ as follows:
\begin{align}
    v_{\ell_q} &\coloneq v_q * \psi_{\ell_q},
 &   \RR_{\ell_q} &\coloneq \RR_q * \psi_{\ell_q}  - (v_q \ootimes v_q) * \psi_{\ell_q}  + v_{\ell_q} \ootimes v_{\ell_q}+ (b_q \ootimes b_q) * \psi_{\ell_q}  - b_{\ell_q} \ootimes b_{\ell_q} , \label{e:v_ell}\\
     b_{\ell_q} &\coloneq b_q * \psi_{\ell_q},
 &   \MM_{\ell_q} &\coloneq \MM_q * \psi_{\ell_q}  - (v_q \otimes b_q) * \psi_{\ell_q}  + v_{\ell_q} \otimes b_{\ell_q}+ (b_q \otimes v_q) * \psi_{\ell_q} - b_{\ell_q} \otimes v_{\ell_q}. \label{e:v_ell}
\end{align}
Then, $(v_{\ell_q},b_{\ell_q},p_{\ell_q},\RR_{\ell_q},\MM_{\ell_q})$ satisfies the following equations
\begin{equation}
\left\{ \begin{alignedat}{-1}
\del_t v_{\ell_q}-\Delta v_{\ell_q} +\div (v_{\ell_q}\otimes v_{\ell_q})  +\nabla p_{\ell_q}   &= \div (b_{\ell_q}\otimes b_{\ell_q})+  \div \RR_{\ell_q} ,
\\
\del_t b_{\ell_q}-\Delta b_{\ell_q} +\div (v_{\ell_q}\otimes b_{\ell_q})   &= \div (b_{\ell_q}\otimes v_{\ell_q})+  \div \MM_{\ell_q} ,
\\
  \nabla \cdot v_{\ell_q} = 0,~~\nabla \cdot b_{\ell_q} &= 0,
\\
v_{\ell_q} |_{t=0}= \vin* \psi_{\ell_{q-1}}* \psi_{\ell_{q}} ,~~b_{\ell_q}|_{t=0} &=\bin* \psi_{\ell_{q-1}}* \psi_{\ell_{q}},
\end{alignedat}\right.  \label{e:mollified-euler}
\end{equation}
where $p_{\ell_q} \coloneq p_q *\psi_{\ell_q} -|v_q|^2 + |v_{\ell_q}|^2+|b_q|^2 - |b_{\ell_q}|^2,$ and we have used the identity $\div(fI_{3\times 3}) =\nabla f$ for a scalar field $f$. A simple computation shows the following mollification estimates: 
\begin{prop}[Estimates for mollified functions\cite{zbMATH07038033}]\label{p:estimates-for-mollified}
For any  $N \geq 0$, we have \footnote{Throughout this paper, we use the notation $x\lesssim y$ to denote $x\le Cy$, for a universal constant $C$ that may be different from line to line,  and $x\ll y$ to denote that $x$ is much less than $y$.}
\begin{align}
\|v_{\ell_q}-v_{q}\|_{L^2}+\|b_{\ell_q}-b_{q}\|_{L^2} &\lesssim \delta_{q+1}  \ell^{3\alpha}_{q}, \label{e:v_ell-vq}
\\
\|v_{\ell_q}\|_{H^{N+3}}+\|b_{\ell_q}\|_{H^{N+3}} &\lesssim  \lambda^5_{q}{\ell}^{-N}_q , \label{e:v_ell-CN+1}
\\
\|\RR_{\ell_q}\|_{W^{N,1}}+\|\MM_{\ell_q}\|_{W^{N,1}} &\lesssim \delta_{q+1}\ell^{3\alpha- N}_q  ,\label{e:R_ell}
\\
\Big|\int_{\mathbb{T}^{3}} (| v_{q}|^{2}-\left|v_{\ell_q}\right|^{2} )\dd x \Big|+\Big|\int_{\mathbb{T}^{3}} (| b_{q}|^{2}-\left|b_{\ell_q}\right|^{2}) \dd x \Big| &\lesssim \delta_{q+1} \ell^{3\alpha}_{q},\label{e:energy-v_ell}
\\
\Big|\int_{\mathbb{T}^{3}}
 v_{q}\cdot b_{q} -v_{\ell_q}\cdot b_{\ell_q} \dd x \Big|&\lesssim \delta_{q+1} \ell^{3\alpha}_{q}.\label{e:crossenergy-v_ell}
\end{align}
\end{prop}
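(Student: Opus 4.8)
The plan is to derive all five estimates from the single elementary fact that mollification by $\psi_{\ell_q}$ is bounded on every $W^{N,p}$ with operator norm $O(1)$, while the difference $f*\psi_{\ell_q}-f$ costs one derivative per factor of $\ell_q$, together with the inductive bounds \eqref{e:vq-C0}--\eqref{e:RR_q-C0} already available for $(v_q,b_q,\RR_q,\MM_q)$. Concretely, I would record first the two standard mollification lemmas: $\|f*\psi_{\ell}\|_{W^{N,p}}\lesssim \|f\|_{W^{N,p}}$ and $\|f*\psi_{\ell}-f\|_{L^p}\lesssim \ell^{M}\|f\|_{W^{M,p}}$ for any $M\ge 0$, with constants depending only on $N,M,\widetilde\psi$. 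The inductive hypothesis gives $\|(v_q,b_q)\|_{H^3}\le\lambda_q^5$, and $H^3(\TTT^3)\hookrightarrow L^\infty$, so $v_q,b_q$ are bounded in $L^\infty$ by $\lambda_q^5$ as well; this is what will let me control the quadratic terms $v_q\ootimes v_q$, etc.

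For \eqref{e:v_ell-CN+1} I would simply apply the first lemma with loss: $\|v_{\ell_q}\|_{H^{N+3}}=\|v_q*\psi_{\ell_q}\|_{H^{N+3}}\lesssim \ell_q^{-N}\|v_q\|_{H^3}\lesssim \ell_q^{-N}\lambda_q^5$. For \eqref{e:v_ell-vq}, the naive bound $\|v_q*\psi_{\ell_q}-v_q\|_{L^2}\lesssim \ell_q^3\|v_q\|_{H^3}\lesssim \lambda_q^{-18}\lambda_q^5=\lambda_q^{-13}$ is already far smaller than $\delta_{q+1}\ell_q^{3\alpha}=\delta_{q+1}\lambda_q^{-18\alpha}$ once $a$ is large, since $\delta_{q+1}$ is bounded below by a fixed power of $\lambda_{q+1}^{-2\beta}$ and $\beta,\alpha$ are tiny; I would just check the exponent inequality $13>2\beta b+18\alpha$, which holds by the smallness of $\alpha,\beta$ relative to $b$. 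For \eqref{e:R_ell} I split $\RR_{\ell_q}$ into $\RR_q*\psi_{\ell_q}$, which obeys $\|\RR_q*\psi_{\ell_q}\|_{W^{N,1}}\lesssim \ell_q^{-N}\|\RR_q\|_{L^1}\le \ell_q^{-N}\delta_{q+1}\lambda_q^{-40\alpha}\lesssim \delta_{q+1}\ell_q^{3\alpha-N}$ directly, plus the commutator pieces $v_{\ell_q}\ootimes v_{\ell_q}-(v_q\ootimes v_q)*\psi_{\ell_q}$ (and the $b$ analogue), for which I invoke the standard quadratic commutator estimate (as in Constantin--E--Titi / \cite{zbMATH07038033}): $\|(fg)*\psi_\ell-(f*\psi_\ell)(g*\psi_\ell)\|_{W^{N,1}}\lesssim \ell^{2-N}\|f\|_{H^1}\|g\|_{H^1}$ — actually with two derivatives gained, so $\lesssim \ell_q^{2-N}\lambda_q^{10}$, and again the exponent check $2\cdot 6 - 10 = 2 > 2\beta b + 3\alpha$ closes it. The energy and cross-helicity differences \eqref{e:energy-v_ell}, \eqref{e:crossenergy-v_ell} follow the same pattern: write $\int |v_q|^2-|v_{\ell_q}|^2 = \int (v_q-v_{\ell_q})\cdot(v_q+v_{\ell_q})$, bound by $\|v_q-v_{\ell_q}\|_{L^2}(\|v_q\|_{L^2}+\|v_{\ell_q}\|_{L^2})\lesssim \lambda_q^{-13}\cdot(\text{something}\lesssim\lambda_q^5)$, comfortably below $\delta_{q+1}\ell_q^{3\alpha}$, and identically for $v_q\cdot b_q - v_{\ell_q}\cdot b_{\ell_q}$ after the obvious bilinear splitting.

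The only mildly delicate point — and the one I would present most carefully — is the quadratic commutator estimate for the $\RR_{\ell_q},\MM_{\ell_q}$ terms, since that is where the convex-integration bookkeeping actually lives: one must verify that the gained powers of $\ell_q$ beat the $H^3$-growth of $(v_q,b_q)$ with enough room to absorb the $\ell_q^{3\alpha-N}$ target and leave the constant universal. Everything else is a mechanical application of the two mollification lemmas and the triangle inequality, and I would state it as such, relegating the exponent arithmetic to a single line using $\ell_q=\lambda_q^{-6}$, $\delta_{q+1}=\lambda_2^{3\beta}\lambda_{q+1}^{-2\beta}$ and the choices $\beta=\bar\beta/b^4$, $\alpha\le\beta/b^3$, with $a$ taken large at the end.
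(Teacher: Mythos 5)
Your proposal is correct and follows essentially the same (standard) route as the paper, which does not write out a proof at all but simply cites the mollification and Constantin--E--Titi-type commutator estimates of \cite{zbMATH07038033}; your exponent checks (e.g. $13>2\beta b+18\alpha$ and $2>2\beta b+18\alpha$ after using $\ell_q=\lambda_q^{-6}$) are the right ones and close with ample room. The only cosmetic slip is writing $3\alpha$ where $18\alpha$ is meant in the last exponent count, which does not affect the conclusion.
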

\subsection{Classical exact  flows}
\label{ss:exact}
We define  $\tau_q$ and the  sequence of initial times $t_l$ ($l\in\mathbb N$) by
 \begin{align}
    \tau_q \coloneq  {\ell}^3_q\ll\|(v_{\ell_q},b_{\ell_q})\|^{-1}_{H^{\frac{5}{2}+\alpha}}, \qquad t_l\coloneq l\tau_q,   \label{e:tau_q-and-t_i}
\end{align}
and  $(\vex_l, \bex_l)$ denotes the unique strong solution to the following MHD system on $[t_l,t_{l+2}]$:
\begin{equation}\label{eulervi}
\left\{ \begin{alignedat}{-1}
\del_t \vex_l -\Delta \vex_l+\div (\vex_{l}\otimes \vex_{l})  +\nabla \pex_l   &=  \div (\bex_{l}\otimes \bex_{l}) ,
\\
\del_t \bex_{l}-\Delta \bex_l +\div (\vex_{l}\otimes \bex_{l})   &= \div (\bex_{l}\otimes \vex_{l})  ,
\\
  \div \vex_{l} = 0,~~\div \bex_{l} &= 0,
  \\
  \vex_l|_{t=t_l}= v_{\ell_q}(\cdot,t_l),~~\bex_l|_{t=t_l} &= b_{\ell_q}(\cdot,t_l).
\end{alignedat}\right.
\end{equation}

\begin{prop}[Estimates for classical solutions to MHD \cite{67Wu}]
\label{p:exact-euler}
Let $(v_0,b_0)\in H^{N_0}$ with $N_0\geq 3$, and $\div v_0=\div b_0=0$. Then there exists a unique local solution $(v,b)$ to \eqref{mhd} with $\nu_1=\nu_2=1$ satisfying
\[ \|(v(\cdot,t),b(\cdot,t))\|_{H^{N}} \lesssim  \|(v_0,b_0)\|_{H^{N}} ,~~N\in [\tfrac{5}{2},N_0], \]
where the local lifespan $T= \frac c{\|v_0\|_{H^{5/2+\alpha}}+\|b_0\|_{H^{5/2+\alpha}}}$ for some universal $c>0$.
\end{prop}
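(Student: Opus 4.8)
The plan is to prove this by the classical energy method for the quasilinear parabolic system \eqref{mhd}, the only subtlety being that the MHD coupling forces one to exploit the algebraic structure of the nonlinearity so that no top-order term survives the estimate. First I would eliminate the pressure with the Leray projection $\mathbb{P}_H$ — noting that $\div v=\div b=0$ is propagated by the evolution (and that for the induction equation the nonlinearity $\div(v\otimes b-b\otimes v)$ is already divergence-free, so $\mathbb{P}_H$ acts trivially there) — rewriting \eqref{mhd} with $\nu_1=\nu_2=1$ as
\begin{equation*}
\del_t v-\Delta v+\mathbb{P}_H\div(v\otimes v-b\otimes b)=0,\qquad \del_t b-\Delta b+\mathbb{P}_H\div(v\otimes b-b\otimes v)=0,
\end{equation*}
with $(v,b)|_{t=0}=(v_0,b_0)$, $\div v=\div b=0$.

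For the a priori estimate I would fix $N\in[\tfrac52,N_0]$, apply $\langle D\rangle^{N}$ to both equations, pair in $L^2$ with $\langle D\rangle^{N}v$ and $\langle D\rangle^{N}b$, and add; since $\mathbb{P}_H$ is self-adjoint, bounded on $L^2$, and the identity on solenoidal fields, the pressure drops out, leaving
\begin{equation*}
\tfrac12\dv{}{t}\big(\|v\|_{H^N}^2+\|b\|_{H^N}^2\big)+\|\nabla v\|_{H^N}^2+\|\nabla b\|_{H^N}^2=-\sum(\text{nonlinear terms}).
\end{equation*}
The two transport contributions $\int(v\cdot\nabla)\langle D\rangle^N v\cdot\langle D\rangle^N v$ and $\int(v\cdot\nabla)\langle D\rangle^N b\cdot\langle D\rangle^N b$ vanish by $\div v=0$, and the two surviving top-order coupling terms $\int(b\cdot\nabla)\langle D\rangle^N b\cdot\langle D\rangle^N v$ and $\int(b\cdot\nabla)\langle D\rangle^N v\cdot\langle D\rangle^N b$ cancel after one integration by parts using $\div b=0$ — this is precisely the structure that makes MHD energy-estimable. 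Everything else is a Kato--Ponce commutator $[\langle D\rangle^N,v\cdot\nabla]v$, $[\langle D\rangle^N,v\cdot\nabla]b$, $[\langle D\rangle^N,b\cdot\nabla]v$, $[\langle D\rangle^N,b\cdot\nabla]b$, and combining the commutator estimate with the embedding $H^{5/2+\alpha}\hookrightarrow W^{1,\infty}$ (valid since $\tfrac52+\alpha>\tfrac32+1$) gives
\begin{equation*}
\dv{}{t}\big(\|v\|_{H^N}^2+\|b\|_{H^N}^2\big)\lesssim\big(\|v\|_{H^{5/2+\alpha}}+\|b\|_{H^{5/2+\alpha}}\big)\big(\|v\|_{H^N}^2+\|b\|_{H^N}^2\big).
\end{equation*}
Specializing $N=\tfrac52+\alpha$ yields $\dv{}{t}y\lesssim y^{3/2}$ with $y\coloneq\|v\|_{H^{5/2+\alpha}}^2+\|b\|_{H^{5/2+\alpha}}^2$, so $y(t)\le 4y(0)$ on an interval of length $T\simeq c/(\|v_0\|_{H^{5/2+\alpha}}+\|b_0\|_{H^{5/2+\alpha}})$; reinserting this into the previous inequality for a general $N$ and using $\int_0^T(\|v\|_{H^{5/2+\alpha}}+\|b\|_{H^{5/2+\alpha}})\dd t\lesssim1$, Grönwall gives $\|(v(\cdot,t),b(\cdot,t))\|_{H^N}\lesssim\|(v_0,b_0)\|_{H^N}$ on $[0,T]$, which is the asserted bound and lifespan.

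To actually produce the solution I would run the same estimates on a regularization — a Galerkin truncation, or the system with the nonlinearity mollified — which is globally well posed by ODE/parabolic theory and for which the step above provides bounds uniform in the regularization parameter on $[0,T]$; Aubin--Lions compactness (uniform $H^N$ bound, $\del_t$ controlled in a weaker norm via the equation) extracts a limit solving the projected system, $p$ is recovered from $-\Delta p=\div\div(v\otimes v-b\otimes b)$, and parabolic smoothing promotes weak continuity to $(v,b)\in C([0,T];H^N)$. Uniqueness follows from an $L^2$ estimate on the difference $(w,e)=(v-\tilde v,b-\tilde b)$: terms with a derivative on $w$ or $e$ drop by $\div w=\div e=0$, the pair $\int(\tilde b\cdot\nabla)e\cdot w+\int(\tilde b\cdot\nabla)w\cdot e$ cancels by $\div\tilde b=0$, and the remainder is $\lesssim(\|(v,b)\|_{W^{1,\infty}}+\|(\tilde v,\tilde b)\|_{W^{1,\infty}})(\|w\|_{L^2}^2+\|e\|_{L^2}^2)$, so Grönwall from zero data forces $w\equiv e\equiv0$.

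I expect the only point requiring real care to be the a priori estimate with the \emph{sharp} lifespan $T\simeq\|(v_0,b_0)\|_{H^{5/2+\alpha}}^{-1}$: this forces both of the structural cancellations above — without them a genuinely top-order term would remain — and requires placing every $L^\infty$ factor on the lowest admissible norm $H^{5/2+\alpha}$, the exponent $\tfrac52+\alpha$ being exactly the threshold at which $\nabla v,\nabla b\in L^\infty$ and the Kato--Ponce commutators close. The remaining steps are routine; for the fully detailed argument see \cite{67Wu}.
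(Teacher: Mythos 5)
The paper does not prove this proposition at all---it is quoted as a known result with a citation to \cite{67Wu}---so there is no in-paper argument to compare against. Your proof is the standard energy-method argument and is correct: the two structural cancellations (the transport terms via $\div v=0$ and the paired coupling terms $\int(b\cdot\nabla)\langle D\rangle^N b\cdot\langle D\rangle^N v+\int(b\cdot\nabla)\langle D\rangle^N v\cdot\langle D\rangle^N b$ via $\div b=0$), the Kato--Ponce closure with every $L^\infty$ factor placed on $H^{5/2+\alpha}\hookrightarrow W^{1,\infty}$, and the Riccati inequality at $N=\tfrac52+\alpha$ giving the lifespan $T\simeq\|(v_0,b_0)\|_{H^{5/2+\alpha}}^{-1}$ are exactly the points that make the stated bound and lifespan come out.
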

According to Proposition \ref{p:exact-euler},  the solvability of the Cauchy problem
 \eqref{eulervi} on $[t_{l},t_{l+2}]$ can be stated as:
\begin{cor}\label{estimate-0}
System \eqref{eulervi} possesses a unique local solution $(\vex_l, \bex_l)$ in $[t_l,t_{l+2}]$ such that
\begin{align}
&\|(\vex_l(\cdot,t), \bex_l(\cdot,t))\|_{L^{2}}\lesssim \|(v_{\ell_q},b_{\ell_q})\|_{L^{2}},\label{e:stability-1}\\
&\|(\vex_l(\cdot,t), \bex_l(\cdot,t))\|_{H^{N+3}}\lesssim \|(v_{\ell_q},b_{\ell_q})\|_{H^{N+3}}, \quad\quad\quad \forall N \geq 0,\label{e:stability-2}\\
   & \|(\vex_l -v_{\ell_q}, \bex_l -b_{\ell_q})\|_{H^{N}} \lesssim \tau_q \delta_{q+1} \ell_q^{-N-5/2+\alpha}, \quad~~~ \forall N \geq 0,\label{e:stability-3}
 \end{align}
 where $(\vex_l -v_{\ell_q},\bex_l -b_{\ell_q})$ has zero mean.
\end{cor}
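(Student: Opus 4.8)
\textbf{Proof plan for Corollary \ref{estimate-0}.}

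The statement follows by combining Proposition \ref{p:exact-euler} with the choice of timescale $\tau_q$ in \eqref{e:tau_q-and-t_i}. First I would verify that the local lifespan provided by Proposition \ref{p:exact-euler} for the initial datum $(v_{\ell_q}(\cdot,t_l), b_{\ell_q}(\cdot,t_l))$ actually covers the interval $[t_l,t_{l+2}]$, i.e. is at least $2\tau_q$. By \eqref{e:v_ell-CN+1} we have $\|(v_{\ell_q},b_{\ell_q})\|_{H^{5/2+\alpha}}\lesssim \lambda_q^5 \ell_q^{-1/2-\alpha+\ldots}$ (using $H^{5/2+\alpha}\subset H^3$ with the mollification loss), which is a fixed power of $\lambda_q$; since $\tau_q=\ell_q^3=\lambda_q^{-18}$ is a much higher negative power, the defining inequality $\tau_q\ll\|(v_{\ell_q},b_{\ell_q})\|^{-1}_{H^{5/2+\alpha}}$ holds for $a$ large, so the strong solution exists and is unique on $[t_l,t_{l+2}]$ by Proposition \ref{p:exact-euler}. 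This also gives \eqref{e:stability-1} and \eqref{e:stability-2} directly, since the proposition propagates $H^N$ bounds (for $N\in[5/2,N_0]$, in particular $N+3$) and $L^2$ is controlled by interpolation or by the basic energy inequality for MHD.

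For the difference estimate \eqref{e:stability-3}, I would set $V\coloneq \vex_l-v_{\ell_q}$, $D\coloneq \bex_l-b_{\ell_q}$ and write the equation they satisfy by subtracting \eqref{e:mollified-euler} from \eqref{eulervi}. The key point is that the forcing in this difference equation is exactly $-\div\RR_{\ell_q}$, $-\div\MM_{\ell_q}$ (up to a pressure gradient), plus terms quadratic in $(V,D)$ and linear terms with smooth coefficients $(v_{\ell_q},b_{\ell_q})$. Since $(V,D)|_{t=t_l}=0$, a Grönwall/energy argument in $H^N$ on the short interval $[t_l,t_{l+2}]$ of length $2\tau_q$ gives
\begin{align*}
\|(V,D)(\cdot,t)\|_{H^N} \lesssim \tau_q \sup_{s}\big(\|\div\RR_{\ell_q}\|_{H^N}+\|\div\MM_{\ell_q}\|_{H^N}\big) \lesssim \tau_q\,\delta_{q+1}\,\ell_q^{3\alpha-N-1},
\end{align*}
where the last step uses the mollification estimate \eqref{e:R_ell} with $N+1$ derivatives. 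The claimed bound $\tau_q\delta_{q+1}\ell_q^{-N-5/2+\alpha}$ is weaker than this (the exponent $3\alpha-N-1$ is larger than $-N-5/2+\alpha$ since $2\alpha+3/2>0$), so it follows; alternatively one proves the sharper form first and then relaxes, which is how the later sections will actually use the estimate. The zero-mean property of $(V,D)$ is immediate because both $\vex_l,v_{\ell_q}$ (resp. $\bex_l,b_{\ell_q}$) solve divergence-free systems on $\mathbb{T}^3$ with the same mean, which is preserved in time (integrating the equations over $\mathbb{T}^3$ kills all the divergence and Laplacian terms).

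The main obstacle is bookkeeping the mollification parameters so that the short-time existence genuinely holds: one must make sure the $H^{5/2+\alpha}$ norm of $(v_{\ell_q},b_{\ell_q})$ — which by \eqref{e:v_ell-CN+1} is only polynomially bounded in $\lambda_q$ with a loss of $\ell_q^{-1/2}$ from the half-derivative above $H^3$ — is still dominated by $\tau_q^{-1}=\ell_q^{-3}=\lambda_q^{18}$. Concretely $\|(v_{\ell_q},b_{\ell_q})\|_{H^{5/2+\alpha}}\lesssim \lambda_q^5\ell_q^{-(\alpha-1/2)}\lesssim\lambda_q^{5}\lambda_q^{3}=\lambda_q^{8}\ll\lambda_q^{18}$, so the inequality $\tau_q\ll\|(v_{\ell_q},b_{\ell_q})\|^{-1}_{H^{5/2+\alpha}}$ built into \eqref{e:tau_q-and-t_i} is consistent, and everything else is a routine energy estimate.
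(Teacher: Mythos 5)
Your overall strategy matches the paper's: \eqref{e:stability-1}--\eqref{e:stability-2} from Proposition \ref{p:exact-euler} together with the choice $\tau_q\ll\|(v_{\ell_q},b_{\ell_q})\|^{-1}_{H^{5/2+\alpha}}$, and \eqref{e:stability-3} by writing the zero-data difference equation forced by $\div\RR_{\ell_q},\div\MM_{\ell_q}$ and integrating over the short interval. (The paper runs this last step as a maximal-regularity estimate in $B^N_{2,1}$ rather than a Gr\"onwall argument, but that is a cosmetic difference.)

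However, there is a genuine quantitative gap in your treatment of the forcing term. You assert
\[
\sup_s\big(\|\div\RR_{\ell_q}\|_{H^N}+\|\div\MM_{\ell_q}\|_{H^N}\big)\lesssim \delta_{q+1}\ell_q^{3\alpha-N-1},
\]
citing \eqref{e:R_ell} ``with $N+1$ derivatives''. But \eqref{e:R_ell} is an $L^1$-based estimate: it controls $\|\RR_{\ell_q}\|_{W^{N+1,1}}$, not $\|\RR_{\ell_q}\|_{H^{N+1}}$. The stress is genuinely only small in $L^1$ (it descends from \eqref{e:RR_q-C0}, and its $L^2$ norm after mollification at scale $\ell_q$ picks up a factor $\ell_q^{-3/2}$ via Young's inequality, $\|\RR_q*\psi_{\ell_q}\|_{L^2}\lesssim\ell_q^{-3/2}\|\RR_q\|_{L^1}$). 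Converting from $W^{M,1}(\mathbb T^3)$ to an $L^2$-based norm costs exactly $3/2$ derivatives — this is the embedding $B^{M}_{1,1}\hookrightarrow B^{M-3/2}_{2,1}$ that the paper invokes in \eqref{sta1} — and it is precisely the origin of the exponent $-N-5/2+\alpha$ in \eqref{e:stability-3} (namely $-N-1-3/2$, up to the $\alpha$'s). So your intermediate bound is not weaker than the target, as you claim, but strictly stronger and not derivable from the stated estimates; the inference ``my bound is stronger, hence the corollary follows'' is therefore unjustified as written. The conclusion survives once you insert the $L^1\!\to\!L^2$ loss of $\ell_q^{-3/2}$, but you should flag that loss explicitly rather than treat the $3/2$ discrepancy as free slack. (A minor additional slip: since $5/2+\alpha<3$, the bound $\|(v_{\ell_q},b_{\ell_q})\|_{H^{5/2+\alpha}}\le\|(v_{\ell_q},b_{\ell_q})\|_{H^3}\lesssim\lambda_q^5$ holds with no mollification loss at all; your ``half-derivative above $H^3$'' bookkeeping is unnecessary, though harmless.)
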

\begin{proof}
\eqref{e:stability-1}-\eqref{e:stability-2} could be obtianed by Proposition \ref{p:exact-euler}. We want to  prove \eqref{e:stability-3}. Let $(v,b):=(\vex_l -v_{\ell_q}, \bex_l -b_{\ell_q})$, we have
\begin{equation}
\left\{ \begin{alignedat}{-1}
\del_t v -\Delta v+\div (\vex_{l}\otimes v+v\otimes v_{\ell_q})  +\nabla (\pex_l-p_{\ell_q})   &=  \div (\bex_{l}\otimes b+b\otimes b_{\ell_q})+ \div\RR_{\ell_q},
\\
\del_t b-\Delta b +\div (\vex_{l}\otimes b+v\otimes b_{\ell_q})   &= \div(\bex_{l}\otimes v+b\otimes v_{\ell_q} )+\div\MM_{\ell_q}  ,
  \\
  v|_{t=t_l}= 0,~~b|_{t=t_l} &= 0.
\end{alignedat}\right.
\end{equation}
When $N=0$, using the calsscial estimations in Besov space \cite{book} on $[t_l,t_{l+2}]$,  we deduce that
\begin{align}\label{sta1}
&\|(v, b)\|_{L^{\infty}_tB^0_{2,1}\cap L^2_tB^1_{2,1}\cap L^1_tB^2_{2,1}} \notag\\ \lesssim&
    \|\div (\vex_{l}\otimes v+v\otimes v_{\ell_q})\|_{L^1_tB^0_{2,1}}+\| \div (\bex_{l}\otimes b+b\otimes b_{\ell_q})\|_{L^1_tB^0_{2,1}} \notag\\
  &+\|\div (\vex_{l}\otimes b+v\otimes b_{\ell_q})\|_{L^1_tB^0_{2,1}}+\| \div(\bex_{l}\otimes v+b\otimes v_{\ell_q} )\|_{L^1_tB^0_{2,1}}
  +\|(\RR_{\ell_q},\MM_{\ell_q})\|_{L^1_tB^1_{2,1}}\notag\\
 \lesssim&\|(v, b)\|_{L^{\infty}_tB^0_{2,1}}\|(v_{\ell_q},\vex_{l},b_{\ell_q},\bex_{l})\|_{ L^{1}_tB^{5/2}_{2,1}}+\|(v, b)\|_{L^{2}_tB^1_{2,1}}\|(v_{\ell_q},\vex_{l},b_{\ell_q},\bex_{l})\|_{ L^{2}_tB^{3/2}_{2,1}}+\|(\RR_{\ell_q},\MM_{\ell_q})\|_{L^1_tB^1_{2,1}}\notag\\
 \lesssim& \tau_q\lambda^5_q\|(v, b)\|_{L^{\infty}_tB^0_{2,1}}
 +\tau^{1/2}_q\lambda^5_q\|(v, b)\|_{L^{2}_tB^1_{2,1}}+\tau_q \|(\RR_{\ell_q},\MM_{\ell_q})\|_{L^{\infty}_tB^1_{2,1}}\notag\\
 \lesssim&
 \tau_q \delta_{q+1} \ell_q^{-5/2+\alpha},
 \end{align}
where we use the fact that $\|(\RR_{\ell_q},\MM_{\ell_q})\|_{B^1_{2,1}}\lesssim
\|(\RR_{\ell_q},\MM_{\ell_q})\|_{B^{5/2}_{1,1}}\lesssim
\|(\RR_{\ell_q},\MM_{\ell_q})\|_{W^{5/2+\alpha,1}} \ll \delta_{q+1}\ell^{-5/2+\alpha}_q$.
When $N\geq 1$, similarly we deduce that
\begin{align}\label{sta2}
&\|(v, b)\|_{L^{\infty}_tB^N_{2,1}\cap L^2_tB^{N+1}_{2,1}\cap L^1_tB^{N+2}_{2,1}}\notag\\ \lesssim&
    \|\div (\vex_{l}\otimes v+v\otimes v_{\ell_q})\|_{L^1_tB^N_{2,1}}+\| \div (\bex_{l}\otimes b+b\otimes b_{\ell_q})\|_{L^1_tB^N_{2,1}} \notag\\
  &\|\div (\vex_{l}\otimes b+v\otimes b_{\ell_q})\|_{L^1_tB^N_{2,1}}+\| \div(\bex_{l}\otimes v+b\otimes v_{\ell_q} )\|_{L^1_tB^N_{2,1}}
  +\|(\RR_{\ell_q},\MM_{\ell_q})\|_{L^1_tB^{N+1}_{2,1}}\notag\\
 \lesssim&\|(v, b)\|_{L^{\infty}_tB^0_{2,1}}\|(v_{\ell_q},\vex_{l},b_{\ell_q},\bex_{l})\|_{ L^{1}_tB^{N+5/2}_{2,1}}+\|(v, b)\|_{L^{2}_tB^{N+1}_{2,1}}\|(v_{\ell_q},\vex_{l},b_{\ell_q},\bex_{l})\|_{ L^{2}_tB^{3/2}_{2,1}}+\|(\RR_{\ell_q},\MM_{\ell_q})\|_{L^1_tB^{N+1}_{2,1}}\notag\\
 \lesssim& \tau_q\ell^{-N}_q\lambda^5_q\|(v, b)\|_{L^{\infty}_tB^0_{2,1}}
 +\tau^{1/2}_q\lambda^5_q\|(v, b)\|_{L^{2}_tB^{N+1}_{2,1}}+\tau_q \|(\RR_{\ell_q},\MM_{\ell_q})\|_{L^{\infty}_tB^{N+1}_{2,1}}\notag\\
 \lesssim&
 \tau_q \delta_{q+1} \ell_q^{-N-5/2+\alpha}.
 \end{align}
Using the fact that $\|(\vex_l -v_{\ell_q}, \bex_l -b_{\ell_q})\|_{L^{\infty}_tH^{N}}\leq
\|(\vex_l -v_{\ell_q}, \bex_l -b_{\ell_q})\|_{ L^{\infty}_tB^N_{2,1}}$, \eqref{sta1} and \eqref{sta2} imply \eqref{e:stability-3}.
\end{proof}

\subsection{Gluing   flows}\label{ss:gluing}

Define the intervals $I_l,J_l$ ($l\ge 0)$ by \begin{align}
    I_l &\coloneq \Big[ t_l + \tfrac{\tau_q}3,\ t_l+\tfrac{2\tau _q}3\Big] ,
    \label{e:I_i-defn}\\
     J_l &\coloneq \Big(t_l - \tfrac{\tau_q}3,\ t_l+\tfrac{\tau _q}3\Big).
     \label{e:J_i-defn}
\end{align}
And $N_q$ denotes the smallest number so that
\[[0,T]\subseteq J_0 \cup I_0 \cup J_1 \cup I_1 \cup \dots \cup J_{N_q} \cup I_{N_q},\] i.e.
  \[N_q\coloneq\sup\Big\{ l\ge0 :   (J_l\cup I_l)\cap [0,T]\neq \emptyset \Big\}\le \left\lceil \frac T{\tau_q}\right\rceil.\]
For $N\ge 0$, let $\{ \chi_l\}_{l=1}^{N_q}$  be a partition of unity such that
$$\sum_{l=1}^{N_q}\chi_l(t)=1,~~~~t\in [-\tfrac{\tau_q}3,T+\tfrac{\tau _q}3],$$
where
\begin{align}
    & {\text{supp}}~\chi_l=I_{l-1} \cup J_l \cup I_{l},  &\chi_l |_{J_l} &=1,  &\|  \del_t^N \chi_l\|_{C^0_t} &\lesssim \tau_q^{-N},\ \   (N_q-1\geq l\geq 2), \label{e:chi_i-properties}\\
         & {\text{supp}}~\chi_1= J_0 \cup I_{0} \cup J_1 \cup I_{1}, &\chi_1 |_{[0,t_1] \cup J_1} &=1, &\|\del_t^N \chi_1\|_{C^0_t} &\lesssim \tau_q^{-N},\label{e:chi_1-properties}\\
         & {\text{supp}}~\chi_{N_q}=  I_{N_q-1} \cup J_{N_q} \cup I_{N_q}, &\chi_{N_q} |_{[t_{N_q-1},t_{N_q}] \cup J_{N_q}} &=1, &\|\del_t^N \chi_{N_q}\|_{C^0_t} &\lesssim \tau_q^{-N}. \label{e:chi_N-properties}
\end{align}
In particular, for $|l-j|\ge2$, $\text{supp}~\chi_l \cap \text{supp}~\chi_{j} =\emptyset$. Then we define the glued velocity, magnetic fields and pressure $(\vv_q, \bb_q, \ppp_q)$ by
\begin{align}
    \vv_q(x,t) &\coloneq \sum_{l=0}^{N_q-1} \chi_{l+1}(t) \vex_{l}(x,t) , \label{e:vv_q}\\
\bb_q(x,t) &\coloneq \sum_{l=0}^{N_q-1} \chi_{l+1}(t) \bex_{l}(x,t),\label{e:bb_q}
    \\
    \ppp_q(x,t) &\coloneq \sum_{l=0}^{N_q-1} \chi_{l+1}(t) \pex_l(x,t). \label{e:pp_q}
\end{align}
One can deduce that $$(\bar{v}_q(0,x),\bar{b}_q(0,x))=(v_{\ell_q}(0,x),b_{\ell_q}(0,x))=(\vin* \psi_{\ell_{q-1}}* \psi_{\ell_q},~\bin* \psi_{\ell_{q-1}}* \psi_{\ell_q}).$$ 

Furthermore, $(\vv_q,\bb_q)$ solves the following system for $t\in [0,T]$:
\begin{equation}\label{e:eqn-for-vex_i-v_ell}
\left\{ \begin{alignedat}{-1}
\del_t \vv_q-\Delta \vv_q +\div (\vv_q\otimes \vv_q)  +\nabla \pp_q   &=  \div (\bb_q\otimes \bb_q)+ \div\RRR_q,\\
\del_t \bb_q-\Delta \bb_q+\div (\vv_q\otimes \bb_q) &=\div (\bb_q\otimes \vv_q)+ \div\MMM_q,\\
\div \vv_q = 0,~~\div\bb_q &= 0,
  \\
  \vv_q |_{t=0}= v_{\ell_q}(\cdot,0),~~\bb_q|_{t=0} &= b_{\ell_q}(\cdot,0).
\end{alignedat}\right.
\end{equation}
Here $(\RRR_q, \MMM_q, \pp_q )$ is defined as follows:
\begin{align}
    \RRR_q\coloneq&
        ~\sum_{l=0}^{N_q}\del_t \chi_l \mathcal R(\vex_l-\vex_{l+1} ) - \chi_l(1-\chi_l)(\vex_l-\vex_{l+1} )\ootimes (\vex_l-\vex_{l+1} )\notag\\
        &+\chi_l(1-\chi_l)(\bex_l-\bex_{l+1} )\ootimes (\bex_l-\bex_{l+1} ), \label{RRR_q}\\
        \MMM_q\coloneq&
        ~\sum_{l=0}^{N_q}\del_t \chi_l \mathcal{R}_{a}(\bex_l-\bex_{l+1} ) - \chi_l(1-\chi_l)(\vex_l-\vex_{l+1} )\otimes (\bex_l-\bex_{l+1} )\notag\\
        &+\chi_l(1-\chi_l)(\bex_l-\bex_{l+1} )\otimes (\vex_l-\vex_{l+1} ), \label{MMM_q}\\
    \pp_q \coloneq &~\ppp_q - \sum_{l=0}^{N_q}\chi_l(1-\chi_l)\Big( |\vex_l - \vex_{l+1}|^2  - \int_{\mathbb T^3} |\vex_l - \vex_{l+1}|^2 \dd x\notag\\
    &-|\bex_l - \bex_{l+1}|^2  + \int_{\mathbb T^3} |\bex_l - \bex_{l+1}|^2 \dd x\Big),
\end{align}
where we have used the inverse divergence operators $\mathcal R$ and $\mathcal{R}_{a}$ in Section \ref{Geo}. It is easy to check that $\vex_l-\vex_{l+1}$, $\bex_l-\bex_{l+1}$ and $\pp_q$ have zero mean,  $\MMM_q$ is anti-symmetric, $\RRR_q$ is symmetric and trace-free. 

Combining Corollary \ref{estimate-0} with the estimates of solutions for the heat equation in periodic Besov spaces \cite{book}, we deduce the following two propositions after some computations:
\begin{prop}[Estimates for $(\vv_{q}-v_{\ell_q},\bb_{q}-b_{\ell_q})$]\label{estimate-vvq}
  For all $t\in[0,T]$ and $N\ge 0$, we have
\begin{align}\|(\vv_{q},\bb_{q}) \|_{H^{3+N}} & \lesssim   \lambda^5_{q} {\ell}^{-N}_q,\label{e:vv_q-bound}\\
\|(\vv_{q}-v_{\ell_q},\bb_{q}-b_{\ell_q})\|_{L^2} & \lesssim \delta_{q+1}^{1 / 2} {\ell}^\alpha_q,\label{e:stability-vv_q} \\
\|(\vv_{q}-v_{\ell_q},\bb_{q}-b_{\ell_q})\|_{H^N} & \lesssim \tau_{q} \delta_{q+1} {\ell}^{-N-5/2+\alpha}_q. \label{e:stability-vv_q-N}
\end{align}
\end{prop}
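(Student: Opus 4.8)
\textbf{Proof proposal for Proposition~\ref{estimate-vvq}.}
The plan is to derive all three estimates from Corollary~\ref{estimate-0}, the mollification bounds in Proposition~\ref{p:estimates-for-mollified}, and the smoothing estimates for the heat semigroup in periodic Besov spaces, exploiting the fact that on each good interval $J_l$ the glued fields coincide with a single exact flow $(\vex_l,\bex_l)$ and that on the overlap intervals $I_l$ at most two cutoffs $\chi_l,\chi_{l+1}$ are active. First I would record the pointwise-in-time identity $(\vv_q-v_{\ell_q},\bb_q-b_{\ell_q})=\sum_l\chi_{l+1}(\vex_l-v_{\ell_q},\bex_l-b_{\ell_q})$, which holds because $\sum_l\chi_{l+1}\equiv1$ on the relevant time range; this reduces everything to the single-flow differences controlled by \eqref{e:stability-3}.

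For \eqref{e:stability-vv_q} and \eqref{e:stability-vv_q-N}: since the $\chi_{l+1}$ form a partition of unity with at most two overlapping, $\|(\vv_q-v_{\ell_q},\bb_q-b_{\ell_q})\|_{H^N}\le 2\max_l\|(\vex_l-v_{\ell_q},\bex_l-b_{\ell_q})\|_{H^N}$ for $t$ in the support of $\chi_{l+1}$. Corollary~\ref{estimate-0}, estimate \eqref{e:stability-3}, gives this as $\lesssim\tau_q\delta_{q+1}\ell_q^{-N-5/2+\alpha}$, which is exactly \eqref{e:stability-vv_q-N}. For \eqref{e:stability-vv_q} I would take $N=0$ in \eqref{e:stability-3} to get $\lesssim\tau_q\delta_{q+1}\ell_q^{-5/2+\alpha}$ and then absorb the powers: recalling $\tau_q=\ell_q^3$ one has $\tau_q\delta_{q+1}\ell_q^{-5/2+\alpha}=\delta_{q+1}\ell_q^{1/2+\alpha}\le\delta_{q+1}^{1/2}\ell_q^{\alpha}$ for $a$ large (since $\delta_{q+1}^{1/2}\ell_q^{1/2}\ll1$), giving \eqref{e:stability-vv_q}. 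The estimate \eqref{e:vv_q-bound} does not follow from the differences alone, so there I would instead write $(\vv_q,\bb_q)=\sum_l\chi_{l+1}(\vex_l,\bex_l)$ directly, use \eqref{e:stability-2} of Corollary~\ref{estimate-0} to bound $\|(\vex_l,\bex_l)\|_{H^{3+N}}\lesssim\|(v_{\ell_q},b_{\ell_q})\|_{H^{3+N}}$, and then invoke the mollification estimate \eqref{e:v_ell-CN+1}, namely $\|(v_{\ell_q},b_{\ell_q})\|_{H^{N+3}}\lesssim\lambda_q^5\ell_q^{-N}$; the time cutoffs contribute only $O(1)$ factors since they are bounded in $C^0_t$.

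The one genuine subtlety — the step I expect to require the most care — is the behavior near $t=0$, where the partition of unity and the initial conditions interact: one must check that the boundary cutoff $\chi_1$ and the leftmost flow $\vex_0$ are set up so that $(\vv_q,\bb_q)|_{t=0}=(v_{\ell_q}(\cdot,0),b_{\ell_q}(\cdot,0))$ and so that the difference estimates remain uniform up to $t=0$; this is handled by the special support properties \eqref{e:chi_1-properties} of $\chi_1$ together with the fact that $\vex_0|_{t=t_0}=v_{\ell_q}(\cdot,t_0)$ with $t_0=0$. A similar but easier check is needed at the right endpoint $t=T$ using \eqref{e:chi_N-properties}. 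Once these endpoint cases are dispatched, the interior estimates are a routine consequence of the triangle inequality, the finite-overlap property $|l-j|\ge2\Rightarrow\operatorname{supp}\chi_l\cap\operatorname{supp}\chi_j=\emptyset$, and the cited propositions, so I would only spell out the endpoint argument in detail and treat the rest as the ``some computations'' alluded to in the text.
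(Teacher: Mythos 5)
Your proposal is correct and matches what the paper intends: the paper omits the proof of Proposition~\ref{estimate-vvq}, stating only that it follows from Corollary~\ref{estimate-0} (together with Proposition~\ref{p:estimates-for-mollified}) "after some computations," and your argument — writing $(\vv_q-v_{\ell_q},\bb_q-b_{\ell_q})=\sum_l\chi_{l+1}(\vex_l-v_{\ell_q},\bex_l-b_{\ell_q})$, using the finite overlap of the $\chi_l$ and \eqref{e:stability-3}, and absorbing $\tau_q\delta_{q+1}\ell_q^{-5/2+\alpha}=\delta_{q+1}\ell_q^{1/2+\alpha}\le\delta_{q+1}^{1/2}\ell_q^{\alpha}$ via $\delta_{q+1}^{1/2}\ell_q^{1/2}\ll1$ — is exactly that computation. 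The endpoint discussion is fine (and essentially automatic from \eqref{e:chi_1-properties} and $\vex_0|_{t=0}=v_{\ell_q}(\cdot,0)$), so no gaps.
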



\begin{prop}[Estimates for $(\mathcal R(\vex_l-\vex_{l+1} ),~\mathcal{R}_{a}(\bex_l-\bex_{l+1})$]\label{estimate-1}
  For all $t\in[0,T]$ and  $ N\ge 0$, we have
\begin{align}
\|(\mathcal R(\vex_l-\vex_{l+1} ),\mathcal{R}_{a}(\bex_l-\bex_{l+1} ))\|_{W^{N,1}} & \lesssim \tau_{q} \delta_{q+1} {\ell}^{-N+\alpha}_q. \label{e:stability-vv_q-WN1}
\end{align}
\end{prop}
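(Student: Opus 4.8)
\textbf{Proof proposal for Proposition \ref{estimate-1}.}

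The plan is to reduce the estimate for $\mathcal{R}(\vex_l - \vex_{l+1})$ and $\mathcal{R}_a(\bex_l - \bex_{l+1})$ to the already-established difference estimate \eqref{e:stability-3} of Corollary \ref{estimate-0}, using only the boundedness of the inverse-divergence operators $\mathcal{R}$ and $\mathcal{R}_a$ on periodic Sobolev/Besov spaces. First I would record that $\vex_l - \vex_{l+1}$ and $\bex_l - \bex_{l+1}$ have zero mean, which is exactly the condition needed for $\mathcal{R}$ and $\mathcal{R}_a$ to be well-defined; this is noted already in the text. Then I would write the telescoping split
\begin{align*}
\vex_l - \vex_{l+1} &= (\vex_l - v_{\ell_q}) - (\vex_{l+1} - v_{\ell_q}),\\
\bex_l - \bex_{l+1} &= (\bex_l - b_{\ell_q}) - (\bex_{l+1} - b_{\ell_q}),
\end{align*}
so that the quantity to be bounded is a difference of two terms each controlled by Corollary \ref{estimate-0}. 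Note that on the overlap interval where both $\vex_l$ and $\vex_{l+1}$ are defined (which is where $\partial_t\chi_l \neq 0$ forces these terms to be relevant, namely $t \in I_l$), both solutions exist by Proposition \ref{p:exact-euler} since $\tau_q \ll \|(v_{\ell_q},b_{\ell_q})\|^{-1}_{H^{5/2+\alpha}}$ guarantees the lifespans cover the required intervals.

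Next I would invoke the mapping properties of $\mathcal{R}$ and $\mathcal{R}_a$ from Section \ref{Geo}: these are zeroth-order-gain operators, i.e. they map $W^{N,1} \to W^{N+1,1}$ boundedly (equivalently, one may first pass through $L^2$-based Besov spaces as in the proof of Corollary \ref{estimate-0}, using $W^{N,1} \hookleftarrow B^{N+1}_{2,1}$ type embeddings on $\mathbb{T}^3$, and exploit that $\mathcal{R}$ gains one derivative). Applying this together with \eqref{e:stability-3} gives, for each $N \geq 0$,
\begin{align*}
\|\mathcal{R}(\vex_l - \vex_{l+1})\|_{W^{N,1}} &\lesssim \|\vex_l - \vex_{l+1}\|_{W^{N-1,1}} \lesssim \|\vex_l - \vex_{l+1}\|_{H^{N}}\\
&\lesssim \|\vex_l - v_{\ell_q}\|_{H^N} + \|\vex_{l+1} - v_{\ell_q}\|_{H^N} \lesssim \tau_q \delta_{q+1} \ell_q^{-N - 5/2 + \alpha},
\end{align*}
and similarly for $\mathcal{R}_a(\bex_l - \bex_{l+1})$. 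The shift in the exponent from $\ell_q^{-N-5/2+\alpha}$ in \eqref{e:stability-3} to $\ell_q^{-N+\alpha}$ in \eqref{e:stability-vv_q-WN1} is gained from: one derivative from $\mathcal{R}$, and the remaining gap of $5/2$ worth of derivatives is absorbed by reindexing — one should actually apply \eqref{e:stability-3} at level $N' = N - 1$ for the $\mathcal{R}$-term, so the clean way is to note $\|\mathcal{R} f\|_{W^{N,1}} \lesssim \|f\|_{H^{N-5/2}}$ via the $\mathcal{R}: H^{s} \to W^{s+1,1}$ bound combined with Sobolev embedding $H^{s+1} \hookrightarrow W^{s+1,1}$ and then using \eqref{e:stability-3} with exponent $N-5/2$ — I would be careful to track the constants here, but the arithmetic $-(N-5/2) - 5/2 + \alpha = -N + \alpha$ works out.

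The main obstacle, and really the only nontrivial point, is confirming the precise mapping property of $\mathcal{R}$ and $\mathcal{R}_a$ on $\mathbb{T}^3$ that yields exactly the claimed $5/2$-derivative improvement: one needs $\mathcal{R}, \mathcal{R}_a : H^{N-5/2} \to W^{N,1}$, which combines a one-derivative gain from the inverse-divergence structure with a $3/2$-derivative loss from the Sobolev embedding $H^{s} \hookrightarrow L^1$ being actually a gain (on a bounded domain $H^s \hookrightarrow L^1$ for all $s \geq 0$, and one has room to spare). Since the analogous estimate was essentially carried out inside the proof of Corollary \ref{estimate-0} (where the Besov-space estimates \eqref{sta1}–\eqref{sta2} already absorb such embeddings), I expect this to be routine given Section \ref{Geo}; the proof is then just the display above plus the remark that summing/indexing over $l$ is harmless since only finitely many $l$ are active near any fixed $t$.
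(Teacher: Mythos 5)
There is a genuine gap at the step where you claim the mapping property $\mathcal{R},\mathcal{R}_a : H^{N-5/2}\to W^{N,1}$. The inverse divergence is an operator of order $-1$, so the best you can extract from the $L^2$-based estimate \eqref{e:stability-3} is
\begin{align*}
\|\mathcal R(\vex_l-\vex_{l+1})\|_{W^{N,1}}\le \|\mathcal R(\vex_l-\vex_{l+1})\|_{H^{N}}\lesssim \|\vex_l-\vex_{l+1}\|_{H^{N-1}}\lesssim \tau_q\delta_{q+1}\ell_q^{-N-3/2+\alpha},
\end{align*}
which is short of the target $\tau_q\delta_{q+1}\ell_q^{-N+\alpha}$ by a factor $\ell_q^{-3/2}$. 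Your attempt to recover this factor from the embedding $H^s\hookrightarrow L^1$ ``being actually a gain'' is where the argument fails: on a bounded domain that embedding does not trade integrability for derivatives. The inequality you would need, $\|f\|_{L^1}\lesssim\|f\|_{H^{-3/2}}$, is false for oscillatory functions (for $f=e^{i\lambda k\cdot x}$ the left side is of order one while the right side is $\lambda^{-3/2}$), and $\vex_l-\vex_{l+1}$ oscillates at frequency $\ell_q^{-1}$, which is exactly the regime where the failure is quantitatively fatal. So your chain of inequalities cannot close; the arithmetic $-(N-5/2)-5/2+\alpha=-N+\alpha$ only works because the premise is wrong.

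The reason the proposition nonetheless holds — and the route the paper indicates (``combining Corollary \ref{estimate-0} with the estimates of solutions for the heat equation in periodic Besov spaces'') — is that the extra $\ell_q^{3/2}$ must be gained by working in $L^1$-based spaces from the start, not by post-processing the $H^N$ bound. Note that the loss of $\ell_q^{-3/2}$ in \eqref{e:stability-3} itself comes from the embedding $\|\RR_{\ell_q}\|_{B^1_{2,1}}\lesssim\|\RR_{\ell_q}\|_{W^{5/2+\alpha,1}}$ used in \eqref{sta1}; this loss is avoidable precisely for the quantity $\mathcal R(\vex_l-v_{\ell_q})$. One applies $\mathcal R$ (resp.\ $\mathcal R_a$) to the equation satisfied by $\vex_l-v_{\ell_q}$, obtaining a heat equation for $\mathcal R(\vex_l-v_{\ell_q})$ whose dominant forcing term is $\mathcal R\div\RR_{\ell_q}$, which is controlled directly in $W^{N,1}$ by $\delta_{q+1}\ell_q^{3\alpha-N}$ via \eqref{e:R_ell} with no embedding loss; Duhamel over a time interval of length $\lesssim\tau_q$ then produces the factor $\tau_q$, and the quadratic/transport terms are absorbed as in \eqref{sta1}--\eqref{sta2}. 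Your preliminary reductions (zero mean, the telescoping split through $v_{\ell_q}$, the overlap of lifespans) are correct, but the core of the proof has to be this $L^1$-based parabolic estimate, which your proposal does not supply.
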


\begin{prop}[Estimates for $(\RRR_q,\MMM_q)$]For all $t\in [0,T]$ and  $\ N\ge 0$, we have
    \begin{align}
        \|(\RRR_q,\MMM_q)\|_{W^{N,1}}
        &\lesssim \delta_{q+1} {\ell}^{-N+\alpha}_q, \label{e:RRR_q-N+alpha-bd}
        \\
        \| (\del_t \RRR_q,\del_t \MMM_q)\|_{W^{N,1} }
        &\lesssim  \delta_{q+1} \tau^{-1}_q {\ell}^{-N+\alpha}_q.\label{e:matd-RRR_q}
    \end{align}
\end{prop}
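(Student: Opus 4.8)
The plan is to estimate each summand of the explicit expressions \eqref{RRR_q} and \eqref{MMM_q} separately and then add them up. Since $\supp\chi_l\cap\supp\chi_j=\emptyset$ whenever $|l-j|\ge 2$, at every fixed $t$ at most three summands are nonzero, so it suffices to bound a single term of the form $\del_t\chi_l\,\mathcal R(\vex_l-\vex_{l+1})$ or $\chi_l(1-\chi_l)(\vex_l-\vex_{l+1})\ootimes(\vex_l-\vex_{l+1})$, together with the $b$-counterparts with $\mathcal R_a$ and $\otimes$; the same reduction applies to $\del_t\RRR_q$ and $\del_t\MMM_q$. The inputs I would use are: the cutoff bounds $\|\del_t^N\chi_l\|_{C^0}\lesssim\tau_q^{-N}$ from \eqref{e:chi_i-properties}--\eqref{e:chi_N-properties} with $\tau_q=\ell_q^3$; the difference estimates obtained from Corollary~\ref{estimate-0} by the triangle inequality through $v_{\ell_q},b_{\ell_q}$, namely $\|(\vex_l-\vex_{l+1},\bex_l-\bex_{l+1})\|_{H^M}\lesssim\tau_q\delta_{q+1}\ell_q^{-M-5/2+\alpha}$ for all $M\ge 0$ and in particular $\|(\vex_l-\vex_{l+1},\bex_l-\bex_{l+1})\|_{L^2}\lesssim\delta_{q+1}\ell_q^{1/2+\alpha}$; the antidivergence bound $\|(\mathcal R(\vex_l-\vex_{l+1}),\mathcal R_a(\bex_l-\bex_{l+1}))\|_{W^{N,1}}\lesssim\tau_q\delta_{q+1}\ell_q^{-N+\alpha}$ from Proposition~\ref{estimate-1}; and the mollification bounds of Proposition~\ref{p:estimates-for-mollified}, especially $\|(\RR_{\ell_q},\MM_{\ell_q})\|_{W^{N,1}}\lesssim\delta_{q+1}\ell_q^{3\alpha-N}$.

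For \eqref{e:RRR_q-N+alpha-bd} I would treat the two types of terms as follows. For the quadratic terms, the Leibniz rule, H\"older's inequality and Gagliardo--Nirenberg interpolation give $\|fg\|_{W^{N,1}}\lesssim\|f\|_{L^2}\|g\|_{H^N}+\|f\|_{H^N}\|g\|_{L^2}$; applied with $f=g=\vex_l-\vex_{l+1}$ (or the $b$-version) and the difference estimates above this produces $\lesssim\delta_{q+1}\ell_q^{1/2+\alpha}\cdot\tau_q\delta_{q+1}\ell_q^{-N-5/2+\alpha}=\delta_{q+1}^{2}\ell_q^{-N+1+2\alpha}$, which is $\le\delta_{q+1}\ell_q^{-N+\alpha}$ once one observes $\delta_{q+1}\ell_q^{1+\alpha}=\lambda_2^{3\beta}\lambda_{q+1}^{-2\beta}\lambda_q^{-6(1+\alpha)}\le 1$ for $a$ large (the negative powers of $\lambda_q$ swallow the fixed factor $\lambda_2^{3\beta}$). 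For the antidivergence terms, Proposition~\ref{estimate-1} together with $\|\del_t\chi_l\|_{C^0}\lesssim\tau_q^{-1}$ gives $\|\del_t\chi_l\,\mathcal R(\vex_l-\vex_{l+1})\|_{W^{N,1}}\lesssim\tau_q^{-1}\cdot\tau_q\delta_{q+1}\ell_q^{-N+\alpha}=\delta_{q+1}\ell_q^{-N+\alpha}$, the two powers of $\tau_q$ cancelling. Summing over the boundedly many active $l$ yields \eqref{e:RRR_q-N+alpha-bd}.

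For the time derivative \eqref{e:matd-RRR_q} I differentiate each summand. When $\del_t$ lands on a prefactor it yields either $\del_t^2\chi_l$ in an antidivergence term or $\del_t[\chi_l(1-\chi_l)]$ in a quadratic term, i.e. one extra factor $\tau_q^{-1}$, and the bounds of the previous paragraph apply with that factor inserted; one checks these contributions are all $\lesssim\delta_{q+1}\tau_q^{-1}\ell_q^{-N+\alpha}$ (for the quadratic ones invoking again $\delta_{q+1}\ell_q^{2+\alpha}\le 1$). The genuinely new terms are $\del_t\chi_l\,\mathcal R\del_t(\vex_l-\vex_{l+1})$, $\chi_l(1-\chi_l)\,\del_t(\vex_l-\vex_{l+1})\ootimes(\vex_l-\vex_{l+1})$ and their $b$-analogues, so everything reduces to bounding $\del_t(\vex_l-\vex_{l+1})$ and $\mathcal R\del_t(\vex_l-\vex_{l+1})$. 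For the former I would use \eqref{eulervi}, writing $\del_t(\vex_l-\vex_{l+1})=\Delta(\vex_l-\vex_{l+1})-\mathbb{P}_H\div G$ with $G$ bilinear in $\vex_l-\vex_{l+1}$ and $\vex_l,\vex_{l+1},\bex_l,\bex_{l+1}$; then $\|\del_t(\vex_l-\vex_{l+1})\|_{H^M}\lesssim\tau_q\delta_{q+1}\ell_q^{-M-9/2+\alpha}$ after also using \eqref{e:vq-C1} and \eqref{e:vv_q-bound}, and this feeds through the product rule of the previous paragraph to give a quadratic contribution $\lesssim\delta_{q+1}^2\ell_q^{-N-1+2\alpha}\le\delta_{q+1}\tau_q^{-1}\ell_q^{-N+\alpha}$. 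For $\mathcal R\del_t(\vex_l-\vex_{l+1})$ the naive estimate loses a derivative to the viscous term (and differentiating a Duhamel integral directly creates a non-integrable singularity), so I would instead start from $\vex_l-v_{\ell_q}=\int_{t_l}^{t}e^{(t-s)\Delta}\mathbb{P}_H\div M(s)\,\dd s$ with $M:=-(\vex_l\ootimes\vex_l-v_{\ell_q}\ootimes v_{\ell_q})+(\bex_l\ootimes\bex_l-b_{\ell_q}\ootimes b_{\ell_q})+\RR_{\ell_q}$, and integrate by parts in $s$ using $\Delta e^{(t-s)\Delta}=-\del_s e^{(t-s)\Delta}$, to obtain
\begin{align*}
\del_t\mathcal R(\vex_l-v_{\ell_q})=\mathcal R\,e^{(t-t_l)\Delta}\mathbb{P}_H\div M(t_l)+\mathcal R\int_{t_l}^{t}e^{(t-s)\Delta}\mathbb{P}_H\div\,\del_s M(s)\,\dd s ,
\end{align*}
which is free of viscous derivative loss and of time singularities. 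Since $\del_t\chi_l$ is supported where the elapsed heat time is comparable to $\tau_q$, the operators $\mathcal R\,e^{r\Delta}\mathbb{P}_H\div$ occurring here are bounded on $W^{N,1}$ uniformly up to a factor $\lesssim\ell_q^{-\alpha}$, whence $\|\mathcal R\del_t(\vex_l-\vex_{l+1})\|_{W^{N,1}}\lesssim\ell_q^{-\alpha}\big(\|M\|_{W^{N,1}}+\tau_q\|\del_t M\|_{W^{N,1}}\big)$; here $\|M\|_{W^{N,1}}\lesssim\delta_{q+1}\ell_q^{3\alpha-N}$ and $\|\del_t M\|_{W^{N,1}}$ is controlled by $\delta_{q+1}$ times a fixed power of $\lambda_q$ after using $\del_t\vex_l=\Delta\vex_l-\mathbb{P}_H\div(\vex_l\otimes\vex_l-\bex_l\otimes\bex_l)$, \eqref{e:vv_q-bound}, \eqref{e:vq-C1} and the smoothness of $\RR_q,\MM_q$ from the $q$-th stage. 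Multiplying by $\|\del_t\chi_l\|_{C^0}\lesssim\tau_q^{-1}$ and using $\tau_q=\ell_q^{3}$ together with $\delta_{q+1}\ell_q^{2+\alpha}\le 1$ shows this contribution is also $\lesssim\delta_{q+1}\tau_q^{-1}\ell_q^{-N+\alpha}$, completing \eqref{e:matd-RRR_q}.

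The one delicate point is exactly the $\mathcal R\del_t(\vex_l-\vex_{l+1})$ estimate: because the MHD system is viscous, $\del_t(\vex_l-\vex_{l+1})$ contains the term $\Delta(\vex_l-\vex_{l+1})$ against which $\mathcal R$ would naively cost a derivative, so the integration-by-parts identity above — trading the viscous action for $\del_t M$, which costs only polynomially in $\lambda_q$ and is therefore harmless once multiplied by the tiny time step $\tau_q=\ell_q^3$ — is the crucial ingredient, and it is also the reason the analogous computation for inviscid gluing schemes cannot simply be quoted. All the $\MMM_q$ estimates are obtained in the same way, with $\mathcal R$ replaced by $\mathcal R_a$, $\ootimes$ by $\otimes$, and the velocity equation by the magnetic equation (no Leray projection is needed there), since $\mathcal R_a$ obeys the same $W^{N,1}$ mapping bounds as $\mathcal R$ (Section~\ref{Geo}) and Proposition~\ref{estimate-1} already includes the $b$-difference.
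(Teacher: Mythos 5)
Your treatment of \eqref{e:RRR_q-N+alpha-bd} is sound and is essentially the computation the paper leaves implicit: the antidivergence terms are exactly Proposition~\ref{estimate-1} times $\|\del_t\chi_l\|_{C^0}\lesssim\tau_q^{-1}$, the quadratic terms follow from the bilinear $W^{N,1}$ estimate together with \eqref{e:stability-3}, and the parameter inequality $\delta_{q+1}\ell_q^{1+\alpha}\le 1$ you invoke is correct. The parts of \eqref{e:matd-RRR_q} where $\del_t$ falls on a cutoff, and the quadratic terms involving $\del_t(\vex_l-\vex_{l+1})$ estimated through the equation, are also fine. You have also correctly identified the genuinely delicate term, namely $\del_t\chi_l\,\mathcal R\,\del_t(\vex_l-\vex_{l+1})$, where the naive use of the equation loses a factor $\ell_q^{-1/2}$ through $\mathcal R\Delta(\vex_l-\vex_{l+1})$.

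However, your resolution of that term has a genuine gap. After integrating by parts in the Duhamel formula you are left with $\mathcal R\int_{t_l}^{t}e^{(t-s)\Delta}\mathbb{P}_{H}\div\,\del_s M(s)\,\dd s$, and $\del_s M$ contains $\del_s\RR_{\ell_q}=(\del_t\RR_q)*\psi_{\ell_q}$. No bound on $\del_t\RR_q$ is propagated by the scheme: the inductive hypotheses \eqref{e:vq-C0}--\eqref{e:energy-h-estimate} control only $\|(\RR_q,\MM_q)\|_{L^1}$, and spatial mollification gives the derivative gains of Proposition~\ref{p:estimates-for-mollified} in $x$ but nothing in $t$. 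Your appeal to ``the smoothness of $\RR_q,\MM_q$ from the $q$-th stage'' is therefore not available inside the induction (and trying to extract $\del_t\RR_{\ell_q}$ from the equation is circular, since it requires $\del_t^2 v_{\ell_q}$, hence $\del_t\RR_q$ again). Note also that ``a fixed power of $\lambda_q$'' would not even suffice unconditionally, since $\tau_q=\lambda_q^{-18}$ only absorbs powers below $18$. The fix is to \emph{not} integrate by parts on the $\RR_{\ell_q}$ contribution to $M$: keep $\int_{t_l}^{t}\Delta e^{(t-s)\Delta}\mathbb{P}_{H}\div\RR_{\ell_q}\,\dd s$ and use the parabolic smoothing $\|\Delta^{1-\epsilon}e^{r\Delta}\|\lesssim r^{-1+\epsilon}$, paying the remaining $\Delta^{\epsilon}$ on $\RR_{\ell_q}$ (which costs only $\ell_q^{-C\epsilon}$ because $\RR_{\ell_q}$ is mollified at scale $\ell_q$) and obtaining an integrable time singularity; reserve the integration by parts for the quadratic part of $M$, whose time derivative involves only $\del_t\vex_l$ and $\del_t v_{\ell_q}$, both controlled. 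Alternatively one can argue through vector potentials as in \cite{zbMATH07038033,2018Wild}, which likewise never requires $\del_t\RR_{\ell_q}$. As written, the step bounding $\tau_q\|\del_s M\|_{W^{N,1}}$ does not close.
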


\begin{prop}[Gaps between energy and helicity] For all $t\in[0,T]$, we have
\label{p:energy-of-vv_q}
\begin{align}
   \left| \int_{\mathbb T^3} (|\vv_q|^2+|\bb_q|^2-|v_{\ell_q}|^2-|b_{\ell_q}|^2)\dd x \right|\lesssim \delta_{q+1}\ell_q^{2\alpha},\label{p:energy-vv_q}\\
      \left| \int_{\mathbb T^3} (\vv_q\cdot\bb_q-v_{\ell_q}\cdot b_{\ell_q})\dd x \right|\lesssim \delta_{q+1}\ell_q^{2\alpha}.\label{p:crossenergy-vv_q}
\end{align}
\end{prop}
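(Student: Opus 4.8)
The plan is to express the glued quantities $(\vv_q,\bb_q)$ as $L^2$-orthogonal-type decompositions into the mollified functions $(v_{\ell_q},b_{\ell_q})$ plus the fluctuation $(\vv_q-v_{\ell_q},\bb_q-b_{\ell_q})$, and then bound the cross terms and the quadratic fluctuation terms using the already-established estimates in Proposition~\ref{estimate-vvq}. Concretely, for the energy gap I would write
\begin{align*}
\int_{\mathbb T^3}(|\vv_q|^2-|v_{\ell_q}|^2)\dd x
&=\int_{\mathbb T^3}|\vv_q-v_{\ell_q}|^2\dd x+2\int_{\mathbb T^3}v_{\ell_q}\cdot(\vv_q-v_{\ell_q})\dd x,
\end{align*}
and similarly for $|\bb_q|^2-|b_{\ell_q}|^2$. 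The first term is $\le\|\vv_q-v_{\ell_q}\|_{L^2}^2\lesssim\delta_{q+1}\ell_q^{2\alpha}$ by \eqref{e:stability-vv_q} (note $\delta_{q+1}\ell_q^{2\alpha}\le\delta_{q+1}^{1/2}\cdot\delta_{q+1}^{1/2}\ell_q^{2\alpha}$, and since $\delta_{q+1}\le 1$ for $q$ large this is controlled; more carefully $(\delta_{q+1}^{1/2}\ell_q^\alpha)^2=\delta_{q+1}\ell_q^{2\alpha}$ exactly, which is the desired bound). For the cross term I would use Cauchy--Schwarz: $|\int v_{\ell_q}\cdot(\vv_q-v_{\ell_q})|\le\|v_{\ell_q}\|_{L^2}\|\vv_q-v_{\ell_q}\|_{L^2}$; here $\|v_{\ell_q}\|_{L^2}\lesssim\|v_q\|_{L^2}\lesssim 1$ (bounded, from the inductive hypothesis \eqref{e:vq-C0} together with convergence of $\sum\delta_l$) and $\|\vv_q-v_{\ell_q}\|_{L^2}\lesssim\delta_{q+1}^{1/2}\ell_q^\alpha$, giving $\lesssim\delta_{q+1}^{1/2}\ell_q^\alpha$. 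This is \emph{worse} than $\delta_{q+1}\ell_q^{2\alpha}$, so the naive Cauchy--Schwarz on the cross term is not enough; instead one should exploit the better estimate \eqref{e:stability-vv_q-N} with $N=0$, namely $\|\vv_q-v_{\ell_q}\|_{L^2}\lesssim\tau_q\delta_{q+1}\ell_q^{-5/2+\alpha}$. Since $\tau_q=\ell_q^3$, this reads $\|\vv_q-v_{\ell_q}\|_{L^2}\lesssim\delta_{q+1}\ell_q^{1/2+\alpha}$, and then the cross term is $\lesssim\|v_{\ell_q}\|_{L^2}\cdot\delta_{q+1}\ell_q^{1/2+\alpha}\lesssim\delta_{q+1}\ell_q^{1/2+\alpha}\ll\delta_{q+1}\ell_q^{2\alpha}$ for $q$ large. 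Wait---I must be careful that \eqref{e:stability-vv_q-N} is stated for $N\ge 0$ and at $N=0$ it genuinely gives the $\tau_q\delta_{q+1}\ell_q^{-5/2+\alpha}$ bound, which is the one to use for the cross term; \eqref{e:stability-vv_q} is the crude $L^2$ bound used only for the quadratic term.

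So the key steps, in order, are: (i) algebraically expand $|\vv_q|^2-|v_{\ell_q}|^2$ (and the magnetic analogue) as a sum of a quadratic fluctuation term and a linear cross term, and likewise expand $\vv_q\cdot\bb_q-v_{\ell_q}\cdot b_{\ell_q}=(\vv_q-v_{\ell_q})\cdot(\bb_q-b_{\ell_q})+v_{\ell_q}\cdot(\bb_q-b_{\ell_q})+(\vv_q-v_{\ell_q})\cdot b_{\ell_q}$; (ii) bound the quadratic fluctuation terms by $\|\vv_q-v_{\ell_q}\|_{L^2}^2+\|\bb_q-b_{\ell_q}\|_{L^2}^2\lesssim\delta_{q+1}\ell_q^{2\alpha}$ via \eqref{e:stability-vv_q}, and the mixed one by Cauchy--Schwarz with the same bound; (iii) bound each cross term by Cauchy--Schwarz, using $\|v_{\ell_q}\|_{L^2}+\|b_{\ell_q}\|_{L^2}\lesssim 1$ (from \eqref{e:vq-C0}, \eqref{e:v_ell-vq} and convergence of $\sum_l\delta_l$) together with the sharp fluctuation estimate \eqref{e:stability-vv_q-N} at $N=0$, which gives $\|\vv_q-v_{\ell_q}\|_{L^2}+\|\bb_q-b_{\ell_q}\|_{L^2}\lesssim\tau_q\delta_{q+1}\ell_q^{-5/2+\alpha}=\delta_{q+1}\ell_q^{1/2+\alpha}$; (iv) collect: the total is $\lesssim\delta_{q+1}\ell_q^{2\alpha}+\delta_{q+1}\ell_q^{1/2+\alpha}\lesssim\delta_{q+1}\ell_q^{2\alpha}$ since $\ell_q^{1/2+\alpha}\le\ell_q^{2\alpha}$ for $\alpha$ small and $\ell_q<1$. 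This yields both \eqref{p:energy-vv_q} and \eqref{p:crossenergy-vv_q}.

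The main obstacle---or rather the point requiring the most care---is making sure the cross terms, which superficially are only $O(\delta_{q+1}^{1/2})$ by the crude $L^2$ bound, actually benefit from the gain in \eqref{e:stability-vv_q-N}; this hinges on the relation $\tau_q=\ell_q^3$ turning $\tau_q\delta_{q+1}\ell_q^{-5/2+\alpha}$ into $\delta_{q+1}\ell_q^{1/2+\alpha}$, which is indeed $\ll\delta_{q+1}\ell_q^{2\alpha}$. A secondary technical point is justifying the uniform bound $\|v_{\ell_q}\|_{L^2}+\|b_{\ell_q}\|_{L^2}\lesssim 1$: by Young's inequality for convolutions $\|v_{\ell_q}\|_{L^2}\le\|v_q\|_{L^2}\le C_0\sum_{l=1}^q\delta_l^{1/2}\le C_0\sum_{l=1}^\infty\delta_l^{1/2}$, which is finite by the definition of $\delta_l$, so the implicit constant is genuinely universal and absorbed into $\lesssim$. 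No mollification or heat-semigroup estimates beyond those already proved are needed; the whole proof is a two-line expansion plus Cauchy--Schwarz, with the parameter bookkeeping being the only subtlety.
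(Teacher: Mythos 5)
Your decomposition and the resulting bounds are correct, and since the paper states this proposition without proof I can only compare against the route its surrounding machinery suggests. The standard argument (from \cite{zbMATH07038033}, and the reason the paper records Proposition~\ref{estimate-1} on $\|\mathcal R(\vex_l-\vex_{l+1})\|_{W^{N,1}}$) handles the troublesome cross term $\int v_{\ell_q}\cdot(\vv_q-v_{\ell_q})\dd x$ by writing the zero-mean, divergence-free difference as $\div\mathcal R(\cdot)$ and integrating by parts, paying $\|\mathcal R(\vv_q-v_{\ell_q})\|_{L^1}\|\nabla v_{\ell_q}\|_{L^\infty}$. You instead exploit that in this paper the glued difference already satisfies the strong $L^2$ stability bound \eqref{e:stability-vv_q-N} at $N=0$, namely $\|\vv_q-v_{\ell_q}\|_{L^2}\lesssim\tau_q\delta_{q+1}\ell_q^{-5/2+\alpha}=\delta_{q+1}\ell_q^{1/2+\alpha}$, so plain Cauchy--Schwarz suffices; the quadratic terms are handled identically in both routes via \eqref{e:stability-vv_q}. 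Your shortcut is legitimate given the estimates as stated and avoids the inverse-divergence operator entirely; the integration-by-parts route is the one that survives in settings where only an $O(\delta_{q+1}^{1/2})$ amplitude bound on $\vv_q-v_{\ell_q}$ is available.

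One point to tighten: your claim that $\|v_{\ell_q}\|_{L^2}+\|b_{\ell_q}\|_{L^2}\lesssim 1$ with a \emph{universal} constant is not quite right, since $C_0\sum_{l\ge1}\delta_l^{1/2}$ is dominated by $\delta_1^{1/2}+\delta_2^{1/2}$, which are large and depend on $a$ (the paper explicitly notes $\delta_1,\delta_2$ are chosen large to dominate the data). This does not break the proof: the cross term is then $\lesssim(\delta_1^{1/2}+\delta_2^{1/2})\,\delta_{q+1}\ell_q^{1/2+\alpha}$, and since $\delta_1^{1/2}+\delta_2^{1/2}=a^{O(\beta b^2)}$ while $\ell_q^{1/2-\alpha}\le\ell_1^{1/2-\alpha}=a^{-3b+6b\alpha}$, the product $(\delta_1^{1/2}+\delta_2^{1/2})\ell_q^{1/2-\alpha}\le1$ for $a$ large, so the cross term is still $\ll\delta_{q+1}\ell_q^{2\alpha}$. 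You should make this absorption explicit rather than appealing to a universal constant.
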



\subsection{Cutoffs}
\label{ss:defn-of-wq+1}
\subsubsection{Space-time cutoffs}
\label{sss:squiggling-cutoffs}
{To control the energy without impacting the initial data, we construct the following space-time cutoffs $\eta_l$ in this section.} We define the index
\[ N^0_q \coloneq  \bigg\lfloor \frac1{\tau_q} \bigg\rfloor - 2\in\mathbb \mathbb{N}^+, \]
and denote by $\{\eta_l\}_{l\geq1}$ the cutoffs such that:
\begin{align}
    \eta_l (x,t) \coloneq \begin{cases}
 \bar\eta_l(t) & 1\le l< N^0_q, \\
 \tilde\eta_l(x,t) &  N^0_q\le l\le N_q.
 \end{cases} \label{e:defn-cutoff-total}
\end{align}
We define $\bar\eta_l$ as in \cite{Isett,2018Wild} as follows: Let $\bar\eta_1\in C_c^\infty(J_1\cup I_1 \cup J_2;[0,1])$  satisfy
\[  \text{supp}~\bar\eta_1 = I_1 + \Big[  -\frac{\tau_q}6,\ \frac{\tau_q}6\Big] = \Big[  \frac{7\tau_q}6,\ \frac{11\tau _q}6\Big] , \]
be identically 1 on $I_1$, and possess the following estimates for $N\ge 0$:
\[ \|\del_t ^N \bar\eta_1\|_{C^0_t} \lesssim \tau_q^{-N}. \]
We set $\bar\eta_l(t)\coloneq\bar\eta_1(t-t_{l-1})$ for $1\le l< N^0_q$.

Next, we give the definition of $\tilde\eta_l(t,x)$ as in \cite{zbMATH07038033,KMY}. Let $\epsilon \in (0,1/3)$,  $\epsilon_0\ll 1$. For $N_q^0\le l\le N_q$, letting $\phi$ be a bump function such that ${\rm supp}\,\phi\subset [-1,1]$ and $\phi=1$ in $[-\tfrac12,\tfrac12]$, we define
\begin{align*}
     I_{l}'&\coloneq I_l + \Big[-\frac{(1-\epsilon)\tau_q}3, \frac{(1-\epsilon)\tau_q}3\Big] = \Big[l\tau_q+\frac{\epsilon\tau_q}3, l\tau_q+ \frac{(3-\epsilon)\tau_q}3\Big]  ,\\
         I_l'' &\coloneq \Big\{\big(x,t+\frac{2\epsilon\tau_q}3\sin (2\pi x_1) \big) : x\in\mathbb T^3, \ t \in I_l'\Big\}\subset  \mathbb T^3 \times \mathbb R,   \\
         \tilde \eta_l (x,t)&\coloneq \frac1{\epsilon_0^3}\frac1{\epsilon_0 \tau_q} \iint_{I_l''}\phi\Big(\frac{|x-y|}{\epsilon_0}\Big)\phi\Big(\frac{|t-s|}{\epsilon_0\tau_q}\Big)\dd y \dd s.
\end{align*}
\begin{figure}[H]
\begin{center}
\includegraphics[scale=0.65]{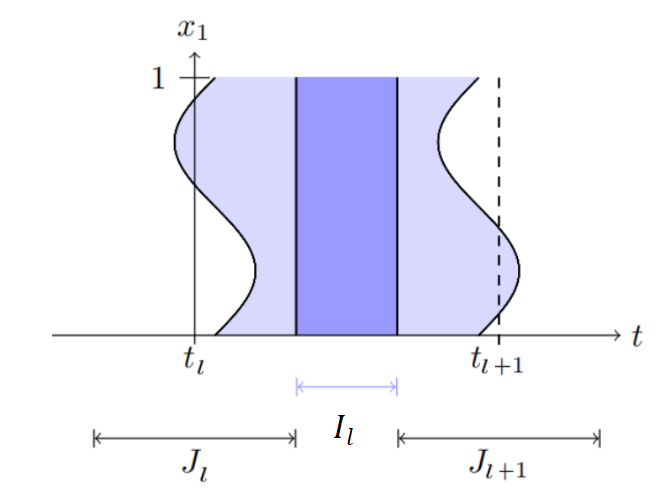}
\caption{The support of a single $\tilde\eta_l$. For each time $t\in[t_l,t_{l+1}]$, the integral $\int_0^1 \tilde\eta_l \dd x_1>c_\eta\approx 1/4$. Furthermore, $\{\text{{\rm supp} }\tilde\eta_l\}_{l\ge N^0_q}$ are pairwise disjoint sets. Figure from \cite{KMY}.}
\end{center}
\end{figure}
From the above discussion, we can obtain  the following lemma:
\begin{lem}[\cite{zbMATH07038033,KMY}] \label{eta}
For all $l=1,\dots,N_q$, The functions $\{\eta_l\}_{l\geq1}$ satisfy
    \begin{enumerate}
    \item $\eta_l \in C^\infty_c(\mathbb T^3 \times (J_{l}\cup I_{l}\cup J_{l+1}) ; [0,1])$ with:
        \begin{align}
         \|\del^n_t  \eta_l\|_{L^\infty_tC^m_x    }  \lesssim_{n,m} \tau_q^{-n},~~~n,m\ge 0. \label{e:eta-estimates}
    \end{align}
    \item $\eta_l(\cdot,t) \equiv 1$ for $t\in I_l$.
    \item $\rm{supp}~\eta_{l} \cap {\rm supp} ~\eta_j=\emptyset$ if $l\neq j$.
    \item  For all $t\in[t_{N^0_q},T]$, we have
    $$ c_\eta \le \sum_{l=0}^{N_q}\int_{\mathbb T^3}  \eta_l^2(x,t) \dd x \le 1$$
     for a fixed positive constant $c_\eta\approx\frac{1}{4}$ independent of $q$.
    \item For all $1\le     l<N^0_q$, $\eta_l $ only depends on $t$, and {\rm supp}~$\eta_l\subset  [  \tfrac{7\tau_q}6+(l-1)\tau_q,  \tfrac{11\tau _q}6+(l-1)\tau_q  ]$ .
\end{enumerate}
\end{lem}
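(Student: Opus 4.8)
\emph{The temporal cutoffs.} I would verify the five properties in the two regimes separately. For $1\le l<N^0_q$, where $\eta_l=\bar\eta_l(t)$ is purely temporal, everything follows from the defining relation $\bar\eta_l(t)=\bar\eta_1(t-t_{l-1})$ and the stated properties of $\bar\eta_1$: its support $[\tfrac{7\tau_q}6+(l-1)\tau_q,\tfrac{11\tau_q}6+(l-1)\tau_q]$ has length $\tfrac{2\tau_q}3<\tau_q$, so the supports of the temporal cutoffs, being translates of one another by integer multiples of $\tau_q$, are pairwise disjoint, which is item~(3) for this range; a direct inclusion check puts this interval inside $J_l\cup I_l\cup J_{l+1}$, translating ``$\bar\eta_1\equiv1$ on $I_1$'' gives item~(2), item~(5) merely restates the support, and $\|\partial_t^N\bar\eta_l\|_{C^0_t}=\|\partial_t^N\bar\eta_1\|_{C^0_t}\lesssim_N\tau_q^{-N}$ (there being no $x$-dependence) is the estimate in item~(1). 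The construction is that of \cite{zbMATH07038033,KMY}, and I expect the one delicate point to be item~(3) for the squiggled cutoffs.

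\emph{The squiggled cutoffs: items (1)--(2).} For $N^0_q\le l\le N_q$ I would start from the identity $\mathbf 1_{I_l''}(y,s)=\mathbf 1_{I_l'}\!\big(s-\tfrac{2\epsilon\tau_q}3\sin(2\pi y_1)\big)$, which presents $\tilde\eta_l$ as the convolution of a $[0,1]$-valued function with a fixed, unit-mass bump kernel at scales $\epsilon_0$ in $x$ and $\epsilon_0\tau_q$ in $t$. Smoothness and the range $[0,1]$ are then immediate, and differentiation falls only on the kernel, so each $\partial_t$ costs $(\epsilon_0\tau_q)^{-1}$ and each $\partial_x$ costs $\epsilon_0^{-1}$; since $\epsilon_0$ is a fixed constant this gives $\|\partial_t^n\eta_l\|_{L^\infty_tC^m_x}\lesssim_{n,m}\tau_q^{-n}$. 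Because $\sin\in[-1,1]$, the time-projection of $I_l''$ is $[l\tau_q-\tfrac{\epsilon\tau_q}3,(l+1)\tau_q+\tfrac{\epsilon\tau_q}3]$; fattening by $\epsilon_0\tau_q$ and using $\epsilon<\tfrac13$, $\epsilon_0\ll1$ places $\mathrm{supp}\,\tilde\eta_l$ in $\mathbb T^3\times(J_l\cup I_l\cup J_{l+1})$, completing item~(1). For item~(2): if $t\in I_l$, then for every $y_1$ and every $s$ with $|s-t|\le\epsilon_0\tau_q$ one has $s-\tfrac{2\epsilon\tau_q}3\sin(2\pi y_1)\in I_l'$, because $I_l'=I_l+[-\tfrac{(1-\epsilon)\tau_q}3,\tfrac{(1-\epsilon)\tau_q}3]$ and $\tfrac{2\epsilon\tau_q}3+\epsilon_0\tau_q<\tfrac{(1-\epsilon)\tau_q}3$ once $3\epsilon+3\epsilon_0<1$; hence the kernel integrates a function that is identically $1$ on its support, so $\tilde\eta_l(x,t)=1$.

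\emph{The squiggled cutoffs: items (3)--(4).} When $|l-j|\ge2$ the time-projections of $I_l''$ and $I_j''$ are disjoint, and the analogous comparison of time-projections shows $\mathrm{supp}\,\tilde\eta_l$ is disjoint from every $\mathrm{supp}\,\bar\eta_j$ (the closest such pair, $\mathrm{supp}\,\bar\eta_{N^0_q-1}$, which ends at $t_{N^0_q}-\tfrac{\tau_q}6$, against the time-projection of $\mathrm{supp}\,\tilde\eta_{N^0_q}$, which begins only at $t_{N^0_q}-\tfrac{\epsilon\tau_q}3-\epsilon_0\tau_q$, being disjoint once $\tfrac{\epsilon}3+\epsilon_0<\tfrac16$). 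The remaining case $j=l+1$ is the heart of the matter: the $\epsilon_0\tau_q$-fattened time-projections of $I_l''$ and $I_{l+1}''$ overlap only in a window of width $O((\epsilon+\epsilon_0)\tau_q)$ centered at $t_{l+1}$, and were $(x,t)\in\mathrm{supp}\,\tilde\eta_l\cap\mathrm{supp}\,\tilde\eta_{l+1}$ with $t$ in this window, there would exist $(y,s)\in I_l''$ and $(y',s')\in I_{l+1}''$ with $|y-y'|\le2\epsilon_0$ and $|s-s'|\le2\epsilon_0\tau_q$; near $t_{l+1}$, membership in $I_l''$ is controlled by the right endpoint of $I_l'$ and forces $\sin(2\pi y_1)\ge\tfrac{3(s-t_{l+1})}{2\epsilon\tau_q}+\tfrac12$, while membership in $I_{l+1}''$ is controlled by the left endpoint of $I_{l+1}'$ and forces $\sin(2\pi y_1')\le\tfrac{3(s'-t_{l+1})}{2\epsilon\tau_q}-\tfrac12$, so that $\sin(2\pi y_1)-\sin(2\pi y_1')\ge1-\tfrac{3\epsilon_0}{\epsilon}$, contradicting $|\sin(2\pi y_1)-\sin(2\pi y_1')|\le2\pi|y_1-y_1'|\le4\pi\epsilon_0$ once $\epsilon_0\ll\epsilon$. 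For item~(4), pairwise disjointness and $0\le\eta_l\le1$ give $\sum_l\eta_l^2\le1$ pointwise, hence the upper bound; for the lower bound, given $t\in[t_{N^0_q},T]$ pick $l$ with $l\tau_q\le t<(l+1)\tau_q$ (so $N^0_q\le l\le N_q$) and put $u=t-l\tau_q$, so that
\[
\{x:(x,t)\in I_l''\}=\Big\{x:\ \tfrac12-\tfrac{3(\tau_q-u)}{2\epsilon\tau_q}\le\sin(2\pi x_1)\le\tfrac{3u}{2\epsilon\tau_q}-\tfrac12\Big\};
\]
the Lebesgue measure of this set is $\ge\tfrac13$ for every $u\in[0,\tau_q]$ — the extremes $u=0,\tau_q$ reduce it to $\{\sin(2\pi x_1)\le-\tfrac12\}$, respectively $\{\sin(2\pi x_1)\ge\tfrac12\}$, each of measure $\tfrac13$, and intermediate $u$ only enlarges it, up to the whole torus — so, after accounting for the $\epsilon_0$-mollification, $\int_{\mathbb T^3}\tilde\eta_l^2(x,t)\,\dd x\ge\tfrac13-C\epsilon_0\ge\tfrac14\eqcolon c_\eta$. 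All the quantitative inequalities among $\epsilon$, $\epsilon_0$ and the $\tau_q$-scales used above hold with room to spare, since $\epsilon<\tfrac13$ and $\epsilon_0\ll1$ are chosen before $a$, hence before $\tau_q$, and no circularity arises.
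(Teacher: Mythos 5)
Your verification is correct, and it takes the only route available: the paper gives no proof of this lemma (it is imported from \cite{zbMATH07038033,KMY} and asserted to follow ``from the above discussion''), so a direct check of the explicit construction of $\bar\eta_l$ and $\tilde\eta_l$ --- including the one genuinely delicate point, the disjointness of $\mathrm{supp}\,\tilde\eta_l$ and $\mathrm{supp}\,\tilde\eta_{l+1}$ via the two sine inequalities at the shared endpoint $t_{l+1}$ --- is exactly what is required, and all your endpoint computations and smallness conditions on $\epsilon,\epsilon_0$ check out. Two cosmetic remarks only: in item (4) the phrase ``intermediate $u$ only enlarges it'' should be read as a statement about measure rather than set inclusion (the slice migrates from $\{\sin(2\pi x_1)\le -\tfrac12\}$ through the whole torus to $\{\sin(2\pi x_1)\ge \tfrac12\}$ as $u$ runs over $[0,\tau_q]$), and the mollification loss is more honestly $O(\epsilon_0/\epsilon)$ than $O(\epsilon_0)$ --- both harmless since $\epsilon<\tfrac13$ is a fixed constant and $\epsilon_0\ll\epsilon$.
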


\subsection{Helicity gap}
\label{ss:energy-gap}
First, for $t\in[0,T]$ we set new helicity gap such that
\begin{align}
h_q(t) \coloneq   \frac{1}3\Big(h(t) - \int_{\mathbb T^3} \vv_q\cdot\bb_q \dd x - \frac{\delta_{q+2}}{200} \Big). \label{h0:helicity-gap}
\end{align}
We deduce by Proposition \ref{p:estimates-for-mollified} and Proposition \ref{p:energy-of-vv_q} that  $h_q(t)$ is strictly positive in $[1-\tau_{q-1},T]$ and satisfies
\begin{align}\label{xingaps}
\tfrac{1}{400}\delta_{q+1}\leq 3h_q(t) \leq \tfrac{1}{90}\delta_{q+1},~~\forall t\in [1-\tau_{q-1},T].
\end{align}

Next, we define a function $\eta_{-1}:=\eta_{-1}(t)\in C_c^\infty([0,t_{N^0_q+1});[   0,1])$ such that
\begin{align}
\eta_{-1} \equiv 1,~~~0\le t\le t_{N^0_q},  \label{e:eta-1 property=1}
\end{align}
and  satisfies
\begin{align}
   \sup_t |\del_t^N \eta_{-1}(t)| \lesssim \tau_q^{-N},\quad~ N\ge 0. \label{e:eta-1-estimate}
\end{align}
Note that $t_{N^0_q+1}=\tau_q (\lfloor \tau_q^{-1}\rfloor -1) \le 1-\tau_q$, then $t\ge 1-\tau_q$ implies $\eta_{-1}(t)=0$.

The following figure shows the supports relationship between $\eta_{l}$ and $\eta_{-1}$ visually:
\begin{figure}[H]
\begin{center}
\includegraphics[scale=0.6]{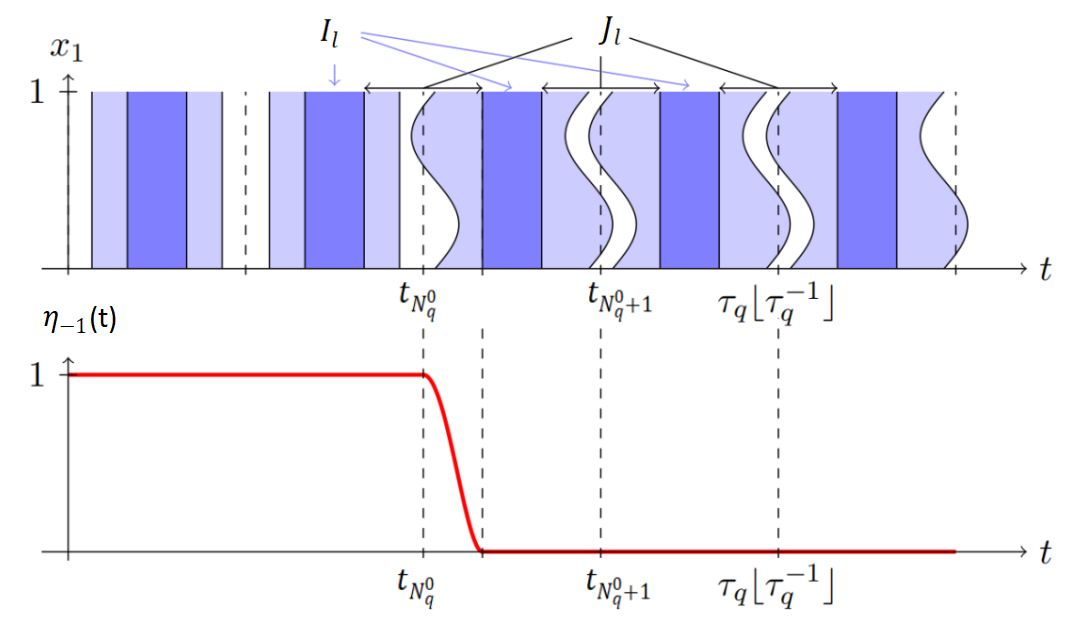}
\caption{ At time $t_{N^0_q}$, we switch from using straight cutoffs ${\bar{\eta}_l}$
to the squiggling cutoffs $\tilde\eta_l$. At time $t_{N^0_q+1} = \tau_q(
\big\lfloor  {\tau_q}^{-1}\big\rfloor-1) \leq 1-\tau_q  $, we start to control the helicity. Figure from \cite{KMY}.}
\end{center}
\end{figure}
Then, we modify the energy gap $h_q(t)$ by setting
\begin{align}
  h_{b,q} (t) &\coloneq \tfrac{\delta_{q+1}}{400}\aleph(t)+\tfrac{ h_q(t)(1-\aleph(t))}{ \eta_{-1}(t) + \sum_{l= 1}^{N_q} \int_{\mathbb T^3}\eta_l^2 ( x,t)\dd  x },  \label{h:energy-q-i-gap}
\end{align}
where $\aleph$ is a smooth cut-off function that is equal to $1$ for $t<1- \tau_{q-1}$ and $0$ for $t>1-\tau_q$ and satisfies $\del_t^N \aleph \lesssim \tau_{q-1}^{-N}\ll\tau_q^{-N}$. Using Lemma \ref{eta}, one can easily deduce that $\frac{\delta_{q+1}}{2400}\leq h_{b,q}(t)\leq \frac{1}{50}\delta_{q+1}$ for each $ t\in[0,T],$ which implies that $h_{b,q}$ is not much different from the original helicity gap in \eqref{e:energy-h-estimate}.

\section{Perturbation}
In this section, we construct the  perturbation $(w_{q+1}, d_{q+1})$ to iterate $(\vv_q,\bb_q)\rightarrow (v_{q+1},b_{q+1})$, which is the key ingredient of this paper.

\subsection{Stresses associated with the MHD system}
Let $(v_{q+1},b_{q+1})=(\vv_q+w_{q+1},\bb_q+d_{q+1})$. 
From \eqref{e:eqn-for-vex_i-v_ell}, we obtain the following MHD system with new Reynolds and magnetic stresses $(\RR_{q+1},\MM_{q+1})$:
\begin{equation}
\left\{ \begin{alignedat}{-1}
\del_t v_{q+1}-\Delta v_{q+1} + \div(v_{q+1} \otimes v_{q+1}-b_{q+1} \otimes b_{q+1})
     & = \div\RR_{q+1}-\nabla p_{q+1},\notag
 \\
 \del_t b_{q+1}-\Delta b_{q+1} + \div(v_{q+1} \otimes b_{q+1}-b_{q+1} \otimes v_{q+1})
     & = \div\MM_{q+1},\notag
  \\ (v_{q+1}, b_{q+1})|_{t=0}&=(\vin* \psi_{\ell_{q}},\bin* \psi_{\ell_{q}}),
\end{alignedat}\right.  \label{vq+1:subsol-mhd}
\end{equation}
where
\begin{align*}
p_{q+1}\coloneq& \pp_q(x,t)-\tfrac{1}{3}\tr[w_{q+1}  \otimes \vv_q+ \vv_q \otimes w_{q+1}-d_{q+1}  \otimes \bb_q- \bb_q \otimes d_{q+1}] - P_v ,\\
\RR_{q+1}=&\mathcal R[ \del_t w_{q+1}-\Delta w_{q+1}]+ [w_{q+1}  \mathring\otimes \vv_q+ \vv_q \mathring\otimes w_{q+1}-d_{q+1}  \mathring\otimes \bb_q- \bb_q \mathring\otimes d_{q+1}]  \\
&+\mathcal R[  \div (w_{q+1}\otimes w_{q+1}-d_{q+1}\otimes d_{q+1})+\div\RRR_q-  \nabla P_v]\notag\\
:=&\Rlinear+\Rosc,\\
\MM_{q+1}=&\mathcal{R}_a[ \del_t d_{q+1}-\Delta d_{q+1}]+ [w_{q+1}  \otimes \bb_q+ \vv_q \otimes d_{q+1}- \bb_q \otimes w_{q+1}-d_{q+1}  \otimes \vv_q]  \notag\\
&+\mathcal{R}_a\mathbb{P}_H[\  \div (w_{q+1}\otimes d_{q+1}-d_{q+1}\otimes w_{q+1})+\div\MMM_q] \\
:=&\Mlinear+\Mosc.
\end{align*}
Here we have used the fact that $\mathbb{P}_H\div A=\div A$ for any anti-symmetric matrix $A$. The definition of $P_v$ can be seen in \eqref{Pu}.

Before introducing the perturbation $(w_{q+1},d_{q+1})$, we provide two useful tools: ``geometric lemmas'' and ``box flows''.

\subsection{Two geometric lemmas}
\begin{lem}[First Geometric Lemma\cite{2Beekie}]\label{first L}
 Let $B_{\epsilon_b}(0)$ be a ball of radius $\epsilon_b$ centered at $0$ in the space of $3\times3$ skew-symmetric matrices. There exists a set $\Lambda_b\subset\mathcal{S}^2\cap\mathbb{Q}^3$ that consists of vectors $k$ with associated orthogonal basis $(k,\bar{k},\bar{\bar{k}}),~\epsilon_b>0$ and smooth positive functions $a_{b,k}:B_{\epsilon_b}(0)\rightarrow\mathbb{R}$, such that for $M \in B_{\epsilon_b}(0)$
$$M =\sum_{k\in\Lambda_b}a_{b,k}(M )(\bar{k}\otimes\bar{\bar{k}}-
\bar{\bar{k}}\otimes\bar{k}).$$
\end{lem}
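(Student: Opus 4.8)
The plan is to construct $\Lambda_b$, the basis vectors, the radius $\epsilon_b$, and the coefficient functions $a_{b,k}$ explicitly, then verify the decomposition identity. First I would recall the standard fact that the space of $3\times 3$ skew-symmetric matrices is three-dimensional, with basis $\{E_1, E_2, E_3\}$ where $E_i$ corresponds to the cross product with $e_i$; equivalently, via the hat map $\hat{\cdot}: \mathbb{R}^3 \to \mathfrak{so}(3)$, every skew-symmetric $M$ is $\hat{\omega}$ for a unique $\omega \in \mathbb{R}^3$, and the building block $\bar k \otimes \bar{\bar k} - \bar{\bar k} \otimes \bar k$ is precisely $\widehat{\bar k \times \bar{\bar k}} = \hat{k}$ (up to sign, using that $(k,\bar k,\bar{\bar k})$ is an orthonormal oriented basis, so $\bar k \times \bar{\bar k} = k$). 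Thus the claimed identity $M = \sum_{k \in \Lambda_b} a_{b,k}(M)\,\hat{k}$ reduces, under the hat map, to writing the vector $\omega$ associated to $M$ as a non-negative-coefficient combination $\omega = \sum_{k\in\Lambda_b} a_{b,k}(M)\, k$ of the rational unit vectors in $\Lambda_b$.

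Next I would choose $\Lambda_b$ to be a finite set of rational unit vectors whose positive span is all of $\mathbb{R}^3$ — for instance, a symmetric set containing vectors close to $\pm e_1, \pm e_2, \pm e_3$ with rational coordinates (such vectors exist because $\mathcal{S}^2 \cap \mathbb{Q}^3$ is dense in $\mathcal{S}^2$), and then discard one of each antipodal pair is \emph{not} done — we keep enough vectors so that $0$ lies in the interior of their convex hull. For each such $k$ I fix, once and for all, a completion to an orthonormal oriented basis $(k, \bar k, \bar{\bar k})$; since $k \in \mathbb{Q}^3$ one can take $\bar k, \bar{\bar k} \in \mathbb{Q}^3$ as well after normalizing (rational Gram–Schmidt), though rationality of $\bar k, \bar{\bar k}$ is not strictly needed for this lemma, only later. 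Because $0$ is in the interior of the convex hull of $\Lambda_b$, there is $\epsilon_b > 0$ and a smooth (indeed one can arrange real-analytic, or just smooth via a partition-of-unity argument) choice of non-negative coefficients $a_{b,k}(\omega)$ with $\omega = \sum_k a_{b,k}(\omega) k$ and $a_{b,k}(\omega) > 0$ for all $k$ whenever $|\omega| < \epsilon_b$; one standard device is to write $a_{b,k}(\omega) = c_k + \langle \omega, v_k\rangle$ for suitable fixed vectors $v_k$ and constants $c_k > 0$ chosen so that $\sum_k c_k k = 0$, $\sum_k v_k \otimes k = \mathrm{Id}$, and $c_k$ dominates $|\langle \omega, v_k\rangle|$ on the small ball — these linear-algebra conditions are solvable precisely because the positive span is full-dimensional. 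Translating $\omega = \hat{\cdot}^{-1}(M)$ back through the hat map (which is linear, hence smoothness is preserved) gives the stated conclusion with $a_{b,k}(M) := a_{b,k}(\hat{\cdot}^{-1}(M))$.

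The verification step is then routine: the map $M \mapsto \omega$ is a linear isometry of normed spaces for appropriate choices of norm, so $M \in B_{\epsilon_b}(0)$ corresponds to $|\omega| \le \epsilon_b$ (after possibly rescaling $\epsilon_b$), the functions $a_{b,k}$ are smooth and positive there, and $\sum_{k} a_{b,k}(M)(\bar k \otimes \bar{\bar k} - \bar{\bar k} \otimes \bar k) = \sum_k a_{b,k}(M) \hat k = \widehat{\sum_k a_{b,k}(M) k} = \hat\omega = M$. I expect the main obstacle — or rather the only point requiring care — to be the simultaneous demand of \emph{rationality} of the $k$'s (needed so the associated box/Mikado-type flows are periodic on $\mathbb{T}^3$) together with a \emph{smooth positive} coefficient selection valid on a full neighborhood of $0$: one must check that a rational set with $0$ in the interior of its convex hull actually exists and supports the affine-linear ansatz for $a_{b,k}$, but this is a finite-dimensional density/linear-algebra fact and, as the citation to \cite{2Beekie} indicates, is handled exactly as in the incompressible Euler/MHD convex integration literature. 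The whole argument is essentially a coordinate-free repackaging of the classical "first geometric lemma" specialized to skew-symmetric $3\times 3$ matrices via the isomorphism $\mathfrak{so}(3) \cong \mathbb{R}^3$.
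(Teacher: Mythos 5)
Your proposal is correct and is essentially the standard argument behind this lemma: the paper itself gives no proof (it cites \cite{2Beekie} and merely lists an explicit $\Lambda_b$ with its associated triples in Appendix A.2), and the proof in that reference proceeds exactly as you describe, via the identification $\mathfrak{so}(3)\cong\mathbb{R}^3$ under which $\bar k\otimes\bar{\bar k}-\bar{\bar k}\otimes\bar k=\pm\hat k$, a finite set of rational directions positively spanning $\mathbb{R}^3$, and an affine-linear choice of strictly positive coefficients on a small ball. Your handling of the two delicate points (the sign/orientation of each triple, and the compatibility of rationality with a smooth positive selection) is accurate, so nothing further is needed.
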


\begin{lem}[Second Geometric Lemma\cite{2Beekie}]\label{first S}
 Let $B_{\epsilon_v}(\rm Id)$ be a ball of radius $\epsilon_v$ centered at $\rm Id$ in the space of $3\times3$ symmetric matrices. There exists a set ~$\Lambda_v\subset\mathcal{S}^2\cap\mathbb{Q}^3$ that consists of vectors $k$ with associated orthogonal basis $(k,\bar{k},\bar{\bar{k}}),~\epsilon_v>0$ and smooth positive functions $a_{v,k}:B_{\epsilon_v}(\rm Id)\rightarrow\mathbb{R}$,  such that for $R_u\in B_{\epsilon_v}(\rm Id)$
$$R=\sum_{k\in\Lambda_v}a_{v,k}(R)\bar{k}\otimes\bar{k}.$$
\end{lem}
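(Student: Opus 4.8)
\emph{Proof plan.} The statement is the classical ``geometric lemma'' underpinning the choice of principal flow directions, and I would prove it by the usual convexity argument (as in \cite{27DeLellis,2Beekie}), the only point demanding care being the rationality of the frames. Write $\mathcal S$ for the $6$-dimensional space of symmetric $3\times3$ matrices and $\mathcal S_+\subset\mathcal S$ for the cone of positive semidefinite ones, so that $\mathrm{Id}\in\operatorname{int}\mathcal S_+$. Every element of $\mathcal S_+$ is a finite nonnegative combination of rank-one projections $v\otimes v$ with $v\in\mathcal S^2$, so $\operatorname{cone}\{v\otimes v:v\in\mathcal S^2\}=\mathcal S_+$. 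First I would extract a finite list $v_1,\dots,v_m\in\mathcal S^2$ such that the matrices $v_j\otimes v_j$ span $\mathcal S$ and $\mathrm{Id}$ lies in the interior of $\operatorname{cone}(v_1\otimes v_1,\dots,v_m\otimes v_m)$: pick finitely many positive definite matrices near $\mathrm{Id}$ that span $\mathcal S$ and have $\mathrm{Id}$ in the interior of their convex hull, and decompose each of them into rank-one pieces $v\otimes v$. An elementary perturbation (subtract $\varepsilon\sum_j v_j\otimes v_j$, which for small $\varepsilon$ still lies in the open cone, re-decompose it there, and add $\varepsilon$ back to every coefficient) upgrades this to a representation $\mathrm{Id}=\sum_j\gamma_j\,v_j\otimes v_j$ with \emph{all} $\gamma_j>0$.

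Next I would turn the $v_j$ into rational unit vectors lying inside rational orthonormal frames. The rational orthogonal group $SO(3,\mathbb Q)$ is dense in $SO(3)$ (e.g.\ through rational quaternions), so for each $j$ there is $Q_j\in SO(3,\mathbb Q)$ with $Q_j e_2$ arbitrarily close to $v_j$, where $e_1,e_2,e_3$ denote the standard basis vectors; then $(k_j,\bar k_j,\bar{\bar k}_j)\coloneq(Q_je_1,Q_je_2,Q_je_3)$ is an orthonormal frame with all three vectors in $\mathcal S^2\cap\mathbb Q^3$ and $\bar k_j$ as near $v_j$ as we wish. The spanning property and the existence of a strictly positive solution $\gamma'$ of $\mathrm{Id}=\sum_j\gamma_j'\,\bar k_j\otimes\bar k_j$ are both \emph{open} conditions in $(\bar k_1,\dots,\bar k_m)$ — the first because nonvanishing of an appropriate $6\times6$ minor is open, the second because one may track a solution continuously and it stays positive near $(v_1,\dots,v_m)$ — so a sufficiently accurate rational approximation preserves both. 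I then set $\Lambda_v\coloneq\{k_1,\dots,k_m\}\subset\mathcal S^2\cap\mathbb Q^3$, each $k$ carrying the frame just built.

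Finally, for the smooth positive coefficient functions I would use a linear right inverse. Let $L:\mathbb R^{\Lambda_v}\to\mathcal S$, $L\big((c_k)_k\big)=\sum_{k\in\Lambda_v}c_k\,\bar k\otimes\bar k$; it is surjective by the spanning property, so it admits a linear right inverse $L^{-1}$. With $\mathrm{Id}=L\big((\gamma_k)_k\big)$, $\gamma_k>0$, from the previous step, I define
\[
a_{v,k}(R)\coloneq\gamma_k+\big(L^{-1}(R-\mathrm{Id})\big)_k,\qquad k\in\Lambda_v,
\]
which is affine, hence smooth, in $R$ and obeys $\sum_{k\in\Lambda_v}a_{v,k}(R)\,\bar k\otimes\bar k=\mathrm{Id}+(R-\mathrm{Id})=R$. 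Taking $\epsilon_v>0$ small enough that $\big\|L^{-1}(R-\mathrm{Id})\big\|_{\ell^\infty}<\min_k\gamma_k$ for all $R\in B_{\epsilon_v}(\mathrm{Id})$ makes each $a_{v,k}$ strictly positive there, which finishes the proof. The step I expect to be the main obstacle is the rationality upgrade of the second paragraph — making the entire frame $(k,\bar k,\bar{\bar k})$ rational (so the later box/intermittent flows stay periodic on $\mathbb T^3$) without destroying the positive decomposition of $\mathrm{Id}$ — since it hinges on combining density of $SO(3,\mathbb Q)$ in $SO(3)$ with the openness of the spanning and positivity conditions; the convexity input and the coefficient construction are routine. (Lemma~\ref{first L} is proved identically, with $\mathcal S,\mathcal S_+,\mathrm{Id}$ replaced by the space of skew-symmetric matrices, the cone generated by the rank-two matrices $\bar k\otimes\bar{\bar k}-\bar{\bar k}\otimes\bar k$, and $0$ — which lies in the interior of that cone once both signs of each generator are represented.)
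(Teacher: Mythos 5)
Your argument is correct, but it follows a different route from the paper. The paper does not prove the lemma by a density/openness argument at all: it takes the statement from \cite{2Beekie}, and in Appendix A.2 it simply records the explicit six-element set $\Lambda_v=\{\tfrac{12}{13}e_1\pm\tfrac{5}{13}e_2,\ \tfrac{5}{13}e_1\pm\tfrac{12}{13}e_3,\ \tfrac{12}{13}e_2\pm\tfrac{5}{13}e_3\}$ together with explicit rational orthonormal frames built from the Pythagorean triple $(5,12,13)$. In that approach one verifies by direct computation that the six matrices $\bar{k}\otimes\bar{k}$ form a basis of the six-dimensional space of symmetric matrices and that $\mathrm{Id}$ has strictly positive coordinates in this basis; the $a_{v,k}$ are then the (unique, affine, hence smooth) coordinate functionals, positive on a small ball by continuity. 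Your proof replaces the explicit construction with a soft existence argument: decompose $\mathrm{Id}$ inside the open cone of positive semidefinite matrices, then use density of $SO(3,\mathbb{Q})$ in $SO(3)$ together with openness of the spanning and strict-positivity conditions to rationalize the frames. Both are valid, and your treatment of the rationality upgrade (the step you flag as delicate) is sound. What the explicit version buys, and why the paper uses it, is that the concrete rational entries are consumed downstream: they determine $N_\Lambda$ with $N_\Lambda k, N_\Lambda\bar{k}, N_\Lambda\bar{\bar{k}}\in\mathbb{Z}^3$, they make it possible to arrange the pairwise disjoint supports of the box flows in \eqref{suppkk'}, and they keep $\Lambda_s$ from interacting with $\Lambda_v$ and $\Lambda_b$. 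One minor remark on your last step: with exactly six directions the map $L$ is a bijection and $L^{-1}$ is canonical, whereas your $m$ may exceed six, so you must fix a particular right inverse; this is harmless but should be stated.
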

To eliminate the helicity errors, we need another set $\Lambda_s$ which does not interact with $\Lambda_{v},\Lambda_{b}$, see Appendix \ref{Geo} for more details. For simplicity, we let $\Lambda:=\Lambda_{b}\cup\Lambda_{v}\cup\Lambda_{s}$.

\subsection{Box flows}
\label{ss:mikado}

In this section, let $\Phi:\mathbb{R}\rightarrow\mathbb{R}$ be a smooth cutoff function supported on the interval $[0,\tfrac{1}{8}]$. Assume that
$$\phi:=\frac{\dd^2}{\dd x^2}\Phi.$$
We define the stretching transformation as
$$r^{-\frac{1}{2}}\phi (N_{\Lambda}r^{-1}x),$$
where $r^{-1}$ is a positive integer number and $N_{\Lambda}$ is a large number such that $N_{\Lambda}{k},N_{\Lambda} \bar{k},N_{\Lambda}\bar{\bar{k}}\in \mathbb{Z}^3$. We periodize it as $\phi_r (x)$ on $[0,1]$.

Next, for lager positive integer numbers $r^{-1}, {\bar{r}}^{-1}, {\bar{\bar{r}}}^{-1}$, $\mu$ and $\sigma$, we set
$$\phi_{k}(x):=\phi_{r}(\sigma k\cdot x),~~\phi_{\bar{k}}(x,t):=\phi_{\bar{r}}( \sigma  \bar{k}\cdot x+\sigma \mu t),~~\phi_{\bar{\bar{k}}}(x):=\phi_{\bar{\bar{r}}}( \sigma \bar{\bar{k}}\cdot x ).$$
Then we define  a set of functions $\{\phi_{k,\bar{k},\bar{\bar{k}}} \}_{k\in\Lambda}:\TTT^3\times\R\to\R$ by
$$\phi_{k,\bar{k},\bar{\bar{k}}}(x,t)  := \phi_{k}(x-x_k)\phi_{\bar{k}}(x-x_k,t) \phi_{\bar{\bar{k}}}(x-x_k) ,~~k\in \Lambda, $$
where $x_k\in\R^3$ are shifts which guarantee that
\begin{equation}\label{suppkk'}
{\rm supp}~\phi_{k,\bar{k},\bar{\bar{k}}}\cap {\rm supp}~\phi_{k',\bar{k}',\bar{\bar{k}}'}=\emptyset,~~~{\rm if}~~k',k\in\Lambda,~k\neq k'.
  \end{equation}
There exist such shifts $x_k$ by the fact that $r,\bar{r}, \bar{\bar{r}}\ll 1$. \eqref{suppkk'} makes sense since $\phi_{k,\bar{k},\bar{\bar{k}}}$ supports in some small 3D boxes.  Readers can refer to \cite{1Cheskidov,zbMATH06710292,lyc} for this technique. In the rest of this paper, we still denote $\phi_{k}(x-x_k), \phi_{\bar{k}}(x-x_k,t)$ and $\phi_{\bar{\bar{k}}}(x-x_k)$ by $\phi_{k}(x)$, $\phi_{\bar{k}}(x,t)$ and $\phi_{\bar{\bar{k}}}(x)$, respectively.



Now, setting $\sigma=\lambda^{\frac{1}{128}}_{q+1},~{\mu}=\lambda^{\frac{17}{16}}_{q+1}
$ and $r=\bar{r}=\lambda_{q+1}^{-\frac{14}{16}},~\bar{\bar{r}}=\lambda_{q+1}^{-\frac{5}{16}}$.
 One can easily verify that $\phi_{k,\bar{k},\bar{\bar{k}}}$  has zero mean and we can deduce the following proposition:
\begin{prop}\label{guji1}
For $p\in[1,\infty]$,  we have
$$\|\phi_{k,\bar{k},\bar{\bar{k}}}\|_{L^p}\lesssim   {\lambda}^{\frac{14}{16}(1-\frac{2}{p})}_{q+1}
{\lambda}^{\frac{5}{16}(\frac{1}{2}-\frac{1}{p})}_{q+1},
~~~\big|{\rm{supp}}\,\phi_{k,\bar{k},\bar{\bar{k}}}\big|\approx \lambda^{-\frac{33}{16}}_{q+1}. $$
Moreover, we have $\|\phi_{k,\bar{k},\bar{\bar{k}}}\|_{L^2}=1$ after normalization.
\end{prop}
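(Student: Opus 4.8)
The plan is to exploit the tensor-product structure of $\phi_{k,\bar k,\bar{\bar k}}$ in the orthonormal frame $(k,\bar k,\bar{\bar k})$ and reduce every estimate to an elementary one-dimensional rescaling computation. Fix $t$ and, discarding the harmless shift $x_k$, write $\phi_{k,\bar k,\bar{\bar k}}(x,t)=\phi_r(\sigma k\cdot x)\,\phi_{\bar r}(\sigma\bar k\cdot x+\sigma\mu t)\,\phi_{\bar{\bar r}}(\sigma\bar{\bar k}\cdot x)$. First I would record that the linear map $x\mapsto(k\cdot x,\bar k\cdot x,\bar{\bar k}\cdot x)=:y$ descends to a measure-preserving automorphism of $\TTT^3$: since $(k,\bar k,\bar{\bar k})$ is orthonormal it has determinant $\pm1$, and since $N_\Lambda k,N_\Lambda\bar k,N_\Lambda\bar{\bar k}\in\mathbb Z^3$ and $\sigma,\mu,r^{-1},\bar r^{-1},\bar{\bar r}^{-1}\in\mathbb N$ (guaranteed by the divisibility built into $b$, and hence into $\lambda_{q+1}$ and the parameters $\sigma,\mu,r,\bar{\bar r}$), the three factors each descend to $\TTT^3$ and, in the new coordinates, depend only on $y_1$, on $y_2$, and on $y_3$ respectively. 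Consequently every $L^p(\TTT^3)$ norm, as well as the Lebesgue measure of the support, factors as a product of the corresponding one-dimensional quantities, uniformly in $t$.

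Then I would carry out the one-dimensional computations. For $\rho\in\{r,\bar r,\bar{\bar r}\}$ the periodized profile $\phi_\rho$ is on $\TTT$ a bump of height $\approx\rho^{-1/2}$ and width $\approx\rho$; the substitution $u=N_\Lambda\rho^{-1}x$ gives $\|\phi_\rho\|_{L^p(\TTT)}^p=\rho^{1-p/2}N_\Lambda^{-1}\|\phi\|_{L^p}^p$, hence $\|\phi_\rho\|_{L^p(\TTT)}\approx\rho^{1/p-1/2}$ for every $p\in[1,\infty]$, and $|{\rm supp}\,\phi_\rho|\approx\rho$; moreover $\|\phi_\rho\|_{L^2(\TTT)}^2=N_\Lambda^{-1}\|\phi\|_{L^2}^2$ is independent of $\rho$, so a single normalization of $\phi$ makes it equal to $1$. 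Since $\sigma\in\mathbb N$ and a translation on $\TTT$ does not change $L^p$ norms or the measure of the support, the $\sigma$-dilations and the time shift $\sigma\mu t$ have no effect, i.e. $\|\phi_\rho(\sigma\,\cdot+c)\|_{L^p(\TTT)}=\|\phi_\rho\|_{L^p(\TTT)}$ and likewise for the support. Multiplying the three factors and inserting $r=\bar r=\lambda_{q+1}^{-14/16}$, $\bar{\bar r}=\lambda_{q+1}^{-5/16}$ gives $\|\phi_{k,\bar k,\bar{\bar k}}\|_{L^p}\lesssim(r\bar r)^{1/p-1/2}\,\bar{\bar r}^{\,1/p-1/2}=\lambda_{q+1}^{\frac{14}{16}(1-\frac2p)}\lambda_{q+1}^{\frac5{16}(\frac12-\frac1p)}$ after rewriting $2(\tfrac1p-\tfrac12)=-(1-\tfrac2p)$, and $|{\rm supp}\,\phi_{k,\bar k,\bar{\bar k}}|\approx r\bar r\bar{\bar r}=\lambda_{q+1}^{-33/16}$, while $\|\phi_{k,\bar k,\bar{\bar k}}\|_{L^2}=1$ after normalization. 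Finally, $\phi=\Phi''$ with $\Phi$ compactly supported, so $\int_{\mathbb R}\phi=0$; hence $\int_\TTT\phi_\rho=0$ and $\phi_{k,\bar k,\bar{\bar k}}$ has zero spatial mean for every $t$.

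The only genuinely non-routine point is the bookkeeping that turns composition with the (non-integer) direction vectors $k,\bar k,\bar{\bar k}$ into a legitimate change of variables on $\TTT^3$: one has to check that after the $\sigma$-dilation the relevant linear maps preserve $\mathbb Z^3$ — equivalently, that $\sigma k,\sigma\bar k,\sigma\bar{\bar k}$ together with the internal frequencies produce integer phases — which is precisely what the divisibility of $\lambda_{q+1},\sigma,\mu,r^{-1},\bar{\bar r}^{-1}$ provides, and that the shifts $x_k$ (needed only for the disjoint-support property \eqref{suppkk'}) do not interfere with the norm or measure computations. Once this is in place, the proposition follows from the elementary one-dimensional identities above.
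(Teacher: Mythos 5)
Your proof is correct and is exactly the routine verification the paper has in mind (the paper states the proposition without proof): reduce to the tensor-product structure in the frame $(k,\bar k,\bar{\bar k})$, compute one-dimensional $L^p$ norms of the rescaled bumps, and multiply. The only wording to tighten is the claim that $x\mapsto(k\cdot x,\bar k\cdot x,\bar{\bar k}\cdot x)$ "descends to a measure-preserving automorphism of $\mathbb T^3$" — the orthogonal matrix has non-integer rational entries, so it is the $\sigma$-dilated integer matrix that induces a (finite-to-one, measure-preserving in the pushforward sense) self-map of the torus, which is precisely the point you correctly isolate and justify in your final paragraph, so the factorization of the $L^p$ norms and of the support measure stands.
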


Finally, let $\Psi\in C^{\infty}(\mathbb{T})$ and $\psi=\frac{d^2}{dx^2}\Psi$. Set $\psi_k:=\psi(\lambda_{q+1}N_{\Lambda}kx)$ and normalize it such that $\|\psi_k\|_{L^2}=1$. One can deduce that
\begin{align}\label{wc0}
   \psi_k={\lambda^{-2}_{q+1}N^{-2}_{\Lambda}} \Delta[\Psi(\lambda_{q+1}N_{\Lambda}kx)]
   :={\lambda^{-2}_{q+1}N^{-2}_{\Lambda}}\Delta\Psi_k,~~k\in\Lambda.
\end{align}
It is easy to verify that $\psi_k
    \phi_{k,\bar{k},\bar{\bar{k}}}$ is also a $C^{\infty}_0(\mathbb{T}^3)$ function supported in 3D boxes, and we call $\psi_k
    \phi_{k,\bar{k},\bar{\bar{k}}}\bar{k}$ or $\psi_k
    \phi_{k,\bar{k},\bar{\bar{k}}}\bar{\bar{k}}$ \textbf{``box flows''} throughout this paper.

\subsection{Construction of perturbation}\label{flows}

Now, we introduce the following seven types of perturbation:\\
(1) To cancel the errors of the cross helicity, we construct a so-called ``helicity flow'' ($\whq$, $\dhq$). We are in position to define
\begin{align}
    \whq=\dhq &\coloneq \sum_{l;k\in\Lambda_{s}}  \eta_{l}h_{b,q}^{1/2}  \psi_k\phi_{k,\bar{k},\bar{\bar{k}}}\bar{\bar{k}} ,\label{e:defn-whq}
\end{align}
where  $h_{b,q}(t) $ comes from \eqref{h:energy-q-i-gap}.\\
(2) We aim to construct the principal corrector $(\wpq,\dpq)$ via geometric lemmas. Note that the geometric lemmas are valid for anti-symmetric matrices perturbed near $0$ matrix and symmetric matrices perturbed near $\rm Id$ matrix, we need the following smooth function $\chi$ introduced in \cite{2Beekie,luotianwen} such that the stresses are pointwise small.  More precisely, let $\chi:[0,\infty)\to \mathbb{R}^{+}$ be a smooth function satisfying
\begin{equation}
\chi(z)=\left\{ \begin{alignedat}{-1}
&1, \quad 0\le z\le 2,\\
&z, \quad z\ge 4.
\end{alignedat}\right.
\end{equation}
To cancel the magnetic stress $\MMM_q$, we construct the principal correctors $\wpqb$ and $\dpq$. Letting $\chi_{b}:=\chi\Big(\Big\langle\tfrac{M_b}{\delta_{q+1}\ell^{\alpha/2}_q }\Big\rangle\Big)$ and $M_b:=-\MMM_q$,~
$\rho_{b,q}:=\chi_{b}\delta_{q+1}\ell^{\alpha/3}_q$, we set that
\begin{align}
    \wpqb \coloneq & \sum_{l;k \in \Lambda_{b}}\eta_{l}\rho^{1/2}_{b,q} a_{b,k}\big(\tfrac{M_b}{ \rho_{b,q}}\big) \psi_k\phi_{k,\bar{k},\bar{\bar{k}}} \bar{k}
   \coloneq   \sum_{l;k \in \Lambda_{b}}a_{b,l,k}\psi_k\phi_{k,\bar{k},\bar{\bar{k}}}\bar{k}, \label{e:defn-wpq}  \\
    \dpq \coloneq  &\sum_{l;k \in \Lambda_{b }}  \eta_{l} \rho^{1/2}_{b,q}  a_{b,k}\big(\tfrac{M_b}{\rho_{b,q}}\big)  \psi_k\phi_{k,\bar{k},\bar{\bar{k}}} \bar{\bar{k}}
   \coloneq   \sum_{l;k \in \Lambda_{b}}a_{b,l,k}\psi_k\phi_{k,\bar{k},\bar{\bar{k}}}\bar{\bar{k}}.\label{e:defn-dpq}
\end{align}

Next, we want to construct the principal correctors $\wpqu$ to cancel the Reynolds stress $\RRR_q$. Let 
\begin{align}
\rho_q(t) \coloneq   \frac{1}3\Big(e(t) - \int_{\mathbb T^3} (|\vv_q|^2+|\bb_q|^2)\dd x-E(t) - \frac{\delta_{q+2}}{2} \Big), \label{e0:energy-gap}
\end{align}
where $E(t):=\int_{\mathbb{T}^3}(|\wpqb|^2+|\dpq|^2+|\whq|^2+|\dhq|^2)\dd x
.$
 One can easily deduce that $E(t)\leq \delta_{q+1}/10 $. Indeed,
 from the definition of $\phi_{k,\bar{k},\bar{\bar{k}}}$ and $\psi_{k}$, we deduce by  Lemma \ref{holder1} that
\begin{align}
|E(t)|\leq\|(\whq,\dhq,\wpqb,\dpq)\|^2_{L^2}&\lesssim (\|\eta_{l}h_{b,q}^{1/2}\|^2_{L^2}
+\|\eta_{l} \rho^{1/2}_{b,q}\|^2_{L^2}) \|\phi_{k,\bar{k},\bar{\bar{k}}}\|^2_{L^2}< \tfrac{\delta_{q+1}}{10}.
\end{align}
 Hence we deduce by Proposition \ref{p:estimates-for-mollified} and Proposition \ref{p:energy-of-vv_q} that $\rho_q(t)$ is strictly positive in $[1-\tau_{q-1},T]$ and satisfy
\begin{align}
\tfrac{1}{10}\delta_{q+1}\leq 3\rho_q(t) \leq 4\delta_{q+1}
\end{align}

Then, we modify the energy gap $\rho_q(t)$ by setting
\begin{align}
 \rho_{q,0} (t) &\coloneq \delta_{q+1}\aleph(t)+\tfrac{ \rho_q(t)(1-\aleph(t))}{ \eta_{-1}(t) + \sum_{l= 1}^{N_q} \int_{\mathbb T^3}\eta_l^2\chi_{v} (  x,t)\dd   x },  \label{e:energy-q-i-gap}
\end{align}
where $\chi_{v}:=\chi\Big(\Big\langle\tfrac{ R_v}{ \ell^{\alpha/4}_q\delta_{q+1}}\Big\rangle\Big)$ and $R_v:=\RRR_q-\sum_{ l;k\in \Lambda_{v }}a^2_{b,l,k}(\bar{k}\otimes\bar{k}
-\bar{\bar{k}}\otimes\bar{\bar{k}})$.

We firstly show that $\rho_{q,0}(t)$ is   well-defined. When $t\in I_i\cup  J_i$ with $ i\leq N_q^0$, we have $\eta_{-1}(t)=1$, $\rho_{q,0}(t)$ is  well-defined.  When $t\in I_i$ with $i\geq N_q^0$, we have
\begin{align}
 \int_{\mathbb T^3}\eta_l^2\chi_{v } (x,t)\dd x=\int_{\mathbb T^3}\chi_{v }\dd x=\int_{\mathbb T^3}\chi\Big(\Big\langle\tfrac{ R_v}{ \ell^{\alpha/4}_q\delta_{q+1}}\Big\rangle\Big)\dd x
\geq \int_{ \Big\langle\tfrac{ R_v}{ \ell^{\alpha/4}_q\delta_{q+1}}\Big\rangle\leq2 }1\dd x\
 \geq  \frac{1}{2},
 \end{align}
  where we use that $$\Big|m\Big\{x\big|\Big\langle\tfrac{ R_v}{ \ell^{\alpha/4}_q\delta_{q+1}}\Big\rangle\geq2\Big\}\Big|=\int_{ \Big\langle\tfrac{ R_v}{ \ell^{\alpha/4}_q\delta_{q+1}}\Big\rangle\geq2 } \dd x\leq\tfrac{\| R_v\|_{L^1}}{\ell^{\alpha/4}_q\delta_{q+1}}\ll\frac{1}{2}.$$
When $t\in J_i$ with $i\geq N_q^0$, we conclude that
\begin{align}\label{xybc}
\RRR_q=\MMM_q=0~~\text{and}~~ \Big<\tfrac{M_b}{\ell_{q}^{\alpha/2} \delta_{q+1}}\Big>,~\Big<\tfrac{R_v}{\ell_{q}^{\alpha/4} \delta_{q+1}}\Big>\leq2 .
 \end{align}
One could easily deduce  that $\chi_{b} =\chi_{v} =1$. Therefore,
  $$\int_{\mathbb T^3}\eta_l^2\chi_{v}  ( x,t)\dd   x=\int_{\mathbb T^3}\eta_l^2 ( x,t)\dd   x>c_{\eta}\approx\tfrac{1}{4}. $$
 Combining the definition of $\eta_{-1}$ with the above estimates shows that
\begin{equation}
\eta_{-1}(t) + \sum_{l= 1}^{N_q} \int_{\mathbb T^3}\eta_l^2\chi_{v,0} (  x,t)\dd x  \geq\left\{ \begin{alignedat}{-1}
1, & ~~~~0\le t\leq t_{N^0_q}, \\
\tfrac{1}{4}, & ~~~~t_{N^0_q}\leq t\leq T.
\end{alignedat}\right. \label{bufenmu:defn-cutoff-total}
\end{equation}
Therefore, we prove that $\rho_{q,0}$ is well-defined. Recalling the definitions of $\eta_{-1}(t),\aleph(t)$ and the fact that $1-\tau_{q-1}\leq t_{N^0_q} < t_{N^0_q+1}\leq1-\tau_{q}\leq t_{N^0_q+2}\leq 1$, we obtain that
$$\tfrac{\delta_{q+1}}{90}\leq \rho_{q,0}(t)\leq 10\delta_{q+1},~~~~\forall t\in[0,T].$$

 Now, setting
$\rho_{v,q}:= \rho_{q,0}\chi_v$, we construct the principal corrector $\wpqu$ such that
 \begin{align}
 \wpqu \coloneq\sum_{l;k \in \Lambda_{v}} \eta_{l}\rho_{v,q}^{1/2}  a_{v,k}\big({\rm Id}-\tfrac{R_v}{\rho_{v,q}}\big)\psi_k\phi_{k,\bar{k},\bar{\bar{k}}}\bar{k}
    \coloneq  \sum_{ l;k \in \Lambda_{v}}a_{v,l,k}\psi_k\phi_{k,\bar{k},\bar{\bar{k}}}\bar{k}.\label{e:defn-wpqu}
\end{align}
 Combining with \eqref{e:defn-wpq} and \eqref{e:defn-dpq}, we show the principal correctors $\wpq$ and $\dpq$ such that
\begin{equation}\label{pc}
\left\{ \begin{alignedat}{-1}
  \wpq
    \coloneq  & \sum_{ l;k \in \Lambda_{v}}a_{v,l,k}\psi_k\phi_{k,\bar{k},\bar{\bar{k}}}\bar{k}+\sum_{ l;k \in \Lambda_{b}}a_{b,l,k}\psi_k\phi_{k,\bar{k},\bar{\bar{k}}}\bar{k}
    =\wpqu+\wpqb, \\
    \dpq  \coloneq  & \sum_{ l;k \in \Lambda_{b}}a_{b,l,k}\psi_k\phi_{k,\bar{k},\bar{\bar{k}}}\bar{\bar{k}}.
\end{alignedat}\right.
\end{equation}

Collecting  Proposition
 \ref{estimate-vvq}--Proposition \ref{p:energy-of-vv_q} and using the same method as in the proof of Lemma~4.1--Lemma~4.4 in \cite{13Nonuniqueness}, we obtain:
\begin{prop}\label{a-estimate}
For $N\geq0$, we have
\begin{align}
& \|(a_{v,l,k},a_{b,l,k})\|_{L^{2}}\lesssim \delta^{1/2}_{q+1}{\ell}^{\frac{\alpha}{10}}_q ,\\
&\|(a_{v,l,k},a_{b,l,k})\|_{L^{\infty}}\lesssim {\ell}^{-3+\frac{\alpha}{10}}_q ,\\
&\|(a_{v,l,k},a_{b,l,k})\|_{H^{N}}\lesssim \delta^{1/2}_{q+1}{\ell}^{-5N+\frac{\alpha}{10}}_q ,\\
&\|\del_t(a_{v,l,k},a_{b,l,k})\|_{H^{N}}\lesssim \tau^{-1}_q\delta^{1/2}_{q+1}{\ell}^{-5N+\frac{\alpha}{10}}_q .
\end{align}
\end{prop}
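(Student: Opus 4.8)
\textbf{Proof proposal for Proposition \ref{a-estimate}.}

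The plan is to control the amplitude functions $a_{v,l,k}=\eta_l\rho_{v,q}^{1/2}a_{v,k}({\rm Id}-R_v/\rho_{v,q})$ and $a_{b,l,k}=\eta_l\rho_{b,q}^{1/2}a_{b,k}(M_b/\rho_{b,q})$ by reducing everything to estimates on the three building blocks: the temporal cutoff $\eta_l$ (controlled by Lemma \ref{eta}), the amplitude density $\rho_{v,q}$ or $\rho_{b,q}$, and the geometric coefficient $a_{v,k}$ or $a_{b,k}$ evaluated at a matrix argument that by construction lies in the ball of validity of the geometric lemmas. First I would record the pointwise lower and upper bounds $\delta_{q+1}\ell_q^{\alpha/2}\lesssim\rho_{b,q},\rho_{v,q}\lesssim\delta_{q+1}$ (these follow from $\rho_{q,0}\in[\tfrac{\delta_{q+1}}{90},10\delta_{q+1}]$, from $\chi_b,\chi_v\in[1,\langle\cdot\rangle]$, and from the truncation scale in the definitions of $\rho_{b,q},\rho_{q,0}$), so that the ratios $M_b/\rho_{b,q}$ and ${\rm Id}-R_v/\rho_{v,q}$ stay in $B_{\epsilon_b}(0)$ and $B_{\epsilon_v}({\rm Id})$ respectively — this is exactly what the cutoff $\chi$ was introduced to guarantee. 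Since $a_{b,k},a_{v,k}$ are smooth on these balls, composition estimates (Faà di Bruno / the standard $C^N$ chain rule in Sobolev spaces, as in Lemma~4.1--4.4 of \cite{13Nonuniqueness}) reduce $H^N$ and $\partial_t H^N$ control of $a_{v,l,k},a_{b,l,k}$ to $H^N$ and $\partial_t H^N$ control of $\rho_{b,q},\rho_{v,q}$, hence of $M_b=-\MMM_q$ and $R_v=\RRR_q-\sum a_{b,l,k}^2(\bar k\otimes\bar k-\bar{\bar k}\otimes\bar{\bar k})$.

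Next I would feed in the quantitative inputs. From Proposition 3.8 (estimates for $(\RRR_q,\MMM_q)$) we have $\|(\RRR_q,\MMM_q)\|_{W^{N,1}}\lesssim\delta_{q+1}\ell_q^{-N+\alpha}$ and $\|(\partial_t\RRR_q,\partial_t\MMM_q)\|_{W^{N,1}}\lesssim\delta_{q+1}\tau_q^{-1}\ell_q^{-N+\alpha}$; converting $W^{N,1}$ to $H^N$ loses a factor like $\ell_q^{-2N-3}$ via Sobolev embedding on $\mathbb{T}^3$ (one differentiates and uses $\|f\|_{L^2}\lesssim\|f\|_{W^{2,1}}$ appropriately), and the normalization by $\rho_{b,q}^{1/2}\gtrsim\delta_{q+1}^{1/2}\ell_q^{\alpha/4}$ together with $\|\eta_l\|_{L^\infty}\le1$, $\|\partial_t^n\eta_l\|_{C^m}\lesssim\tau_q^{-n}$ and $\tau_q=\ell_q^3$ produces the claimed powers of $\ell_q$; the exponent $\alpha/10$ on the right is simply the surviving fraction of the $\ell_q^\alpha$ gain after these conversions, and the $L^2$ bound uses in addition $\|\eta_l\rho_{v,q}^{1/2}\|_{L^2}\lesssim\delta_{q+1}^{1/2}$ coming from item (4) of Lemma \ref{eta} ($\sum_l\int\eta_l^2\le1$) and the pointwise bound on $\rho_{v,q}$. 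For the $\partial_t$ estimate one also needs $\|\partial_t\eta_l\|_{C^0}\lesssim\tau_q^{-1}$ and $\partial_t\rho_{q,0}$, $\partial_t\aleph$ controlled by $\tau_{q-1}^{-1}\ll\tau_q^{-1}$, so the temporal derivative costs at most one factor $\tau_q^{-1}=\ell_q^{-3}$, consistent with the stated bound.

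The main obstacle I anticipate is the self-referential definition of $R_v$: it contains $\sum_{l;k\in\Lambda_v}a_{b,l,k}^2(\bar k\otimes\bar k-\bar{\bar k}\otimes\bar{\bar k})$, so estimating $a_{v,l,k}$ (which depends on $R_v$) requires a priori control of $a_{b,l,k}$. The resolution is to estimate the $b$-family \emph{first} — its definition via $M_b=-\MMM_q$ and $\rho_{b,q}=\chi_b\delta_{q+1}\ell_q^{\alpha/3}$ involves no such circularity — obtain the stated bounds for $a_{b,l,k}$, and only then plug those into $R_v$, noting that $\|\sum a_{b,l,k}^2(\bar k\otimes\bar k-\bar{\bar k}\otimes\bar{\bar k})\|_{L^1}\lesssim\|a_{b,l,k}\|_{L^2}^2\lesssim\delta_{q+1}\ell_q^{\alpha/5}$ is small compared to $\ell_q^{\alpha/4}\delta_{q+1}$, which is precisely what makes $\chi_v=1$ on the good set and keeps $R_v/\rho_{v,q}$ in $B_{\epsilon_v}({\rm Id})$. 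A secondary technical point is bookkeeping the loss $\ell_q^{-5N}$ rather than $\ell_q^{-N}$ in the $H^N$ estimates: each spatial derivative hitting $\rho_{q,0}$ or the argument of $a_{v,k}$ can land on the mollification scale $\ell_q$ through $R_v$, and the chain rule multiplies these; tracking the worst case gives the factor $\ell_q^{-5N}$, matching Lemma 4.1--4.4 of \cite{13Nonuniqueness}, and I would simply cite that lemma for the routine multilinear estimates rather than redo them.
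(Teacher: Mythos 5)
Your proposal is correct and follows essentially the route the paper intends: the paper gives no written proof of Proposition \ref{a-estimate}, merely collecting Propositions 3.6--3.9 and citing the method of Lemma 4.1--4.4 of \cite{13Nonuniqueness}, and your reduction to bounds on $\eta_l$, $\rho_{b,q}$, $\rho_{v,q}$ and the smooth geometric coefficients, together with the observation that $a_{b,l,k}$ must be estimated before $a_{v,l,k}$ because $R_v$ contains $\sum a_{b,l,k}^2(\bar k\otimes\bar k-\bar{\bar k}\otimes\bar{\bar k})$, is exactly the intended argument. One small imprecision: the pointwise upper bound $\rho_{b,q},\rho_{v,q}\lesssim\delta_{q+1}$ is not true where $\MMM_q$ or $R_v$ is large (there $\chi_b,\chi_v$ grow linearly), but only the $L^1$-averaged version $\|\rho_{\cdot,q}\|_{L^1}\lesssim\delta_{q+1}\ell_q^{\alpha/5}$ is needed for the $L^2$ estimate, and the membership of the rescaled stresses in $B_{\epsilon_b}(0)$, $B_{\epsilon_v}(\mathrm{Id})$ holds pointwise by the very design of $\chi$, as you note.
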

We emphasize that $(a_{v,l,k},a_{b,l,k})$ oscillates at a  frequency $\ell^{-5}_q$, which have relatively small contribution comparing to the ``box flows''. Indeed, the ``box flows'' oscillate at a much higher frequency $\lambda^{\frac{1}{128}}_{q+1}\gg\ell^{-5}_q$ by taking the parameter $b$ sufficiently large in $\lambda_{q+1}$.

(3) {Because the supports of the ``box flows'' are much smaller than the supports of the Mikado flows, we choose the ``box flows'' instead of Mikado flows. However, this choice gives rise to extra errors in the oscillation terms, since they are not divergence-free. To overcome this difficulty, 
we introduce temporal type flows $\wtq,\dtq$  and inverse traveling wave flows $\wvq,\dvq$ , which help us eliminate the extra errors.} Now, let us define the temporal flows $\wtq$ and $\dtq$  by
\begin{align}
\wtq:&=-\mathbb{P}_{H}\mathbb{P}_{>0}\sum_{l;k\in\Lambda_{v}}\tfrac{1}{{\mu}}a^2_{v,l ,k}
\phi^2_{k,\bar{k},\bar{\bar{k}}}
\bar{k}-\mathbb{P}_{H}\mathbb{P}_{>0}\sum_{l;k\in\Lambda_{b}}\tfrac{1}{{\mu}}a^2_{b,l ,k}
\phi^2_{k,\bar{k},\bar{\bar{k}}}
\bar{k},
\label{e:defn-wtq}\\
\dtq:&=-\mathbb{P}_{H}\mathbb{P}_{>0}\sum_{l;k\in\Lambda_{b}}
\tfrac{1}{{\mu}}a^2_{b,l,k}
\phi^2_{k,\bar{k},\bar{\bar{k}}}
\bar{\bar{k}}.
\label{e:defn-dtq}
\end{align}

(4) We construct  the inverse traveling wave flows $\wvq, \dvq$ such that
\begin{align}
\wvq:&=(\mu\sigma)^{-1}\mathbb{P}_{H}\mathbb{P}_{>0}
\sum_{l;k\in\Lambda_{b}}  a^2_{b,l,k}
\div\big( ({\del^{-1}_t\mathbb{P}_{>0}\phi^2_{{\bar{k}}})}\phi^2_{k}\phi^2_{\bar{\bar{k}}}
\bar{\bar{k}}\otimes\bar{\bar{k}}\big),
\label{e:defn-wvq}\\
\dvq:&=(\mu\sigma)^{-1}\mathbb{P}_{H}\mathbb{P}_{>0}\sum_{l;k\in\Lambda_{b}}
 a^2_{b,l,k}
\div\big((\del^{-1}_t\mathbb{P}_{>0}\phi^2_{{\bar{k}}})\phi^2_{k}\phi^2_{\bar{\bar{k}}}
\bar{\bar{k}}\otimes\bar{k} \big),
\label{e:defn-dvq}
\end{align}
where $\del^{-1}_t\mathbb{P}_{>0}\phi^2_{{\bar{k}}}:=\int_{0}^{\sigma(\bar{k}(x-x_k)+\mu t)}(\phi^2_r(z)-1) \dd z$. Since $\phi^2_r(z)-1$ has zero mean, we have $|\del^{-1}_t\mathbb{P}_{>0}(\phi^2_{{\bar{k}}})|\leq2$.

(5) In order to match the inverse traveling wave flows $\wvq, \dvq$, we need to construct  the following heat conduction flows $\wlq, \dlq$ :
\begin{align}
\wlq: =& (\mu\sigma)^{-1}\mathbb{P}_{H}\mathbb{P}_{>0}
\sum_{l;k\in\Lambda_{b}}  a^2_{b,l,k}
\int_{0}^{t}e^{(t-\tau)\Delta}\Delta\div\big( (\del^{-1}_t\mathbb{P}_{>0} \phi^2_{{\bar{k}}})\phi^2_{k}\phi^2_{\bar{\bar{k}}}
\bar{\bar{k}}\otimes\bar{\bar{k}}\big)\dd\tau
\notag\\
&+\mathbb{P}_{H}\mathbb{P}_{>0}\sum_{l;k\in\Lambda_{b}}  a^2_{b,l,k}
\int_{0}^{t}e^{(t-\tau)\Delta}\div\big(\phi^2_{k}\phi^2_{\bar{\bar{k}}}
\bar{\bar{k}}\otimes\bar{\bar{k}}\big)\dd\tau,
\label{e:defn-wlq}\\
\dlq:=& (\mu\sigma)^{-1}\mathbb{P}_{H}\mathbb{P}_{>0}\sum_{l;k\in\Lambda_{b}}
 a^2_{b,l,k}
\int_{0}^{t}e^{(t-\tau)\Delta}\Delta\div\big((\del^{-1}_t\mathbb{P}_{>0}\phi^2_{{\bar{k}}})
\phi^2_{k}\phi^2_{\bar{\bar{k}}}
\bar{\bar{k}}\otimes\bar{k} \big)\dd\tau\notag\\
&+\mathbb{P}_{H}\mathbb{P}_{>0}\sum_{l;k\in\Lambda_{b}}  a^2_{b,l,k}
\int_{0}^{t}e^{(t-\tau)\Delta}\div\big(\phi^2_{k}\phi^2_{\bar{\bar{k}}}
\bar{\bar{k}}\otimes {\bar{k}}\big)\dd\tau.
\label{e:defn-dlq}
\end{align}

(6) Since $\wtq,\wlq,\wvq$ and $ \dtq, \dlq, \dvq $ are divergence-free, it suffices to define two small correctors $\wcq,\dcq$ such that
$$\div(\whq+\wpq+\wcq)=0, \quad \div(\dhq+\dpq+\dcq)=0.$$ 
Firstly, noting that $\psi_{k}\bar{k}$ and $\psi_{k}\bar{\bar{k}}$  are zero mean and divergence-free, we have by \eqref{wc0}
$$\psi_{k}\bar{k}
=-\frac{\curl\curl}{\Delta}  \Big[\Delta\tfrac{\Psi_k\bar{k}}{\lambda^2_{q+1}N^2_{\Lambda}}\Big]
:=\frac{\curl}{\lambda_{q+1}}F_{\bar{k}}~~and~~\psi_{k}\bar{\bar{k}}
=-\frac{\curl\curl}{\Delta}  \Big[\Delta\tfrac{\Psi_k\bar{\bar{k}}}{\lambda^2_{q+1}N^2_{\Lambda}}\Big]:=\frac{\curl}{\lambda_{q+1}}F_{\bar{\bar{k}}}.$$
Hence,
$$
 \wpq+\whq=\sum_{l;k \in \Lambda}\frac{a_{v,c,1}}{\lambda_{q+1}}\curl F_{\bar{k}}+
\frac{a_{v,c,2}}{\lambda_{q+1}}\curl F_{\bar{\bar{k}}},
$$
where $a_{v,c,1}= (a_{v,l,k}\mathbf{1}_{k\in\Lambda_v}+ a_{b,l, k}\mathbf{1}_{k\in\Lambda_{b }})\phi_{k,\bar{k},\bar{\bar{k}}}$ and $a_{v,c,2}=   \eta_{l}h^{1/2}_{b,q}\mathbf{1}_{k\in\Lambda_{s}}
 \phi_{k,\bar{k},\bar{\bar{k}}} $.

Next, we set
\begin{align}
    \wcq \coloneq
   \sum_{l ;k \in \Lambda}\tfrac{1}{\lambda_{q+1}}\nabla a_{v,c,1}\times F_{\bar{k}}+\tfrac{1}{\lambda_{q+1}}\nabla a_{v,c,2}\times F_{\bar{\bar{k}}}.\label{e:defn-wcq}
\end{align}
This equality leads to $$\wpq+\whq+\wcq= \sum_{l ;k \in \Lambda}\curl \Big(\tfrac{a_{v,c,1}}{\lambda_{q+1}}F_{{\bar{k}}}+\tfrac{a_{v,c,2}}{\lambda_{q+1}}F_{{\bar{\bar{k}}}}\Big)$$ by $$\curl (fW) = \nabla f \times W + f\curl W.$$

Similarly, letting
$$a_{b,c}= (a_{b,l ,k} \mathbf{1}_{k\in\Lambda_{b }}
+  \eta_{l}h^{1/2}_{b,q}\mathbf{1}_{k\in\Lambda_{s}}) \phi_{k,\bar{k},\bar{\bar{k}}},$$
 we define
\begin{align}
    \dcq &\coloneq\sum_{l ;k \in \Lambda }\tfrac{1}{\lambda_{q+1}}\nabla a_{b,c}\times F_{\bar{\bar{k}}}.
\end{align}
One deduces that $$\wcq+\wpq+\whq~~~\text{and}~~~ \dcq+\dpq+\dhq$$ are divergence-free and have zero mean.

(7) Finally, we define the following ``initial flows'':
\begin{align}
    \wsq &\coloneq e^{t\Delta}(\vin* \psi_{\ell_q}-\vin* \psi_{\ell_{q-1}}* \psi_{\ell_q})  \label{e:defn-v_q+1-and-w_s},\\
    \dsq &\coloneq e^{t\Delta}(\bin* \psi_{\ell_q}-\bin* \psi_{\ell_{q-1}}* \psi_{\ell_q} ) \label{b:defn-v_q+1-and-w_s}.
\end{align}
From the definition of $\eta_l$ in \eqref{e:defn-cutoff-total}, one obtains that
 $$(\whq+\wpq+\wtq+\wvq+\wlq+\wcq )(0,x)=0, \,\,(\dhq+\dpq+\dtq+\dvq+\dlq+\dcq ) (0,x)=0. $$
 Hence, \eqref{e:defn-v_q+1-and-w_s} and \eqref{b:defn-v_q+1-and-w_s} guarantee that $v_{q+1}(0,x)=\vin* \psi_{\ell_q}$ and $b_{q+1}(0,x)=\bin* \psi_{\ell_q}$. This fact implies that \eqref{e:RR_q-C1} holds with $q$ replaced by $q+1$. Moreover, one can easily deduce that $\wsq$ and $\dsq$ are divergence-free and have zero mean.

To sum up, we construct
\begin{align}
    w_{q+1} &\coloneq \whq+\wpq+\wtq+\wlq+\wvq+\wcq+\wsq,    \label{e:defn-v_q+1-and-w_q+1}\\
    d_{q+1} &\coloneq \dhq+\dpq+\dtq+\dlq+\dvq+\dcq +\dsq,    \label{e:defn-b_q+1-and-d_q+1}
\end{align}
which help us finish the iteration from $(\vv_q,\bb_q)$ to $(v_{q+1},b_{q+1})$.


Now, we show that \eqref{e:vq-C0} and \eqref{e:vq-C1} hold at $q+1$ level.
\begin{prop}[Estimates for $w_{q+1}$ and $d_{q+1}$]
\label{c:estimates-for-wpq-dpq}
\begin{align}
\|(\whq,\dhq)\|_{L^2}+\tfrac1{\lambda^5_{q+1}} \|(\whq,\dhq)\|_{H^3} &\le \tfrac{1}{10}\delta_{q+1}^{1/2},\label{estimation-whq}\\
\|(\wpq,\dpq)\|_{L^2}+\tfrac1{\lambda^5_{q+1}} \|(\wpq,\dpq)\|_{H^3} &\le 100\delta_{q+1}^{1/2},\label{estimation-wpq}\\
      \|(\wtq,\dtq)\|_{L^2}+\tfrac1{\lambda^5_{q+1}} \|(\wtq,\dtq)\|_{H^3} &\le \lambda^{-50\alpha}_{q+1}\delta_{q+2},\label{estimation-wtq}\\
      \|(\wvq,\dvq)\|_{L^2}+\tfrac1{\lambda^5_{q+1}} \|(\wvq,\dvq)\|_{H^3} &\le \lambda^{-50\alpha}_{q+1}\delta_{q+2},\label{estimation-wvq}\\
      \|(\wcq,\dcq)\|_{L^2}+\tfrac1{\lambda^5_{q+1}} \|(\wcq,\dcq)\|_{H^3} &\le \lambda^{-50\alpha}_{q+1} \delta_{q+2},\label{estimation-wcq}\\
       \|(\wlq,\dlq)\|_{L^2}+\tfrac1{\lambda^5_{q+1}} \|(\wlq,\dlq)\|_{H^3} &\le \lambda^{-50\alpha}_{q+1}\delta_{q+2},\label{estimation-wlq}\\
        \|(\wsq,\dsq)\|_{L^2}+\tfrac1{\lambda^5_{q+1}} \|(\wsq,\dsq)\|_{H^3} &\le  \lambda^{-50\alpha}_{q+1}\delta_{q+2},\label{estimation-wsq}\\
            \|(w_{q+1},d_{q+1})\|_{L^2}+\tfrac1{\lambda^5_{q+1}} \|(w_{q+1},d_{q+1})\|_{H^3} &\le 400 \delta_{q+1}^{1/2}.\label{wq+L2}
\end{align}
\end{prop}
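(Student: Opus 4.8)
The plan is to estimate each of the seven building blocks of the perturbation separately in $L^2$ and $H^3$, and then sum. The unifying mechanism is that every ``box flow'' factor $\psi_k\phi_{k,\bar k,\bar{\bar k}}$ has $L^2$-norm normalized to $1$, oscillates at frequency comparable to $\lambda_{q+1}$ (for $\psi_k$) and $\sigma=\lambda_{q+1}^{1/128}$, $\bar r^{-1},\bar{\bar r}^{-1},\mu$ for $\phi_{k,\bar k,\bar{\bar k}}$, while its slowly-varying amplitude coefficients $a_{v,l,k},a_{b,l,k}$ (and $\eta_l h_{b,q}^{1/2}$, $\eta_l\rho_{b,q}^{1/2}$) oscillate only at frequency $\ell_q^{-5}\ll\lambda_{q+1}^{1/128}$ by the choice of $b$ large. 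The disjointness of supports in \eqref{suppkk'} (together with the pairwise disjointness of the $\eta_l$ from Lemma~\ref{eta}(3)) means there is no interference between different $k$ or different $l$, so each sum behaves like a single term up to the finite cardinality of $\Lambda$.

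First I would prove \eqref{estimation-whq} and \eqref{estimation-wpq}. For the $L^2$ bounds, use the improved Hölder-type estimate (the paper refers to ``Lemma~\ref{holder1}'', a commutator/oscillation lemma that lets one pull out $\|\phi_{k,\bar k,\bar{\bar k}}\|_{L^2}=1$) to get $\|(\whq,\dhq)\|_{L^2}\lesssim\|\eta_l h_{b,q}^{1/2}\|_{L^2}$ and $\|(\wpq,\dpq)\|_{L^2}\lesssim\|(a_{v,l,k},a_{b,l,k})\|_{L^2}\lesssim\delta_{q+1}^{1/2}\ell_q^{\alpha/10}$ by Proposition~\ref{a-estimate}; since $h_{b,q}\le\delta_{q+1}/50$ and the implied constants are absorbed into the explicit $1/10$ and $100$ (the extra factor $\ell_q^{\alpha/10}\to 1$ helps), these follow for $a$ large. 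For the $H^3$ bounds, distribute derivatives: each derivative either hits $\psi_k\phi_{k,\bar k,\bar{\bar k}}$, costing at most $\sigma\mu\lesssim\lambda_{q+1}^{1/128+17/16}\ll\lambda_{q+1}^{5/4}$ per derivative (so three derivatives cost $\ll\lambda_{q+1}^{5}$), or hits an amplitude, costing only $\ell_q^{-5}$ per derivative from Proposition~\ref{a-estimate}; combined with the $L^\infty$ bound $\|(a_{v,l,k},a_{b,l,k})\|_{L^\infty}\lesssim\ell_q^{-3+\alpha/10}$ and $\|\psi_k\phi_{k,\bar k,\bar{\bar k}}\|_{H^3}\lesssim\lambda_{q+1}^{3}\cdot(\text{size in }L^2)$, the product is $\lesssim\lambda_{q+1}^{5}\delta_{q+1}^{1/2}$, and dividing by $\lambda_{q+1}^5$ gives the claim.

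Next I would handle the five ``small'' flows \eqref{estimation-wtq}–\eqref{estimation-wsq}, where the target $\lambda_{q+1}^{-50\alpha}\delta_{q+2}$ is much smaller. For $\wtq,\dtq$: the $\mu^{-1}=\lambda_{q+1}^{-17/16}$ prefactor beats everything, and $\|a_{b,l,k}^2\phi_{k,\bar k,\bar{\bar k}}^2\|_{L^2}\lesssim\|a\|_{L^\infty}\|a\|_{L^2}\|\phi^2\|_{L^2}$ with $\|\phi_{k,\bar k,\bar{\bar k}}^2\|_{L^2}=\|\phi_{k,\bar k,\bar{\bar k}}\|_{L^4}^2\lesssim\lambda_{q+1}^{14/16+5/32}$ by Proposition~\ref{guji1}; boundedness of $\mathbb{P}_H\mathbb{P}_{>0}$ on $H^N$ finishes it. For $\wvq,\dvq$: the $(\mu\sigma)^{-1}$ prefactor plus the extra $\div$ costing $\sigma$ gives net $\mu^{-1}$, and $|\partial_t^{-1}\mathbb{P}_{>0}\phi_{\bar k}^2|\le 2$ is harmless; the analysis mirrors $\wtq$. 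For $\wlq,\dlq$: bound the Duhamel integrals using $\|e^{(t-\tau)\Delta}\Delta\|_{H^N\to H^N}\lesssim(t-\tau)^{-1}$ (integrable) and $\|e^{(t-\tau)\Delta}\|_{H^N\to H^N}\le 1$, reducing to the same kernel estimates with at worst a $\log$ loss absorbed by powers of $\lambda_{q+1}$. For $\wcq,\dcq$: the structural gain is the extra $\lambda_{q+1}^{-1}$ from $\psi_k=\lambda_{q+1}^{-2}N_\Lambda^{-2}\Delta\Psi_k$ written as $\lambda_{q+1}^{-1}\curl F$, so $\wcq$ has $L^2$ size $\lesssim\lambda_{q+1}^{-1}\ell_q^{-5}\|(a_{v,c,1},a_{v,c,2})\|_{L^2}\ll\lambda_{q+1}^{-50\alpha}\delta_{q+2}$ since $\alpha\le b^{-6}$ makes $\lambda_{q+1}^{-1+50\alpha}\ell_q^{-5}\to 0$. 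For $\wsq,\dsq$: heat-semigroup smoothing plus $\|\vin*\psi_{\ell_q}-\vin*\psi_{\ell_{q-1}}*\psi_{\ell_q}\|_{L^2}\lesssim\ell_{q-1}^{\bar\beta}\|\vin\|_{H^{\bar\beta}}$ (a mollification estimate) kills this easily for $a$ large. Finally, \eqref{wq+L2} follows by the triangle inequality: $\frac1{10}+100+5\lambda_{q+1}^{-50\alpha}\delta_{q+2}<400$ in $L^2$ (with plenty of room), and likewise after dividing the $H^3$ bounds by $\lambda_{q+1}^5$. The main obstacle is the $H^3$ estimate for $\wpq,\dpq$, where one must carefully track that differentiating the low-frequency amplitudes costs only $\ell_q^{-5}$ while differentiating the box flows costs $\sigma\mu$, and verify that $(\sigma\mu)^3=\lambda_{q+1}^{3(1/128+17/16)}$ stays strictly below $\lambda_{q+1}^5$ — this is where the precise choice of parameters $\sigma=\lambda_{q+1}^{1/128}$, $\mu=\lambda_{q+1}^{17/16}$ is used, and getting the bookkeeping of intermittency exponents from Proposition~\ref{guji1} exactly right is the delicate point.
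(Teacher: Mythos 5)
Most of your proposal follows the paper's own route: the $L^2$ and $H^3$ bounds for $(\whq,\dhq)$ and $(\wpq,\dpq)$ via Lemma \ref{holder1}, Proposition \ref{guji1} and Proposition \ref{a-estimate}; the $\mu^{-1}$ (resp.\ the net $\mu^{-1}$ after $(\mu\sigma)^{-1}\cdot\sigma$) gain for the temporal and inverse-travelling-wave flows; the mollification estimate $\ell_{q-1}^{\bar\beta}\|(\vin,\bin)\|_{H^{\bar\beta}}$ for the initial flows; and the concluding triangle inequality. Two harmless inaccuracies: a \emph{spatial} derivative of the box flow costs $\max(\lambda_{q+1}N_\Lambda,\,\sigma\bar r^{-1})=\lambda_{q+1}N_\Lambda$, not $\sigma\mu$ ($\mu$ is a temporal frequency), though $\sigma\mu$ still happens to be an upper bound; and $\nabla a_{v,c,1}$ in $\wcq$ costs $\sigma\bar r^{-1}$ rather than $\ell_q^{-5}$, since $a_{v,c,1}$ contains the factor $\phi_{k,\bar k,\bar{\bar k}}$. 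Both slips leave the inequalities intact.

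The genuine gap is in \eqref{estimation-wlq}. First, $\int_0^t(t-\tau)^{-1}\dd\tau$ diverges, so the bound $\|e^{(t-\tau)\Delta}\Delta\|_{H^N\to H^N}\lesssim(t-\tau)^{-1}$ is \emph{not} integrable as you claim; one must give up a fraction of a derivative, e.g.\ $\|e^{s\Delta}\Delta f\|_{L^2}\lesssim s^{-1+\alpha/2}\|f\|_{H^\alpha}$, which is what the paper's estimate $\|\int_0^te^{(t-\tau)\Delta}\Delta g\,\dd\tau\|_{L^\infty_tL^2}\le\|g\|_{L^\infty_tH^\alpha}$ encodes (the resulting $\lambda_{q+1}^{\alpha}$ loss is harmless). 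Second, and more seriously, the second Duhamel term in \eqref{e:defn-wlq}, namely $\mathbb{P}_H\mathbb{P}_{>0}\sum a^2_{b,l,k}\int_0^te^{(t-\tau)\Delta}\div\big(\phi^2_k\phi^2_{\bar{\bar k}}\bar{\bar k}\otimes\bar{\bar k}\big)\dd\tau$, carries no $(\mu\sigma)^{-1}$ prefactor, and bounding it by $\|e^{(t-\tau)\Delta}\|_{L^2\to L^2}\le1$ yields only $t\cdot\|\div(\phi^2_k\phi^2_{\bar{\bar k}}\bar{\bar k}\otimes\bar{\bar k})\|_{L^2}\approx\sigma\,\bar{\bar r}^{-3/2}r^{-1/2}$, a large positive power of $\lambda_{q+1}$, nowhere near $\lambda_{q+1}^{-50\alpha}\delta_{q+2}$. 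The smallness of this term comes from the fact that the integrand is time-independent and spectrally supported at frequencies $\gtrsim\sigma$, so the time integral of the heat semigroup produces the factor $\tfrac{1-e^{-t|\xi|^2}}{|\xi|^2}\le|\xi|^{-2}$: concretely the paper writes $\div(h\,\bar{\bar k}\otimes\bar{\bar k})=\sigma(\partial_2h)\bar{\bar k}=\sigma^{-1}\Delta\big[\tfrac{\partial_2h}{\Delta}(\sigma k\cdot x,\sigma\bar{\bar k}\cdot x)\big]\bar{\bar k}$ and then applies the maximal-regularity bound together with an interpolation between $L^2$ and $\dot H^1$, converting the dangerous $\sigma^{+1}$ into $\sigma^{-1+\alpha}\lambda_{q+1}^{2\alpha}\ll\lambda_{q+1}^{-50\alpha}\delta_{q+2}$. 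Without this $\Delta^{-1}$ gain --- which is the entire reason the heat-conduction flows are built from Duhamel integrals in the first place --- your argument for \eqref{estimation-wlq} does not close.
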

\begin{proof}
Firstly, from the definition of $\phi_{k,\bar{k},\bar{\bar{k}}}$ and $\psi_{k}$, we deduce by Proposition \ref{guji1} ,~Proposition \ref{a-estimate} and Lemma~\ref{holder1}  that
\begin{align}
\|(\whq,\dhq)\|_{L^2}&\leq \|\eta_{l}h_{b,q}^{1/2}  \psi_{k}\phi_{k,\bar{k},\bar{\bar{k}}}\|_{L^2}\lesssim \|\eta_{l}h_{b,q}^{1/2}\|_{L^2}  \|\phi_{k,\bar{k},\bar{\bar{k}}}\|_{L^2}\leq \tfrac{\delta^{1/2}_{q+1}}{20},\label{wpdpL2}
\end{align}
and
\begin{align}
\|(\whq,\dhq)\|_{{H}^3}&\leq \lambda^5_{q+1}\tfrac{\delta^{1/2}_{q+1}}{20}.\label{wpdpH3}
\end{align}
Hence, \eqref{wpdpL2} and  \eqref{wpdpH3} imply \eqref{estimation-whq}.  In the same way as deriving \eqref{wpdpL2} and \eqref{wpdpH3}, we  obtain \eqref{estimation-wpq}--\eqref{estimation-wcq}.For example, to prove \eqref{estimation-wvq}, recall that:
\begin{align}
\wvq:&=(\mu\sigma)^{-1}\mathbb{P}_{H}\mathbb{P}_{>0}
\sum_{l;k\in\Lambda_{b}}  a^2_{b,l,k}
\div\big( ({\del^{-1}_t\mathbb{P}_{>0}\phi^2_{{\bar{k}}})}\phi^2_{k}\phi^2_{\bar{\bar{k}}}
\bar{\bar{k}}\otimes\bar{\bar{k}}\big)
\end{align}
where $\del^{-1}_t\mathbb{P}_{>0}\phi^2_{{\bar{k}}}:=\int_{0}^{\sigma(\bar{k}(x-x_k)+\mu t)}(\phi^2_r(z)-1) \dd z$. Since $\phi^2_r(z)-1$ has zero mean, we have $|\del^{-1}_t\mathbb{P}_{>0}(\phi^2_{{\bar{k}}})|\leq2$. We deduce that
\begin{align}
\|\wvq\|_{L^2} \lesssim &(\mu\sigma)^{-1}\|a^2_{b,l,k}\|_{L^{\infty}}\| ({\del^{-1}_t\mathbb{P}_{>0}\phi^2_{{\bar{k}}})}\phi^2_{k}\div\big(\phi^2_{\bar{\bar{k}}}
\bar{\bar{k}}\otimes\bar{\bar{k}}\big)\|_{L^2}\notag\\
\lesssim &(\mu\sigma)^{-1}\ell^{-15}_q(\sigma \bar{r}^{-1/2}\bar{\bar{r}}^{-3/2} )
\lesssim \ell^{-15}_q\lambda^{-\frac{17}{16}+\frac{7}{16}+\frac{15}{32}}_{q+1}
\ll\lambda^{-50\alpha}_{q+1}\delta_{q+2},\notag\\
\|\wvq\|_{H^3} \lesssim &\lambda^{3}_{q+1}(\mu\sigma)^{-1}\ell^{-15}_q(\sigma \bar{r}^{-1/2}\bar{\bar{r}}^{-3/2} )
\lesssim \ell^{-15}_q\lambda^{3-\frac{17}{16}+\frac{7}{16}+\frac{15}{32}}_{q+1}
\ll\lambda^5_{q+1}\lambda^{-50\alpha}_{q+1}\delta_{q+2},
\end{align}
which implies $\| \wvq \|_{L^2}+\tfrac1{\lambda^5_{q+1}} \| \wvq \|_{H^3}  \le \lambda^{-50\alpha}_{q+1}\delta_{q+2}$. Similarly for $\dvq$, one can easily deduce \eqref{estimation-wvq}.


{Secondly}, we aim to prove \eqref{estimation-wlq}. For simplicity, we denote $\wlq$ by
\begin{align}
\wlq: =& (\mu\sigma)^{-1}\mathbb{P}_{H}\mathbb{P}_{>0}
\sum_{l;k\in\Lambda_{b}}  a^2_{b,l,k}
\int_{0}^{t}e^{(t-\tau)\Delta}\Delta g(x,t)\dd\tau
\notag\\
&+\mathbb{P}_{H}\mathbb{P}_{>0}\sum_{l;k\in\Lambda_{b}}  a^2_{b,l,k}
\int_{0}^{t}e^{(t-\tau)\Delta}\div\big(h({\sigma}k\cdot x, {\sigma}\bar{\bar{k}}\cdot x)
\bar{\bar{k}}\otimes\bar{\bar{k}}\big)\dd\tau,
\end{align}
where  $h({\sigma}k\cdot x, {\sigma}\bar{\bar{k}}\cdot x)=\phi^2_{k}\phi^2_{\bar{\bar{k}}}=\phi^2_{r}(\sigma k\cdot (x-x_k))\phi^2_{r}(\sigma {\bar{\bar {k}}}\cdot (x-x_k))$ and
$$g(t,x):=\div\big( (\del^{-1}_t\mathbb{P}_{>0}\phi^2_{{\bar{k}}})\phi^2_{k}\phi^2_{\bar{\bar{k}}}
\bar{\bar{k}}\otimes\bar{\bar{k}}\big)= (\del^{-1}_t\mathbb{P}_{>0}\phi^2_{{\bar{k}}})\phi^2_{k}\div\big(\phi^2_{\bar{\bar{k}}}
\bar{\bar{k}}\otimes\bar{\bar{k}}\big).$$
 Using  the estimates of solution for the heat equation in \cite{book}, we deduce that for $0<\alpha\leq\min\{\frac{1}{b^6},\frac{\beta}{b^3}\}$,
\begin{align}\label{wlq1}
\big\|\int_{0}^{t}e^{(t-\tau)\Delta}\Delta g \dd\tau\big\|_{L^{\infty}_tL^2}
 \leq\|g   \|_{{L}^{\infty}_tH^{\alpha}}\lesssim  \lambda^{\alpha}_{q+1}\sigma r^{-1/2}\bar{\bar{r}}^{-3/2}.
\end{align}
Similarly, noting that
$ \div \big(h({\sigma}k\cdot x, {\sigma}\bar{\bar{k}}\cdot x)\bar{\bar{k}}\otimes\bar{\bar{k}}\big)={\sigma}(\partial_2h)
({\sigma}k\cdot x, {\sigma}\bar{\bar{k}}\cdot x)\bar{\bar{k}}$, we deduce that
\begin{equation}\label{estimate-h}
\begin{aligned}
&\big\|\int_{0}^{t}e^{(t-\tau)\Delta} \div\big(h({\sigma}k\cdot x, {\sigma}\bar{\bar{k}}\cdot x) \bar{\bar{k}}\otimes\bar{\bar{k}}\big) \dd\tau\big\|_{L^{\infty}_tL^2}\\
\lesssim&{\sigma}\big\|\tfrac{(\partial_2h)
({\sigma}k\cdot x,{\sigma}\bar{\bar{k}}\cdot x)\bar{\bar{k}}}{\Delta}\big\|_{L^{\infty}_tH^{\alpha}}
=\sigma^{-1}\big\|\tfrac{\partial_2h}{\Delta}({\sigma}k\cdot x, {\sigma}\bar{\bar{k}}\cdot x)\big\|_{L^{\infty}_tH^{\alpha}},
\end{aligned}
\end{equation}
where we have used the fact that
$$(\partial_2h)({\sigma}k\cdot x, {\sigma}\bar{\bar{k}}\cdot x)=\sigma^{-2}\Delta[\tfrac{\partial_2h}{\Delta}({\sigma}k\cdot x, {\sigma}\bar{\bar{k}}\cdot x)],~~(k,{\bar{k}},\bar{\bar{k}})\in\Lambda.$$
Since $h(\cdot,\cdot)\in C^{\infty}(\mathbb{T}^2)$ for fixed $q$ and $k\perp \bar{\bar{k}}$, we have for $\alpha>0$,
 $$\|\tfrac{\partial_2h}{\Delta}({\sigma}k\cdot x, {\sigma}\bar{\bar{k}}\cdot x)\|_{L^2(\mathbb{T}^3)}=\|\tfrac{\partial_2h}{\Delta}(\cdot,\cdot)\|_{L^2(\mathbb{T}^2)}\lesssim \|h\|_{W^{\alpha ,1}(\mathbb{T}^2)}$$
and
$$\|\tfrac{\partial_2h}{\Delta}({\sigma}k\cdot x, {\sigma}\bar{\bar{k}}\cdot x)\|_{\dot{H}^1(\mathbb{T}^3)}\lesssim\sigma\|h\|_{L^2(\mathbb{T}^2)}\lesssim \sigma \|h \|_{W^{1+\alpha ,1}(\mathbb{T}^2)}.$$
Plugging the above two estimates into \eqref{estimate-h} yields that
\begin{equation}\label{wlq2}
\begin{aligned}
&\big\|\int_{0}^{t}e^{(t-\tau)\Delta} \div\big(h({\sigma}k\cdot x, {\sigma}\bar{\bar{k}}\cdot x) \bar{\bar{k}}\otimes\bar{\bar{k}}\big) \dd\tau\big\|_{L^{\infty}_tL^2}\\
\lesssim&  \sigma^{-1}\|\tfrac{\partial_2h}{\Delta}({\sigma}k\cdot x, {\sigma}\bar{\bar{k}}\cdot x)\|^{1-\alpha}_{L^2(\mathbb{T}^3)}\|\tfrac{\partial_2h}{\Delta}({\sigma}k\cdot x, {\sigma}\bar{\bar{k}}\cdot x)\|^{\alpha}_{\dot{H}^1(\mathbb{T}^3)}\\
\lesssim & \sigma^{-1}\|h\|^{1-\alpha}_{W^{\alpha ,1}} \|h \|^{\alpha}_{W^{1+\alpha ,1}} \lesssim\sigma^{-1+\alpha}\lambda^{2\alpha}_{q+1}.
\end{aligned}
\end{equation}
Combining \eqref{wlq1} with \eqref{wlq2}, we have
$$\|\wlq\|_{L^\infty_tL^2}\lesssim  \ell^{-6}_{q}\lambda^{4\alpha}_{q+1}(\mu^{-1}r^{-1/2}\bar{\bar{r}}^{-3/2}+\sigma^{-1})
\lesssim  \ell^{-6}_{q}(\lambda_{q+1}^{-\frac{17}{16}+\frac{7}{16}+\frac{15}{32}+2\alpha} +\lambda^{-\frac{1}{128}+2\alpha}_{q+1})\ll \lambda^{-50\alpha}_{q+1}\delta_{q+2},$$
where $\sigma=\lambda^{\frac{1}{128}}_{q+1},~{\mu}=\lambda^{\frac{17}{16}}_{q+1}
$ and $r=\bar{r}=\lambda_{q+1}^{-\frac{14}{16}},~\bar{\bar{r}}=\lambda_{q+1}^{-\frac{5}{16}}$. A similar calculation also yields that
$$\|\dlq\|_{L^2}\lesssim \ell^{-6}_{q}\lambda^{2\alpha}_{q+1}( \mu^{-1}r^{-1/2}\bar{\bar{r}}^{-3/2}+\sigma^{-1})
\ll\lambda^{-50\alpha}_{q+1}\delta_{q+2} $$
and
$$\|(\wlq,\dlq)\|_{H^3}\lesssim\lambda^{5-50\alpha}_{q+1}\delta_{q+2}.$$
This completes the proof of \eqref{estimation-wlq}.

Finally, we turn to prove \eqref{estimation-wsq}. Recalling that $$\beta=\frac{\bar{\beta}}{b^4},~b=2^{16\lceil\bar{\beta}^{-1/2}\rceil} ~~\text{and}~~ \alpha\leq\min\Big\{\frac{1}{b^6},\frac{\beta}{b^3}\Big\},$$
we have
  $${\ell}^{\bar{\beta}}_{q-1}<\lambda_{q-1}^{(-1-\beta b+\beta)\bar{\beta}}<\lambda_{q-1}^{-2\beta b^3-50\alpha b^2}.$$
For any $(\vin, \bin)\in (H^{\bar{\beta}}, H^{\bar{\beta}}),~\bar{\beta}>\beta$, we have
\begin{align}\label{ws1}
    \|(\wsq,\dsq)\|_{L^2} &\leq \|(\vin* \psi_{\ell_q}-\vin* \psi_{\ell_{q-1}}* \psi_{\ell_q},~\bin* \psi_{\ell_q}-\bin* \psi_{\ell_{q-1}}* \psi_{\ell_q})\|_{L^2}\notag\\
    &\lesssim  \ell^{\bar{\beta}}_{q-1}\|(\vin, \bin)\|_{H^{\bar{\beta}}}\notag\lesssim \lambda_{q+1}^{-50\alpha}\delta_{q+2},
\end{align}
and
\begin{align}
    \|(\wsq,\dsq)\|_{\dot{H}^3} &\lesssim\|(\vin, \bin)\|_{L^2}
    \ell^{-3}_{q}\leq \lambda_{q+1}^{5-50\alpha}\delta_{q+2}.
\end{align}
Hence, we derive \eqref{estimation-wsq}. Collecting the estimates \eqref{estimation-whq}--\eqref{estimation-wsq}, one obtains \eqref{wq+L2}.
\end{proof}
\begin{rem}\label{xiao}
Proposition \ref{c:estimates-for-wpq-dpq} tells us that $\wcq,\wsq,\wtq,\wlq,\wvq,\dcq,\dsq,\dtq,\dlq $ and $\dvq$ are small such that
\begin{align}\label{xiao1}
    \|(\wcq,\wsq,\wtq,\wlq,\wvq,\dcq,\dsq,\dtq,\dlq,\dvq)\|_{L^2} \le\lambda_{q+1}^{-50\alpha}\delta_{q+2}.
\end{align}
So one can omit these terms in estimating the linear errors or the oscillation errors.
\end{rem}

\section{Estimates of the stresses associated with the MHD system}

\begin{prop}[Estimate for $\Rlinear$ and $\Mlinear$]\label{Rlinear}
\begin{align}
\|\Rlinear- \mathcal R[(\del_t\wtq+\del_t\wvq)+(\del_t\wlq-\Delta\wlq-\Delta\wvq)]\|_{L^1}  \lesssim\lambda^{-50\alpha}_{q+1}\delta_{q+2},\label{Rlinear1}\\
\|\Mlinear-\mathcal{R}_a[(\del_t\dtq+\del_t\dvq)+(\del_t\dlq-\Delta\dlq-\Delta\dvq)]\|_{L^1}  \lesssim\lambda^{-50\alpha}_{q+1}\delta_{q+2}.\label{Mlinear1}
\end{align}
\end{prop}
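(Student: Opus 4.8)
The plan is to show that, once the three families of flows singled out on the left-hand side are subtracted, everything left in $\Rlinear$ (and in $\Mlinear$) carries a genuinely negative power of $\lambda_{q+1}$; since $\lambda_{q+1}=\lambda_q^{b}$ with $b=2^{16\lceil\bar\beta^{-1/2}\rceil}$ enormous, such powers dominate all positive powers of $\lambda_q$ (equivalently of $\ell_q^{-1}$) and all factors $\tau_q^{-1}$, so each term is $\ll\lambda_{q+1}^{-50\alpha}\delta_{q+2}$. First I would do the algebra: substituting $w_{q+1}=\whq+\wpq+\wtq+\wlq+\wvq+\wcq+\wsq$ into $\Rlinear$ and using that $\wsq=e^{t\Delta}(\vin*\psi_{\ell_q}-\vin*\psi_{\ell_{q-1}}*\psi_{\ell_q})$ solves the heat equation, so $\del_t\wsq-\Delta\wsq=0$, the field $\wsq$ drops out of $\mathcal R[\del_t w_{q+1}-\Delta w_{q+1}]$. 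Cancelling the three extracted terms leaves only the time derivatives of $\whq,\wpq,\wcq$ and the Laplacians of $\whq,\wpq,\wtq,\wcq$:
\begin{align*}
&\Rlinear-\mathcal R\big[(\del_t\wtq+\del_t\wvq)+(\del_t\wlq-\Delta\wlq-\Delta\wvq)\big]\\
&\quad=\mathcal R\big[\del_t(\whq+\wpq+\wcq)-\Delta(\whq+\wpq+\wtq+\wcq)\big]+\big[w_{q+1}\mathring\otimes\vv_q+\vv_q\mathring\otimes w_{q+1}-d_{q+1}\mathring\otimes\bb_q-\bb_q\mathring\otimes d_{q+1}\big].
\end{align*}
The identical computation with $\mathcal{R}_a$, $\mathbb{P}_H$ and $\bb_q$ gives the analogous identity for $\Mlinear$.

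Next I would bound the last ``Nash'' bracket purely by intermittency, staying in $L^1$ throughout (never passing through $L^2$): $\|w_{q+1}\mathring\otimes\vv_q\|_{L^1}\le\|w_{q+1}\|_{L^1}\|\vv_q\|_{L^\infty}$, where $\|\vv_q\|_{L^\infty}\lesssim\|\vv_q\|_{H^3}\lesssim\lambda_q^{5}$ by Sobolev and Proposition~\ref{estimate-vvq}. The box-flow part of $w_{q+1}$ contributes a negative power of $\lambda_{q+1}$ since $\|\psi_k\phi_{k,\bar{k},\bar{\bar{k}}}\|_{L^1}\lesssim\|\phi_{k,\bar{k},\bar{\bar{k}}}\|_{L^1}\lesssim\lambda_{q+1}^{-33/32}$ by Proposition~\ref{guji1} while the amplitudes are $\lesssim\ell_q^{-3+\alpha/10}$ by Proposition~\ref{a-estimate}; the parts coming from $\wsq,\wlq,\wvq$ are handled directly from $\|(\wsq,\wlq,\wvq)\|_{L^2}\le\lambda_{q+1}^{-50\alpha}\delta_{q+2}$ (Proposition~\ref{c:estimates-for-wpq-dpq}) together with $\|\vv_q\|_{L^2}\lesssim1$. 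Hence the bracket is $\lesssim\lambda_q^{O(1)}\lambda_{q+1}^{-33/32}+\lambda_{q+1}^{-50\alpha}\delta_{q+2}\lesssim\lambda_{q+1}^{-50\alpha}\delta_{q+2}$; the magnetic bracket is bounded in exactly the same way with $\bb_q$ in place of $\vv_q$.

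Then I would treat the $\mathcal R$-group using the inverse-divergence estimates of Appendix~\ref{Geo}: against a function of the form low-frequency amplitude times a phase oscillating at frequency $\lambda$, $\mathcal R$ behaves like $\lambda^{-1}$ and $\mathcal R\circ\Delta$ like $\lambda$, with $L^1$ bounds that rely on this explicit amplitude--oscillation structure ($L^1$ being outside the Calderón--Zygmund range). For $\whq,\wpq,\wcq$ the factor $\psi_k$ forces $\lambda\sim\lambda_{q+1}$, whereas $\wtq$ oscillates only at $\lambda\lesssim\sigma\bar{r}^{-1}\sim\lambda_{q+1}^{113/128}$; a time derivative costs $\tau_q^{-1}$ when it lands on an amplitude and the large factor $\sigma\mu=\lambda_{q+1}^{137/128}$ when it lands on the traveling wave $\phi_{\bar{k}}$. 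Feeding in the amplitude bounds of Propositions~\ref{a-estimate} and \ref{c:estimates-for-wpq-dpq} and the intermittency gains of Proposition~\ref{guji1} (e.g.\ $\|\phi'_{\bar{r}}\phi_k\phi_{\bar{\bar{k}}}\|_{L^1}\lesssim\bar{r}^{-1}\lambda_{q+1}^{-33/32}=\lambda_{q+1}^{-5/32}$, giving $\|\mathcal R\del_t\wpq\|_{L^1}\lesssim\lambda_{q+1}^{-1}\sigma\mu\,\ell_q^{-3}\lambda_{q+1}^{-5/32}=\lambda_{q+1}^{-11/128}\ell_q^{-3}$, and $\|\mathcal R\Delta\wtq\|_{L^1}\lesssim\lambda_{q+1}^{113/128}\mu^{-1}\ell_q^{-6}=\lambda_{q+1}^{-23/128}\ell_q^{-6}$), each of $\mathcal R\del_t\whq$, $\mathcal R\del_t\wpq$, $\mathcal R\del_t\wcq$, $\mathcal R\Delta\whq$, $\mathcal R\Delta\wpq$, $\mathcal R\Delta\wtq$, $\mathcal R\Delta\wcq$ is $\lesssim\lambda_{q+1}^{-c}\lambda_q^{O(1)}$ for some $c>0$, hence $\ll\lambda_{q+1}^{-50\alpha}\delta_{q+2}$. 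The magnetic estimate is word-for-word the same after replacing $\mathcal R$ by $\mathcal{R}_a$ and $\vv_q$ by $\bb_q$ and the trace-free products by antisymmetric ones, the extra $\mathbb{P}_H$ costing only a bounded factor.

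The main obstacle is precisely the large factor $\sigma\mu$ generated whenever $\del_t$ lands on the phase $\phi_{\bar{k}}$ inside $\whq,\wpq,\wcq$: it must be absorbed by the single spatial-derivative gain $\lambda_{q+1}^{-1}$ of $\mathcal R$ \emph{together} with the intermittency gain $\lambda_{q+1}^{-5/32}$ of the box flows, and it is this balance that pins down the choice $\sigma=\lambda_{q+1}^{1/128}$, $\mu=\lambda_{q+1}^{17/16}$, $r=\bar{r}=\lambda_{q+1}^{-14/16}$, $\bar{\bar{r}}=\lambda_{q+1}^{-5/16}$. The only other technical wrinkle is the aforementioned failure of $\mathcal R$ and $\mathcal{R}_a$ to be bounded on $L^1$, which forces each application to be routed through an explicit stationary-phase/commutator inverse-divergence lemma exploiting the amplitude-times-oscillation form of the flows rather than a naive Riesz-transform bound.
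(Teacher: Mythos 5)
Your proposal is correct and follows essentially the same route as the paper: the same algebraic reduction (using $\del_t\wsq-\Delta\wsq=0$ so that only $\mathcal R[\del_t(\whq+\wpq+\wcq)-\Delta(\whq+\wpq+\wtq+\wcq)]$ plus the Nash bracket survives), the same intermittency bound $\|\phi_{k,\bar k,\bar{\bar k}}\|_{L^1}\lesssim\lambda_{q+1}^{-33/32}$ for the Nash term, and the same inverse-divergence/amplitude–oscillation estimates with the identical parameter balance (your exponents $-11/128$ and $-23/128$ match the paper's \eqref{linear3} exactly). The only cosmetic difference is that you split the Nash term as $L^1\times L^\infty$ whereas the paper uses $\|\vv_q\|_{L^2}$ against the amplitudes and $\|\phi_{k,\bar k,\bar{\bar k}}\|_{L^1}$; both yield the same conclusion.
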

\begin{proof}
We deduce by Proposition \ref{a-estimate}-\ref{c:estimates-for-wpq-dpq} and Lemma \ref{holder1} that
\begin{align}\label{linear1}
&\|w_{q+1}  \otimes \vv_q+ \vv_q \otimes w_{q+1}\|_{L^1}\notag\\
\leq &\| \vv_q \otimes (\wcq+\wvq+\wlq+\wtq+\wsq)+(\wcq+\wvq+\wlq+\wtq+\wsq)  \otimes \vv_q\|_{L^1}\notag\\
&~+\|(\wpq+\whq ) \otimes \vv_q+ \vv_q \otimes (\wpq+\whq )\|_{L^1}\notag\\
\leq &\lambda_{q+1}^{-50\alpha}\delta_{q+2}+\|\vv_q\|_{L^2}\|a_{v,l,k}+a_{b,l,k}
+h^{1/2}_{b,q}\|_{L^2}\|\phi_{k,\bar{k},\bar{\bar{k}}}\|_{L^1}
\notag\\
\lesssim& \lambda_{q+1}^{-50\alpha}\delta_{q+2}
+\delta_{q+1}^{1/2}\lambda^{-\frac{33}{32}+\alpha}_{q+1}
\notag\\
\lesssim &\lambda_{q+1}^{-50\alpha}\delta_{q+2},
\end{align}
and
\begin{align}\label{linear2}
\|d_{q+1}  \otimes \bb_q+ \bb_q \otimes d_{q+1}\|_{L^1} \lesssim\lambda_{q+1}^{-50\alpha}\delta_{q+2}.
\end{align}

Since $\del_t \wsq-\Delta \wsq$=0, we obtain by Proposition \ref{guji1}-\ref{c:estimates-for-wpq-dpq} and Lemma \ref{holder2} that
\begin{align}\label{linear3}
&\|\mathcal R[( \del_t w_{q+1}-\del_t \wtq-\del_t \wvq-\del_t \wlq)-(\Delta w_{q+1}-\Delta \wvq-\Delta \wlq)]\|_{L^1}\notag\\
\leq&\|\mathcal R \del_t (\wpq+\wcq+\whq)\|_{L^1}+\|\mathcal R\Delta (\wpq+\wcq+\whq+\wtq ) \|_{L^1}\notag\\
\lesssim &  \ell^{-15}_q\mu\sigma\bar{\bar{r}}^{1/2}\lambda_{q+1}^{-1+5\alpha}\delta^{1/2}_{q+1} +(\ell^{-15}_q\lambda^{1+5\alpha}_{q+1}r\bar{\bar{r}}^{1/2}\delta^{1/2}_{q+1}
+\ell^{-15}_q\mu^{-1}r^{-1}\sigma
\lambda_{q+1}^{5\alpha}\delta_{q+1})\notag\\
\lesssim & \ell^{-15}_q\lambda_{q+1}^{-1-\frac{5}{32}+\frac{17}{16}+\frac{1}{128}+5\alpha}\delta^{1/2}_{q+1} +\ell^{-15}_q\lambda_{q+1}^{1-\frac{7}{8}-\frac{5}{32}+5\alpha}\delta_{q+1}
+\ell^{-15}_q\lambda_{q+1}^{-\frac{17}{16}+\frac{7}{8}+\frac{1}{128}+5\alpha}\delta_{q+1}\notag\\
\lesssim  &\lambda_{q+1}^{-50\alpha}\delta_{q+2}.
\end{align}
Collecting \eqref{linear1}-\eqref{linear3} yields \eqref{Rlinear1}. In the same way, we can prove \eqref{Mlinear1}. Thus, we complete the proof of Proposition \ref{Rlinear}.
\end{proof}

\begin{prop}[Estimate for $\Mosc$]\label{Mosclation}
\[ \|\Mosc+\mathcal{R}_a[(\del_t\dtq+\del_t\dvq)+(\del_t\dlq-\Delta\dlq-\Delta\dvq)]\|_{L^1}  \lesssim \lambda^{-50\alpha}_{q+1}\delta_{q+2}. \]
\end{prop}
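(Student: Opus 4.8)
The plan is to decompose $\Mosc = \mathcal{R}_a\mathbb{P}_H[\div(\wpq\otimes\dpq - \dpq\otimes\wpq) + \div(\text{lower-order terms}) + \div\MMM_q]$ and to show that the dominant interaction $\div(\wpq\otimes\dpq - \dpq\otimes\wpq)$ splits into a part that exactly cancels $\div\MMM_q$ up to acceptable errors (via the First Geometric Lemma applied to $M_b = -\MMM_q$) and a part whose divergence-form structure is precisely what the temporal flows $\dtq$, the inverse traveling wave flows $\dvq$, and the heat conduction flows $\dlq$ are designed to absorb. Concretely, I would first record that only the principal correctors $\wpq,\dpq$ (and possibly $\whq,\dhq$) contribute at leading order, since Remark~\ref{xiao} (estimate \eqref{xiao1}) shows all other perturbation pieces are of size $\lambda_{q+1}^{-50\alpha}\delta_{q+2}$ in $L^2$; their quadratic interactions with themselves or with $\wpq,\dpq$ are then bounded in $L^1$ by Cauchy--Schwarz and Proposition~\ref{c:estimates-for-wpq-dpq}, after applying the inverse divergence $\mathcal{R}_a$ (which gains a factor of roughly $\sigma^{-1}=\lambda_{q+1}^{-1/128}$, using that $\phi_{k,\bar k,\bar{\bar k}}$ oscillates at frequency $\sigma$ on the high-frequency factors) — these are all error terms.

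The heart of the argument is the antisymmetric bilinear term. Since $\dpq = \sum_{l;k\in\Lambda_b} a_{b,l,k}\psi_k\phi_{k,\bar k,\bar{\bar k}}\bar{\bar k}$ and $\wpqb = \sum_{l;k\in\Lambda_b} a_{b,l,k}\psi_k\phi_{k,\bar k,\bar{\bar k}}\bar k$, while $\wpqu$ is built on the disjoint frequency set $\Lambda_v$, the products $\wpq\otimes\dpq-\dpq\otimes\wpq$ localize: cross terms with $k\neq k'$ vanish by the disjoint-support property \eqref{suppkk'}, and the $\Lambda_v$--$\Lambda_b$ interaction is an error by the non-interaction of $\Lambda_v$ and $\Lambda_b$. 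For each fixed $k\in\Lambda_b$ the surviving contribution is $\sum_{l} a_{b,l,k}^2 \psi_k^2 \phi_{k,\bar k,\bar{\bar k}}^2 (\bar k\otimes\bar{\bar k}-\bar{\bar k}\otimes\bar k)$. I would then write $\psi_k^2 = 1 + \mathbb{P}_{>0}(\psi_k^2)$ and $\phi_{k,\bar k,\bar{\bar k}}^2 = \phi_k^2\phi_{\bar k}^2\phi_{\bar{\bar k}}^2$, and further split each squared factor into its mean plus fluctuation; the pure-mean part reproduces (by the First Geometric Lemma, since $\sum_l a_{b,l,k}^2 = \rho_{b,q}\sum_l\eta_l^2 a_{b,k}^2(M_b/\rho_{b,q})$ assembles $M_b = -\MMM_q$ on $I_l$, with the $\chi_b$ truncation and the $\eta_l^2$-vs-$\eta_l$ and $\rho_{b,q}$-vs-$\delta_{q+1}\ell_q^{\alpha/3}$ discrepancies handled as in \cite{2Beekie,13Nonuniqueness}) exactly $-\div\MMM_q$ after applying $\mathcal{R}_a\mathbb{P}_H$, while the fluctuation parts are the oscillation errors to be matched against the temporal, inverse traveling wave, and heat conduction flows.

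The matching itself is the main obstacle and the step I expect to be most delicate: one must verify that $\del_t\dtq$ cancels the term coming from the $\phi_{\bar k}^2$-fluctuation that is purely a function of time's translate (using $\phi_{\bar k}(x,t)=\phi_{\bar r}(\sigma\bar k\cdot x+\sigma\mu t)$ so that $\del_t = \sigma\mu\,\bar k\cdot\nabla$ on it, and the definition of $\dtq$ with its $1/\mu$ prefactor), that the inverse traveling wave flow $\dvq$ — with its $\del_t^{-1}\mathbb{P}_{>0}\phi_{\bar k}^2$ primitive — together with $\del_t\dvq - \Delta\dvq$ and the pairing with $\dlq$ (solving an associated heat equation) removes the remaining $x$-dependent fluctuations $\phi_k^2\phi_{\bar{\bar k}}^2$-type terms that do not depend on $t$, and that what survives after all these cancellations — the genuinely high-frequency, mean-zero leftovers with both $\psi_k^2$ and all $\phi$-fluctuations active, plus commutator terms where $\mathbb{P}_H\mathbb{P}_{>0}$ or $\mathcal{R}_a$ fails to commute with multiplication by $a_{b,l,k}^2$ — is bounded in $L^1$ by $\lambda_{q+1}^{-50\alpha}\delta_{q+2}$. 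For the last point I would use the standard improved Hölder/commutator estimates (Lemma~\ref{holder1}, Lemma~\ref{holder2}) exploiting the frequency gap $\sigma=\lambda_{q+1}^{1/128}\gg\ell_q^{-5}$ so that the slowly varying amplitudes $a_{b,l,k}^2$ effectively pass through $\mathcal{R}_a\mathbb{P}_{>0}$ at the cost of $\sigma^{-1}$, together with Proposition~\ref{guji1} for the $L^p$ sizes of the box flows and the parameter choices $\sigma=\lambda_{q+1}^{1/128}$, $\mu=\lambda_{q+1}^{17/16}$, $r=\bar r=\lambda_{q+1}^{-14/16}$, $\bar{\bar r}=\lambda_{q+1}^{-5/16}$; the bookkeeping is exactly the kind done in \eqref{linear3} and in the proof of \eqref{estimation-wlq}, just carried out for the oscillation rather than linear error.
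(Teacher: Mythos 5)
Your proposal is correct and follows essentially the same route as the paper's proof: isolating the principal $\Lambda_b$ self-interaction, cancelling $\MMM_q$ via the First Geometric Lemma on the mean part, matching the $\bar{k}\otimes\bar{\bar{k}}$ fluctuation against $\del_t\dtq$ through the transport identity for $\phi_{\bar k}^2$, absorbing the $\bar{\bar{k}}\otimes\bar{k}$ remainder with $\dvq$ and $\dlq$, and controlling the leftover high-frequency and commutator terms with Lemmas \ref{holder1}--\ref{holder2} and the stated parameter choices. This is exactly the decomposition into $M_{low,0},\dots,M_{low,4}$ carried out in the paper.
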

\begin{proof}
Firstly, a direct computation shows that
\begin{equation}
\begin{aligned}\label{mose1}
&\div[\Mosc+ \mathcal{R}_a[(\del_t\dtq+\del_t\dvq)+(\del_t\dlq-\Delta\dlq-\Delta\dvq)]]\\
=&\mathbb{P}_H\div[\wpqb\otimes\dpq-\dpq\otimes\wpqb + M_{low,0}+\MMM_q]+ (\del_t\dtq+\del_t\dvq)+(\del_t\dlq-\Delta\dlq-\Delta\dvq)\\
=&\mathbb{P}_H\div[\wpqb\otimes\dpq-\dpq\otimes\wpqb+\MMM_q]+\div M_{low,0}+ (\del_t\dtq+\del_t\dvq)+(\del_t\dlq-\Delta\dlq-\Delta\dvq),
\end{aligned}
\end{equation}
where $M_{low,0}$ is anti-symmetric such that
\begin{align}
M_{low,0}:=& (w_{q+1}-\wpq-\whq)\otimes (d_{q+1}-\dpq-\dhq)+(\wpq+\whq)\otimes (d_{q+1}-\dpq-\dhq)\notag\\
&+(w_{q+1}-\wpq-\whq)\otimes (\dpq+\dhq)
-(d_{q+1}-\dpq-\dhq)\otimes (w_{q+1}-\wpq-\whq)\notag\\
&-(d_{q+1}-\dpq-\dhq)\otimes (\wpq+\whq)-(\dpq+\dhq)\otimes (w_{q+1}-\wpq-\whq).
\end{align}

With aid of \eqref{suppkk'}, we have $\dpq\otimes\wpqu=\wpqu\otimes\dpq=\whq\otimes\dpq=\dhq\otimes\wpq=0$.  We can easily deduce by Proposition \ref{c:estimates-for-wpq-dpq} that
\begin{align}\label{mlow0}
\|M_{low,0}\|_{L^{1}}\lesssim \lambda^{-50\alpha}_{q+1} \delta_{q+2}.
\end{align}

Secondly, recalling the definitions of $\dvq,\dlq$ in \eqref{e:defn-dvq} and \eqref{e:defn-dlq}, we obtain that
\begin{align}\label{mose2}
&\del_t\dlq-\Delta\dlq-\Delta\dvq\notag\\
=~&\Big\{(\mu\sigma)^{-1}\mathbb{P}_{H}\mathbb{P}_{>0}
\sum_{l;k\in\Lambda_{b}}  (\del_t -\Delta )(a^2_{b,l,k})
\int_{0}^{t}e^{(t-\tau)\Delta}\Delta\div\big( (\del^{-1}_t\mathbb{P}_{>0} \phi^2_{{\bar{k}}})\phi^2_{k}\phi^2_{\bar{\bar{k}}}
\bar{\bar{k}}\otimes{\bar{k}}\big)\dd\tau
\notag\\
+&\mathbb{P}_{H}\mathbb{P}_{>0}\sum_{l;k\in\Lambda_{b}}  (\del_t -\Delta )(a^2_{b,l,k})
\int_{0}^{t}e^{(t-\tau)\Delta}\div\big(\phi^2_{k}\phi^2_{\bar{\bar{k}}}
\bar{\bar{k}}\otimes{\bar{k}}\big)\dd\tau\notag\\
-&(\mu\sigma)^{-1}\mathbb{P}_{H}\mathbb{P}_{>0}\sum_{l;k\in\Lambda_{b}}
 \Delta (a^2_{b,l,k})
\div\big((\del^{-1}_t\mathbb{P}_{>0}\phi^2_{{\bar{k}}})\phi^2_{k}\phi^2_{\bar{\bar{k}}}
\bar{\bar{k}}\otimes\bar{k} \big)\notag\\
-&
(\mu\sigma)^{-1}\mathbb{P}_{H}\mathbb{P}_{>0}
\sum_{l;k\in\Lambda_{b}}\sum_{i=1}^{3}  \partial_{x_i}(a^2_{b,l,k})
\partial_{x_i}\int_{0}^{t}e^{(t-\tau)\Delta}\Delta\div\big( (\del^{-1}_t\mathbb{P}_{>0} \phi^2_{{\bar{k}}} )\phi^2_{k}\phi^2_{{\bar{\bar{k}}}}
\bar{\bar{k}}\otimes{\bar{k}}\big)\dd\tau
\notag\\
-&\mathbb{P}_{H}\mathbb{P}_{>0}\sum_{l;k\in\Lambda_{b}} \sum_{i=1}^{3}  \partial_{x_i}(a^2_{b,l,k})
\partial_{x_i}\int_{0}^{t}e^{(t-\tau)\Delta}\div\big(\phi^2_{k}\phi^2_{\bar{\bar{k}}}
\bar{\bar{k}}\otimes{\bar{k}}\big)d\tau \notag\\
-&(\mu\sigma)^{-1}\mathbb{P}_{H}\mathbb{P}_{>0}\sum_{l;k\in\Lambda_{b}}
 \sum_{i=1}^{3}  \partial_{x_i} (a^2_{b,l,k})
\partial_{x_i}\div\big((\del^{-1}_t\mathbb{P}_{>0}\phi^2_{{\bar{k}}})\phi^2_{k}\phi^2_{\bar{\bar{k}}}
\bar{\bar{k}}\otimes\bar{k} \big)\Big\}\notag\\
+&\Big\{\mathbb{P}_{H}\mathbb{P}_{>0}\sum_{l;k\in\Lambda_{b}}  a^2_{b,l,k}
\div\big(\phi^2_{k}\phi^2_{ {\bar{\bar{k}}}}
\bar{\bar{k}}\otimes{ {\bar{k}}}\big)\Big\}\notag\\
=~&\div M_{low,1}+\div M_{high,1}.
\end{align}
On one hand hand, it's easily to deduce that
\begin{align}\label{mose2-}
\Big\|&\mathcal{R}_a\Big[(\mu\sigma)^{-1}\mathbb{P}_{H}\mathbb{P}_{>0}
\sum_{l;k\in\Lambda_{b}}  (\del_t -\Delta )(a^2_{b,l,k})
\int_{0}^{t}e^{(t-\tau)\Delta}\Delta\div\big( (\del^{-1}_t\mathbb{P}_{>0} \phi^2_{{\bar{k}}})\phi^2_{k}\phi^2_{\bar{\bar{k}}}
\bar{\bar{k}}\otimes{\bar{k}}\big)\dd\tau
\notag\\
+&\mathbb{P}_{H}\mathbb{P}_{>0}\sum_{l;k\in\Lambda_{b}}  (\del_t -\Delta )(a^2_{b,l,k})
\int_{0}^{t}e^{(t-\tau)\Delta}\div\big(\phi^2_{k}\phi^2_{\bar{\bar{k}}}
\bar{\bar{k}}\otimes{\bar{k}}\big)\dd\tau\notag\\
-&(\mu\sigma)^{-1}\mathbb{P}_{H}\mathbb{P}_{>0}\sum_{l;k\in\Lambda_{b}}
 \Delta (a^2_{b,l,k})
\div\big((\del^{-1}_t\mathbb{P}_{>0}\phi^2_{{\bar{k}}})\phi^2_{k}\phi^2_{\bar{\bar{k}}}
\bar{\bar{k}}\otimes\bar{k} \big)\Big]\Big\|_{L^1}\lesssim \lambda^{-50\alpha}_{q+1} \delta_{q+2}.
\end{align}
On the other hand, using the Leibniz rule
$$\partial_{x_i}f\cdot \partial_{x_i}g=\partial_{x_i}(\partial_{x_i}f\cdot  g)-\partial^2_{x_i}f\cdot g,$$
 one can easily deduce that
 $$\|\mathcal{R}_a\mathbb{P}_{H}\mathbb{P}_{>0}(\partial_{x_i}f\cdot \partial_{x_i}g)\|_{L^1}\lesssim \|(\partial_{x_i}f\cdot  g)\|_{L^{1+\alpha}}+ \|(\partial^2_{x_i}f\cdot  g)\|_{L^{1+\alpha}}.$$
 We deduce that
 \begin{align}\label{mose2--}
\Big\|&\mathcal{R}_a\Big[(\mu\sigma)^{-1}\mathbb{P}_{H}\mathbb{P}_{>0}
\sum_{l;k\in\Lambda_{b}}\sum_{i=1}^{3}  \partial_{x_i}(a^2_{b,l,k})
\partial_{x_i}\int_{0}^{t}e^{(t-\tau)\Delta}\Delta\div\big( (\del^{-1}_t\mathbb{P}_{>0} \phi^2_{{\bar{k}}} )\phi^2_{k}\phi^2_{{\bar{\bar{k}}}}
\bar{\bar{k}}\otimes{\bar{k}}\big)\dd\tau
\notag\\
+&\mathbb{P}_{H}\mathbb{P}_{>0}\sum_{l;k\in\Lambda_{b}} \sum_{i=1}^{3}  \partial_{x_i}(a^2_{b,l,k})
\partial_{x_i}\int_{0}^{t}e^{(t-\tau)\Delta}\div\big(\phi^2_{k}\phi^2_{\bar{\bar{k}}}
\bar{\bar{k}}\otimes{\bar{k}}\big)d\tau \notag\\
+&(\mu\sigma)^{-1}\mathbb{P}_{H}\mathbb{P}_{>0}\sum_{l;k\in\Lambda_{b}}
 \sum_{i=1}^{3}  \partial_{x_i} (a^2_{b,l,k})
\partial_{x_i}\div\big((\del^{-1}_t\mathbb{P}_{>0}\phi^2_{{\bar{k}}})\phi^2_{k}\phi^2_{\bar{\bar{k}}}
\bar{\bar{k}}\otimes\bar{k} \big)\Big]\Big\|_{L^1}\lesssim \lambda^{-50\alpha}_{q+1} \delta_{q+2}.
\end{align}
Combining \eqref{mose2} with \eqref{mose2-}-\eqref{mose2--}, we obtain
\begin{align}\label{mlow1buchong}
\|M_{low,1}\|_{L^{1}}\lesssim \lambda^{-50\alpha}_{q+1} \delta_{q+2}.
\end{align}

Thirdly, using the definitions of $\eta_l,\Lambda_{v},\Lambda_{b}$ and $\phi_{k,\bar{k},\bar{\bar{k}}}$, we have
\begin{align}
\sum_{l;k\in \Lambda_{v}} a_{v,l,k} \psi_k\phi_{k,\bar{k},\bar{\bar{k}}}
\cdot\sum_{l;k\in \Lambda_{b}} a_{b,l,k} \psi_k\phi_{k,\bar{k},\bar{\bar{k}}}
&=0,
\end{align}
and
\begin{align}
\sum_{l;k\in \Lambda_{b}} a_{b,l,k} \psi_k\phi_{k,\bar{k},\bar{\bar{k}}}
\cdot\sum_{l;k\in \Lambda_{b}} a_{b,l,k} \psi_k\phi_{k,\bar{k},\bar{\bar{k}}}
&=\sum_{l;k\in \Lambda_{b}} a^2_{b,l,k} \psi^2_k\phi^2_{k,\bar{k},\bar{\bar{k}}}.
\end{align}
By $\div=\div\mathbb{P}_{>0}=\mathbb{P}_{>0}\div$, we show that
\begin{align}\label{Mose2}
&\mathbb{P}_H \div\Big[ \wpqb\otimes\dpq-\dpq\otimes\wpqb + \MMM_q\Big]+(\del_t\dtq+\del_t\dvq+\div M_{high,1} )\notag\\
=&\mathbb{P}_H \div\Big[ \sum_{l;k\in \Lambda_{b}} a^2_{b,l,k} \psi^2_k(\phi^2_k\phi^2_{\bar{k}}\phi^2_{\bar{\bar{k}}})
(\bar{k}\otimes\bar{\bar{k}}-\bar{\bar{k}}\otimes\bar{k})+\MMM_q\Big]
+(\del_t\dtq+\del_t\dvq+\div M_{high,1} )\notag\\
 =&\mathbb{P}_H \div\Big[ \sum_{l;k\in \Lambda_{b}} a^2_{b,l,k}(\bar{k}\otimes\bar{\bar{k}}-\bar{\bar{k}}\otimes\bar{k})+ \MMM_q\Big]+ \mathbb{P}_H \div\Big[\sum_{l;k\in \Lambda_{b}} a^2_{b,l,k}
\mathbb{P}_{>0}(\psi^2_k)(\phi^2_k\phi^2_{\bar{k}}\phi^2_{\bar{\bar{k}}})
(\bar{k}\otimes\bar{\bar{k}}-\bar{\bar{k}}\otimes\bar{k})\Big]\notag\\
&+ \mathbb{P}_H \div\Big[\sum_{l;k\in \Lambda_{b}} a^2_{b,l,k}
\mathbb{P}_{>0}(\phi^2_k\phi^2_{\bar{k}}\phi^2_{\bar{\bar{k}}})
(\bar{k}\otimes\bar{\bar{k}}-\bar{\bar{k}}\otimes\bar{k})\Big]+(\del_t\dtq+\del_t\dvq+\div M_{high,1} )\notag\\
 =&\Big\{0+ \mathbb{P}_H \mathbb{P}_{>0}\sum_{l;k\in \Lambda_{b}}\mathbb{P}_{>0}(\psi^2_k)
\div\Big[ a^2_{b,l,k}
(\phi^2_k\phi^2_{\bar{k}}\phi^2_{\bar{\bar{k}}})
(\bar{k}\otimes\bar{\bar{k}}-\bar{\bar{k}}\otimes\bar{k})\Big]\notag\\
& +\mathbb{P}_H\mathbb{P}_{>0}\sum_{l;k\in \Lambda_{b}}
\mathbb{P}_{>0}(\phi^2_k\phi^2_{\bar{k}}\phi^2_{\bar{\bar{k}}})\div[ a^2_{b,l,k}
(\bar{k}\otimes\bar{\bar{k}}-\bar{\bar{k}}\otimes\bar{k})]\Big\}\notag\\
& +\Big\{\mathbb{P}_H\mathbb{P}_{>0}\sum_{l;k\in \Lambda_{b}}a^2_{b,l,k}
\div[(\phi^2_k\phi^2_{\bar{k}}\phi^2_{\bar{\bar{k}}})
(\bar{k}\otimes\bar{\bar{k}})]+\del_t\dtq\Big\}\notag\\
& +\Big\{\mathbb{P}_H\mathbb{P}_{>0}\sum_{l;k\in \Lambda_{b}}-a^2_{b,l,k}\div
[(\phi^2_k\phi^2_{\bar{k}}\phi^2_{\bar{\bar{k}}})(\bar{\bar{k}}\otimes\bar{k})]
  +\del_t\dvq+\div M_{high,1}\Big\}\notag\\
 :=&\div M_{low,2}+\div M_{low,3}+\div M_{low,4},
\end{align}
where we have used Lemma \ref{first L} in the third equality. Combining \eqref{mose1},~\eqref{mose2} and \eqref{Mose2} shows
\begin{align}\label{Mose1}
&\div[\Mosc+\mathcal{R}_a[(\del_t\dtq+\del_t\dvq)+(\del_t\dlq-\Delta\dlq-\Delta\dvq)]]\notag\\
=&\div (M_{low,0}+ M_{low,1}+M_{low,2}+ M_{low,3}+M_{low,4}).
\end{align}


We begin to estimate $M_{low,2}$. Using Lemma \ref{holder2} and Proposition \ref{a-estimate}, we deduce that
\begin{align}\label{mlow1}
\|M_{low,2}\|_{L^1}\lesssim \ell^{-15}_qr^{-1}\lambda^{-1+\sigma}_{q+1}
\ll\lambda^{-50\alpha}_{q+1}\delta_{q+2}.
\end{align}
Next, we consider $M_{low,3}$. Recalling that
$$\del_t\dtq=-\tfrac{1}{{\mu}}\mathbb{P}_{H}\mathbb{P}_{>0}\sum_{l;k\in\Lambda_{b}}
\Big\{\del_t(a^2_{b,l,k})
(\phi^2_k\phi^2_{\bar{k}}\phi^2_{\bar{\bar{k}}})
\bar{\bar{k}}+a^2_{b,l,k}\del_t\phi^2_{\bar{k}}\cdot
 \phi^2_k\phi^2_{\bar{\bar{k}}}
 \bar{\bar{k}}\Big\},$$
by $$\del_t[\phi^2_{\bar{k}}(x,t)]\cdot
 \phi^2_k\phi^2_{\bar{\bar{k}}}
 \bar{\bar{k}}= \mu\div[(\phi^2_k\phi^2_{\bar{k}}\phi^2_{\bar{\bar{k}}})(\bar{k}\otimes\bar{\bar{k}})],$$ we have
\begin{align}
M_{low,3}
&=\mathcal{R}_a \mathbb{P}_{H}\mathbb{P}_{>0}\sum_{l;k\in \Lambda_{b}} a^2_{b,l,k}
\div[(\phi^2_k\phi^2_{\bar{k}}\phi^2_{\bar{\bar{k}}})(\bar{k}\otimes\bar{\bar{k}})]+\mathcal{R}_a\del_t\dtq \notag\\
&=-\mathcal{R}_a\mathbb{P}_{H}\mathbb{P}_{>0}\sum_{l;k\in\Lambda_{b}}
{{\mu}}^{-1}\del_t(a^2_{b,l,k})
(\phi^2_k\phi^2_{\bar{k}}\phi^2_{\bar{\bar{k}}})
\bar{\bar{k}}.
\end{align}
So we can easily deduce that
\begin{align}\label{mlow2}
\|M_{low,3}\|_{L^1}\lesssim \mu^{-1}\|\del_t(a^2_{b,l,k})\|_{L^{\infty}}
\|\phi^2_k\phi^2_{\bar{k}}\phi^2_{\bar{\bar{k}}}\|_{L^{1+\alpha}}\ll\lambda^{-50\alpha}_{q+1}\delta_{q+2}.
\end{align}
Finally, we need to estimate $M_{low,4}$. Using the definition of $M_{high,1}$ in \eqref{mose2} and $\dvq$ in \eqref{e:defn-dvq}, we deduce that
\begin{align}
 M_{low,4}=&\mathcal{R}_a\mathbb{P}_H\mathbb{P}_{>0}\sum_{l;k\in \Lambda_{b}}-a^2_{b,l,k}\div
[(\mathbb{P}_{>0}(\phi^2_{\bar{k}})+1)\cdot\phi^2_k\phi^2_{\bar{\bar{k}}}(\bar{\bar{k}}\otimes\bar{k})]
  +\del_t\dvq+M_{high,1} \notag\\
  =&\mathcal{R}_a\mathbb{P}_H\mathbb{P}_{>0}\sum_{l;k\in \Lambda_{b}}-a^2_{b,l,k}\div
[\mathbb{P}_{>0}(\phi^2_{\bar{k}})\phi^2_k\phi^2_{\bar{\bar{k}}}(\bar{\bar{k}}\otimes\bar{k})]
  +\del_t\dvq\notag\\
  =&(\mu\sigma)^{-1}\mathcal{R}_a\mathbb{P}_H\mathbb{P}_{>0}\sum_{l;k\in \Lambda_{b}}\del_t(a^2_{b,l,k})\div
[(\del^{-1}_t\mathbb{P}_{>0}\phi^2_{\bar{k}})\phi^2_k\phi^2_{\bar{\bar{k}}}
(\bar{\bar{k}}\otimes\bar{k})]\notag\\
  =&(\mu\sigma)^{-1}\mathcal{R}_a\mathbb{P}_H\mathbb{P}_{>0}\sum_{l;k\in \Lambda_{b}}(\del_ta^2_{b,l,k})
(\del^{-1}_t\mathbb{P}_{>0}\phi^2_{\bar{k}})\phi^2_k\div[\phi^2_{\bar{\bar{k}}}(
\bar{\bar{k}}\otimes\bar{k})].
\end{align}
 One deduces by Proposition \ref{a-estimate} that
\begin{align}\label{mlow3}
\|M_{low,4}\|_{L^1}&\leq {(\mu\sigma)}^{-1}\|\del_t(a^2_{b,l,k})\|_{L^{\infty}}\| (\del^{-1}_t\mathbb{P}_{>0}\phi^2_{\bar{k}})\phi^2_k\div(\phi^2_{\bar{\bar{k}}})
\bar{\bar{k}}\otimes\bar{k}\|_{L^{1+\alpha}} \notag\\
&\lesssim  \ell^{-15}_q\lambda^{2\alpha}_{q+1}\mu^{-1} \bar{\bar{r}}^{-1}\notag\\
&\ll\lambda^{-50\alpha}_{q+1}\delta_{q+2}.
\end{align}
Putting \eqref{mlow0},\eqref{mlow1buchong},\eqref{mlow1},\eqref{mlow2} and \eqref{mlow3} into \eqref{Mose1}, we obtain
 $$\|\Mosc+\mathcal{R}_a[(\del_t\dtq+\del_t\dvq)+(\del_t\dlq-\Delta\dlq-\Delta\dvq)]\|_{L^1} \ll \lambda^{-50\alpha}_{q+1}\delta_{q+2}.$$
 This completes the proof of Propositon \ref{Mosclation}.
 \end{proof}

\begin{prop}[Estimate for $\Rosc$]\label{Rosclation}
\[ \|\Rosc+\mathcal{R}[(\del_t\wtq+\del_t\wvq)+(\del_t\wlq-\Delta\wlq-\Delta\wvq) ]\|_{L^1} \lesssim \lambda^{-50\alpha}_{q+1}\delta_{q+2}, \]
where $P_v$ is defined in \eqref{Pu}.
\end{prop}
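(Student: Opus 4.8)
The plan is to mirror the structure of the proof of Proposition~\ref{Mosclation}, replacing anti-symmetric tensors and $\mathcal{R}_a$ by symmetric trace-free tensors and $\mathcal{R}$, and keeping track of the pressure gradient $\nabla P_v$ which is precisely what allows us to discard the non-divergence-free portions. First I would take the divergence of $\Rosc+\mathcal{R}[(\del_t\wtq+\del_t\wvq)+(\del_t\wlq-\Delta\wlq-\Delta\wvq)]$. Since $\mathcal{R}$ is a right inverse of $\div$ on mean-free fields and $\mathbb{P}_{>0}$ is harmless here, this reduces to controlling
\[
\mathbb{P}_{>0}\div\big[ w_{q+1}\mathring\otimes w_{q+1}-d_{q+1}\mathring\otimes d_{q+1}+\RRR_q\big]-\nabla P_v+(\del_t\wtq+\del_t\wvq)+(\del_t\wlq-\Delta\wlq-\Delta\wvq).
\]
Expanding $w_{q+1}=\wpq+\whq+(\text{small flows})$ and $d_{q+1}=\dpq+\dhq+(\text{small flows})$, all cross terms involving at least one ``small'' flow from Remark~\ref{xiao} are collected into a low-order symmetric tensor $R_{low,0}$ with $\|R_{low,0}\|_{L^1}\lesssim\lambda_{q+1}^{-50\alpha}\delta_{q+2}$ by Proposition~\ref{c:estimates-for-wpq-dpq} and Lemma~\ref{holder1}; the disjointness \eqref{suppkk'} kills the remaining unwanted products (e.g. $\wpqu\otimes\wpqb$ survives but $\wpqu\cdot\dpq$, $\whq\cdot\wpq$, etc. vanish).

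Next I would isolate the genuinely oscillatory principal interaction. Using $\psi_k^2=\mathbb{P}_{>0}(\psi_k^2)+1$ and $\phi_{k,\bar k,\bar{\bar k}}^2=\phi_k^2\phi_{\bar k}^2\phi_{\bar{\bar k}}^2$, we write
\[
\wpqu\mathring\otimes\wpqu-\dpq\mathring\otimes\dpq
=\sum_{l;k\in\Lambda_v}a_{v,l,k}^2\,\bar k\otimes\bar k
+\sum_{l;k\in\Lambda_b}a_{b,l,k}^2(\bar k\otimes\bar k-\bar{\bar k}\otimes\bar{\bar k})+(\text{oscillatory remainder}).
\]
By the definition of $R_v$ (see \eqref{e:energy-q-i-gap}) and the Second Geometric Lemma, $\sum_{l;k\in\Lambda_v}a_{v,l,k}^2\bar k\otimes\bar k$ together with $\sum_{l;k\in\Lambda_b}a_{b,l,k}^2(\bar k\otimes\bar k-\bar{\bar k}\otimes\bar{\bar k})$ and $\RRR_q$ combine — after subtracting the pressure term $\nabla P_v$, where $P_v$ in \eqref{Pu} absorbs exactly the mean-value/trace pieces — into $\div$ of something that cancels against the trace-free structure, leaving only terms of the form $\mathbb{P}_{>0}(\psi_k^2)\,\div[\cdots]$ and $\mathbb{P}_{>0}(\phi_k^2\phi_{\bar k}^2\phi_{\bar{\bar k}}^2)\,\div[a^2\,(\cdots)]$, both of which are low-order by Lemma~\ref{holder2} because they are products of a high-frequency mean-free factor with a comparatively low-frequency one. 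The two ``high-frequency'' survivors — $\sum a_{b,l,k}^2\div[\phi_k^2\phi_{\bar k}^2\phi_{\bar{\bar k}}^2(\bar k\otimes\bar{\bar k})]$ and $-\sum a_{b,l,k}^2\div[\phi_k^2\phi_{\bar k}^2\phi_{\bar{\bar k}}^2(\bar{\bar k}\otimes\bar k)]$, plus the velocity analogue $\sum_{k\in\Lambda_v}$ — are designed to be cancelled by $\del_t\wtq$ and by $\del_t\wvq+(\del_t\wlq-\Delta\wlq-\Delta\wvq)$ respectively, using the identities $\del_t[\phi_{\bar k}^2]\,\phi_k^2\phi_{\bar{\bar k}}^2\bar{\bar k}=\mu\div[\phi_k^2\phi_{\bar k}^2\phi_{\bar{\bar k}}^2(\bar k\otimes\bar{\bar k})]$ and $\mathbb{P}_{>0}\phi_{\bar k}^2=(\mu\sigma)^{-1}\del_t(\del^{-1}_t\mathbb{P}_{>0}\phi_{\bar k}^2)$, exactly as in \eqref{mlow2}--\eqref{mlow3}. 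What remains after these cancellations is $\div$ of a sum $R_{low,0}+R_{low,1}+R_{low,2}+R_{low,3}+R_{low,4}$ of symmetric tensors, each estimated in $L^1$ by $\lambda_{q+1}^{-50\alpha}\delta_{q+2}$ via Proposition~\ref{a-estimate}, Proposition~\ref{guji1}, Lemma~\ref{holder1} and Lemma~\ref{holder2}, with the parameter choices $\sigma=\lambda_{q+1}^{1/128}$, $\mu=\lambda_{q+1}^{17/16}$, $r=\bar r=\lambda_{q+1}^{-14/16}$, $\bar{\bar r}=\lambda_{q+1}^{-5/16}$ making the relevant powers of $\lambda_{q+1}$ negative. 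Applying $\mathcal{R}$ (which gains a derivative, or rather a $W^{N,1}\to W^{N+1,1}$-type bound, and is bounded $L^{1+\alpha}\to L^1$ on the mean-free high-frequency pieces) then yields the claimed $L^1$ estimate.

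\textbf{Main obstacle.} The delicate point is the bookkeeping of the pressure term $\nabla P_v$ and the trace corrections: unlike the magnetic case, $\Rosc$ involves trace-free tensors $\mathring\otimes$, so one must check that the traces produced by $\wpqu\mathring\otimes\wpqu-\dpq\mathring\otimes\dpq$, by the geometric-lemma decomposition, and by the temporal/inverse-traveling-wave flows are all consistently swept into $P_v$ (as defined in \eqref{Pu}) and into $p_{q+1}$, so that what is left is genuinely $\div$ of a small symmetric trace-free remainder and not merely $\div$ of something modulo a gradient. A second technical nuisance is that the inverse-traveling-wave and heat-conduction flows $\wvq,\wlq$ only cancel the $\Lambda_v$-part of the surviving high-frequency $\div[\phi_k^2\phi_{\bar k}^2\phi_{\bar{\bar k}}^2(\bar{\bar k}\otimes\bar k)]$-type error through a time-integral against the heat semigroup, so one needs the smoothing estimates from \cite{book} (as in \eqref{wlq1}--\eqref{wlq2}) to control the commutators $[\,\del_t-\Delta,\,a_{b,l,k}^2\,]$ and $[\partial_{x_i},\,a_{b,l,k}^2\,]$ exactly as in \eqref{mose2-}--\eqref{mose2--}; the Leibniz trick $\partial_{x_i}f\cdot\partial_{x_i}g=\partial_{x_i}(\partial_{x_i}f\cdot g)-\partial^2_{x_i}f\cdot g$ together with $L^{1+\alpha}$ interpolation handles the resulting loss. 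Everything else is a routine repetition of the computations already carried out for $\Mosc$ and $\Mlinear$.
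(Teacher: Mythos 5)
Your proposal is correct and follows essentially the same route as the paper's own proof: the same splitting into $R_{low,0},\dots,R_{low,4}$ plus a high-frequency part, the same use of the disjoint supports \eqref{suppkk'}, the Second Geometric Lemma, absorption of the low-frequency mean into $\nabla P_v$, the cancellation identities against $\del_t\wtq$, $\del_t\wvq$ and the heat-conduction flow, and the $L^{1+\alpha}$/inverse-divergence gain from Lemma \ref{holder2}. Only minor bookkeeping slips: $\wpqu\otimes\wpqb$ does \emph{not} survive (it vanishes by \eqref{suppkk'}, and must, since it has no geometric cancellation), the left-hand side of your displayed identity should be $\wpq\mathring\otimes\wpq-\dpq\mathring\otimes\dpq$ so that $\wpqb\otimes\wpqb$ supplies the $\sum_{l;k\in\Lambda_b}a_{b,l,k}^2\,\bar k\otimes\bar k$ term, and the surviving $\bar{\bar k}\otimes\bar{\bar k}$ error cancelled by $\wvq,\wlq$ is the $\Lambda_b$-part coming from $\dpq$, not a $\Lambda_v$-part.
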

\begin{proof}
Firstly, since $\whq=\dhq$ and $\wpqu\otimes\wpqb=\whq\otimes\wpq=\dhq\otimes\dpq=0$, we have
\begin{align}\label{rosc1}
w_{q+1}\otimes w_{q+1}-d_{q+1}\otimes d_{q+1}=\wpq\otimes\wpq-\dpq\otimes\dpq +R_{low,0},
\end{align}
where
\begin{align}
R_{low,0}:=&(w_{q+1}-\wpq-\whq)\otimes (w_{q+1}-\wpq-\whq)+(\wpq+\whq)\otimes (w_{q+1}-\wpq-\whq)\notag\\
&+(w_{q+1}-\wpq-\whq)\otimes (\wpq+\whq)-(d_{q+1}-\dpq-\dhq)\otimes (d_{q+1}-\dpq-\dhq)\notag\\
&-(\dpq+\dhq)\otimes (d_{q+1}-\dpq-\dhq)-(d_{q+1}-\dpq-\dhq)\otimes (\dpq+\dhq) .
\end{align}
We deduce by  Proposition \ref{c:estimates-for-wpq-dpq} that
\begin{align}\label{rlow0}
\|\mathcal{R}\div R_{low,0}\|_{L^{1}}\lesssim \lambda^{-50\alpha}_{q+1} \delta_{q+2}.
\end{align}

Secondly, straightforward calculations show
\begin{align}\label{R-ose2}
&\del_t\wlq-\Delta\wlq-\Delta\wvq\notag\\
=&\Big\{(\mu\sigma)^{-1}\mathbb{P}_{H}\mathbb{P}_{>0}
\sum_{l;k\in\Lambda_{b}}  (\del_t -\Delta )(a^2_{b,l,k})
\int_{0}^{t}e^{(t-\tau)\Delta}\Delta\div\big( (\del^{-1}_t\mathbb{P}_{>0} \phi^2_{{\bar{k}}} )\phi^2_{k}\phi^2_{\bar{\bar{k}}}
\bar{\bar{k}}\otimes\bar{\bar{k}}\big)\dd\tau
\notag\\
&+\mathbb{P}_{H}\mathbb{P}_{>0}\sum_{l;k\in\Lambda_{b}}  (\del_t -\Delta )(a^2_{b,l,k})
\int_{0}^{t}e^{(t-\tau)\Delta}\div\big(\phi^2_{k}\phi^2_{\bar{\bar{k}}}
\bar{\bar{k}}\otimes\bar{\bar{k}}\big)d\tau\notag\\
-&(\mu\sigma)^{-1}\mathbb{P}_{H}\mathbb{P}_{>0}\sum_{l;k\in\Lambda_{b}}
 \Delta (a^2_{b,l,k})
\div\big((\del^{-1}_t\mathbb{P}_{>0}\phi^2_{{\bar{k}}})\phi^2_{k}\phi^2_{\bar{\bar{k}}}
\bar{\bar{k}}\otimes\bar{\bar{k}} \big)\notag\\
&-
(\mu\sigma)^{-1}\mathbb{P}_{H}\mathbb{P}_{>0}
\sum_{l;k\in\Lambda_{b}}\sum_{i=1}^{3}  \partial_{x_i}(a^2_{b,l,k})
\partial_{x_i}\int_{0}^{t}e^{(t-\tau)\Delta}\Delta\div\big( (\del^{-1}_t\mathbb{P}_{>0} \phi^2_{{\bar{k}}} )\phi^2_{k}\phi^2_{\bar{\bar{k}}}
\bar{\bar{k}}\otimes\bar{\bar{k}}\big)\dd\tau
\notag\\
&-\mathbb{P}_{H}\mathbb{P}_{>0}\sum_{l;k\in\Lambda_{b}} \sum_{i=1}^{3}  \partial_{x_i}(a^2_{b,l,k})
\partial_{x_i}\int_{0}^{t}e^{(t-\tau)\Delta}\div\big(\phi^2_{k}\phi^2_{\bar{\bar{k}}}
\bar{\bar{k}}\otimes\bar{\bar{k}}\big)\dd\tau\notag\\
-&(\mu\sigma)^{-1}\mathbb{P}_{H}\mathbb{P}_{>0}\sum_{l;k\in\Lambda_{b}}
 \sum_{i=1}^{3}  \partial_{x_i} (a^2_{b,l,k})
\partial_{x_i}\div\big((\del^{-1}_t\mathbb{P}_{>0}\phi^2_{{\bar{k}}})\phi^2_{k}\phi^2_{\bar{\bar{k}}}
\bar{\bar{k}}\otimes\bar{\bar{k}} \big)\Big\}\notag\\
&+\Big\{\mathbb{P}_{H}\mathbb{P}_{>0}\sum_{l;k\in\Lambda_{b}}  a^2_{b,l,k}
\div\big(\phi^2_{k}\phi^2_{ {\bar{\bar{k}}}}
\bar{\bar{k}}\otimes\bar{\bar{k}}\big)\Big\}\notag\\
=&\div R_{low,1}+\div R_{high,1}.
\end{align}
On one hand, it's easily to deduce that
\begin{align}\label{Rose2-}
\Big\|&\mathcal{R}\Big[(\mu\sigma)^{-1}\mathbb{P}_{H}\mathbb{P}_{>0}
\sum_{l;k\in\Lambda_{b}}  (\del_t -\Delta )(a^2_{b,l,k})
\int_{0}^{t}e^{(t-\tau)\Delta}\Delta\div\big( (\del^{-1}_t\mathbb{P}_{>0} \phi^2_{{\bar{k}}})\phi^2_{k}\phi^2_{\bar{\bar{k}}}
\bar{\bar{k}}\otimes{\bar{\bar{k}}}\big)\dd\tau
\notag\\
+&\mathbb{P}_{H}\mathbb{P}_{>0}\sum_{l;k\in\Lambda_{b}}  (\del_t -\Delta )(a^2_{b,l,k})
\int_{0}^{t}e^{(t-\tau)\Delta}\div\big(\phi^2_{k}\phi^2_{\bar{\bar{k}}}
\bar{\bar{k}}\otimes{\bar{\bar{k}}}\big)\dd\tau\notag\\
-&(\mu\sigma)^{-1}\mathbb{P}_{H}\mathbb{P}_{>0}\sum_{l;k\in\Lambda_{b}}
 \Delta (a^2_{b,l,k})
\div\big((\del^{-1}_t\mathbb{P}_{>0}\phi^2_{{\bar{k}}})\phi^2_{k}\phi^2_{\bar{\bar{k}}}
\bar{\bar{k}}\otimes\bar{\bar{k}} \big)\Big]\Big\|_{L^1}\lesssim \lambda^{-50\alpha}_{q+1} \delta_{q+2}.
\end{align}
On the other hand, using the Leibniz rule
$$\partial_{x_i}f\cdot \partial_{x_i}g=\partial_{x_i}(\partial_{x_i}f\cdot  g)-\partial^2_{x_i}f\cdot g,$$
 one can easily deduce that
 $$\|\mathcal{R}\mathbb{P}_{>0}(\partial_{x_i}f\cdot \partial_{x_i}g)\|_{L^1}\lesssim \|(\partial_{x_i}f\cdot  g)\|_{L^{1+\alpha}}+ \|(\partial^2_{x_i}f\cdot  g)\|_{L^{1+\alpha}}.$$
 We deduce that
 \begin{align}\label{Rose2--}
\Big\|&\mathcal{R}\Big[(\mu\sigma)^{-1}\mathbb{P}_{H}\mathbb{P}_{>0}
\sum_{l;k\in\Lambda_{b}}\sum_{i=1}^{3}  \partial_{x_i}(a^2_{b,l,k})
\partial_{x_i}\int_{0}^{t}e^{(t-\tau)\Delta}\Delta\div\big( (\del^{-1}_t\mathbb{P}_{>0} \phi^2_{{\bar{k}}} )\phi^2_{k}\phi^2_{{\bar{\bar{k}}}}
\bar{\bar{\bar{k}}}\otimes{\bar{k}}\big)\dd\tau
\notag\\
+&\mathbb{P}_{H}\mathbb{P}_{>0}\sum_{l;k\in\Lambda_{b}} \sum_{i=1}^{3}  \partial_{x_i}(a^2_{b,l,k})
\partial_{x_i}\int_{0}^{t}e^{(t-\tau)\Delta}\div\big(\phi^2_{k}\phi^2_{\bar{\bar{k}}}
\bar{\bar{\bar{k}}}\otimes{\bar{k}}\big)d\tau \notag\\
+&(\mu\sigma)^{-1}\mathbb{P}_{H}\mathbb{P}_{>0}\sum_{l;k\in\Lambda_{b}}
 \sum_{i=1}^{3}  \partial_{x_i} (a^2_{b,l,k})
\partial_{x_i}\div\big((\del^{-1}_t\mathbb{P}_{>0}\phi^2_{{\bar{k}}})\phi^2_{k}\phi^2_{\bar{\bar{k}}}
\bar{\bar{k}}\otimes\bar{\bar{k}} \big)\Big]\Big\|_{L^1}\lesssim \lambda^{-50\alpha}_{q+1} \delta_{q+2}.
\end{align}
Combining \eqref{R-ose2} with \eqref{Rose2-}-\eqref{Rose2--}, we obtain
\begin{align}\label{rlow1buchong}
\|R_{low,1}\|_{L^{1}}\lesssim \lambda^{-50\alpha}_{q+1} \delta_{q+2}.
\end{align}

Thirdly, we deduce that
\begin{align}\label{Pu}
& \div( \wpq\otimes\wpq-\dpq\otimes\dpq + \RRR_q )+(\del_t\wtq+\del_t\wtq+\div R_{high,1})\notag\\
=&\div\Big[ \sum_{l;k \in \Lambda_{v}} a^2_{v,l,k} \psi^2_k
(\phi^2_k\phi^2_{\bar{k}}\phi^2_{\bar{\bar{k}}})(\bar{k}\otimes\bar{k})
+\RRR_q\Big]+(\del_t\wtq+\del_t\wtq+\div R_{high,1})\notag\\
& +\div\Big[\sum_{l;k \in \Lambda_{b}}a^2_{b,l,k} \psi^2_k
 (\phi^2_k\phi^2_{\bar{k}}\phi^2_{\bar{\bar{k}}})(\bar{k}\otimes\bar{k})
-\sum_{l;k \in \Lambda_{b}} a^2_{b,l,k} \psi^2_k
 (\phi^2_k\phi^2_{\bar{k}}\phi^2_{\bar{\bar{k}}})(\bar{\bar{k}}\otimes\bar{\bar{k}})
\Big]\notag\\
=&\div\Big[\sum_{l;k \in \Lambda_{v}}a^2_{v,l,k} \mathbb{P}_{>0}(\psi^2_k)
(\phi^2_k\phi^2_{\bar{k}}\phi^2_{\bar{\bar{k}}})(\bar{k}\otimes\bar{k})
+a^2_{v,l,k} \mathbb{P}_{>0}
(\phi^2_k\phi^2_{\bar{k}}\phi^2_{\bar{\bar{k}}})(\bar{k}\otimes\bar{k})\Big]+\nabla\sum_{l} \eta^2_{l}\chi_{v}\rho_{v,q}\notag\\
&+\div\Big[\sum_{l;k \in \Lambda_{b}}a^2_{b,l,k} \mathbb{P}_{>0}(\psi^2_k)
(\phi^2_k\phi^2_{\bar{k}}\phi^2_{\bar{\bar{k}}})(\bar{k}\otimes\bar{k})
+a^2_{b,l,k} \mathbb{P}_{>0}
(\phi^2_k\phi^2_{\bar{k}}\phi^2_{\bar{\bar{k}}})(\bar{k}\otimes\bar{k})\Big]
+\del_t\wtq\notag\\
&-\div\Big[\sum_{l;k \in \Lambda_{b}}a^2_{b,l,k} \mathbb{P}_{>0}(\psi^2_k)
(\phi^2_k\phi^2_{\bar{k}}\phi^2_{\bar{\bar{k}}})(\bar{\bar{k}}\otimes\bar{\bar{k}})
+a^2_{b,l,k} \mathbb{P}_{>0}
(\phi^2_k\phi^2_{\bar{k}}\phi^2_{\bar{\bar{k}}})(\bar{\bar{k}}\otimes\bar{\bar{k}})\Big]
+\del_t\wvq+\div R_{high,1}\notag\\
=&\div\Big[\sum_{l;k \in \Lambda_{v}}a^2_{v,l,k} \mathbb{P}_{>0}(\psi^2_k)
(\phi^2_k\phi^2_{\bar{k}}\phi^2_{\bar{\bar{k}}})(\bar{k}\otimes\bar{k})
+\sum_{l;k \in \Lambda_{b}}a^2_{b,l,k} \mathbb{P}_{>0}(\psi^2_k)
(\phi^2_k\phi^2_{\bar{k}}\phi^2_{\bar{\bar{k}}})
(\bar{k}\otimes\bar{k}-\bar{\bar{k}}\otimes\bar{\bar{k}})\Big]\notag\\
&
+\div\Big[\sum_{l;k \in \Lambda_{v}}a^2_{v,l,k} \mathbb{P}_{>0}
(\phi^2_k\phi^2_{\bar{k}}\phi^2_{\bar{\bar{k}}})(\bar{k}\otimes\bar{k})
+\sum_{l;k \in \Lambda_{b}}a^2_{b,l,k} \mathbb{P}_{>0}
(\phi_k\phi_{\bar{k}}\phi_{\bar{\bar{k}}})(\bar{k}\otimes\bar{k})+\mathcal{R}\del_t\wtq\Big]\notag\\
&+\div\Big[\sum_{l;k \in \Lambda_{b}}-a^2_{b,l,k} \mathbb{P}_{>0}
(\phi^2_k\phi^2_{\bar{k}}\phi^2_{\bar{\bar{k}}})(\bar{\bar{k}}\otimes\bar{\bar{k}})
+\mathcal{R}\del_t\wvq+R_{high,1}\Big]+\nabla\sum_{l} \eta^2_{l}\chi_{v}\rho_{v,q}
\notag\\
=&\Big\{\mathbb{P}_{>0}\Big[\sum_{l;k \in \Lambda_{v}}\mathbb{P}_{>0}(\psi^2_k)\div [a^2_{v,l,k}
(\phi^2_k\phi^2_{\bar{k}}\phi^2_{\bar{\bar{k}}})(\bar{k}\otimes\bar{k})]
+\sum_{l;k \in \Lambda_{b}}\mathbb{P}_{>0}(\psi^2_k)\div [a^2_{b,l,k}
(\phi_k\phi_{\bar{k}}\phi_{\bar{\bar{k}}})
(\bar{k}\otimes\bar{k}-\bar{\bar{k}}\otimes\bar{\bar{k}})]\Big]\notag\\
&+\mathbb{P}_{>0}\Big[\sum_{l;k \in \Lambda_{v}} \mathbb{P}_{>0}
(\phi^2_k\phi^2_{\bar{k}}\phi^2_{\bar{\bar{k}}})\div [a^2_{v,l,k}(\bar{k}\otimes\bar{k})]
+\sum_{l;k \in \Lambda_{b}}\mathbb{P}_{>0}
(\phi^2_k\phi^2_{\bar{k}}\phi^2_{\bar{\bar{k}}})\div[ a^2_{b,l,k}(\bar{k}\otimes\bar{k}-\bar{\bar{k}}\otimes\bar{\bar{k}})]\Big]\Big\}\notag\\
&
+\Big\{\mathbb{P}_{>0}\Big[\sum_{l;k \in \Lambda_{v}} a^2_{v,l,k} \div
[(\phi^2_k\phi^2_{\bar{k}}\phi^2_{\bar{\bar{k}}}) (\bar{k}\otimes\bar{k})]
+\sum_{l;k \in \Lambda_{b}}  a^2_{b,l,k}\div[
(\phi^2_k\phi^2_{\bar{k}}\phi^2_{\bar{\bar{k}}})(\bar{k}\otimes\bar{k})]+ \del_t\wtq\Big]\Big\}\notag\\
&+\Big\{\mathbb{P}_{>0}\Big[\sum_{l;k \in \Lambda_{b}}-a^2_{b,l,k}   \div[
(\phi^2_k\phi^2_{\bar{k}}\phi^2_{\bar{\bar{k}}}) (\bar{\bar{k}}\otimes\bar{\bar{k}})]+\del_t\wvq+\div R_{high,1}\Big]\Big\}+\nabla\sum_{l} \eta^2_{l}\chi_{v}\rho_{v,q}
\notag\\
:=&\div R_{low,2}+\div R_{low,3}+\div R_{low,4}+\nabla\sum_{l} \eta^2_{l}\chi_{v}\rho_{v,q}\notag\\
:=&\div R_{low,2}+\mathbb{P}_{H}\div R_{low,3}+\mathbb{P}_{H}\div R_{low,4}+\nabla(\frac{\div\div}{\Delta}R_{low,3}+\frac{\div\div}{\Delta}R_{low,4}+\sum_{l} \eta^2_{l}\chi_{v}\rho_{v,q})\notag\\
:=&\div R_{low,2}+\mathbb{P}_{H}\div R_{low,3}+\mathbb{P}_{H}\div R_{low,4}+\nabla P_v ,
\end{align}
where we use Lemma \ref{first S} with $R_v=\RRR_q-\sum_{k\in \Lambda_{v}}a_{b,l,k}(\bar{k}\otimes\bar{k}
-\bar{\bar{k}}\otimes\bar{\bar{k}})$ in the second equality. Thus, by the definition of $\Rosc$, we deduce
\begin{align}\label{RR}
\div\Big[\Rosc +\mathcal{R}[(\del_t\wtq+\del_t\wvq)+(\del_t\wlq-\Delta\wlq-\Delta\wvq) ]\Big]\notag\\
=\div\Big[R_{low,0}+  R_{low,1}+\mathcal{R}\div  (R_{low,2})+\mathcal{R}\mathbb{P}_{H}\div (R_{low,3}+ R_{low,4})\Big].
\end{align}
We need to estimate $\mathcal{R}\div R_{low,2}$, $\mathcal{R}\mathbb{P}_{H}\div R_{low,3}$ and $\mathcal{R}\mathbb{P}_{H}\div R_{low,4}$, respectively.

To begin with, we estimate $\mathcal{R}\div R_{low,2}$.  Thanks to Lemma \ref{holder2} and Proposition \ref{a-estimate},  one obtains  that
\begin{align}\label{rlow1}
\|\mathcal{R}\div R_{low,2}\|_{L^1}\lesssim \ell^{-15}_{q}\sigma r^{-1}\lambda^{-1+\alpha}_{q+1}
+\ell^{-15}_{q}\sigma^{-1}\lambda^{\alpha}_{q+1}
\ll\lambda^{-50\alpha}_{q+1}\delta_{q+2}.
\end{align}
Next, we need to $ \mathcal{R}\mathbb{P}_{H}\div R_{low,3}$. Recalling that
\begin{align}
\del_t\wtq=&-\mathbb{P}_{H}\mathbb{P}_{>0}\sum_{l;k\in\Lambda_{v}}
\tfrac{1}{\mu}\del_t(a^2_{v,l,k})
(\phi^2_k\phi^2_{\bar{k}}\phi^2_{\bar{\bar{k}}})
{\bar{k}}-\mathbb{P}_{H}\mathbb{P}_{>0}\sum_{l;k\in\Lambda_{b}}
\tfrac{1}{\mu}\del_t(a^2_{b,l,k})
(\phi^2_k\phi^2_{\bar{k}}\phi^2_{\bar{\bar{k}}})
{\bar{k}}\notag\\
&-\mathbb{P}_{H}\mathbb{P}_{>0}\sum_{l;k\in\Lambda_{v}}
a^2_{v,l,k}\div
(\phi^2_k\phi^2_{\bar{k}}\phi^2_{\bar{\bar{k}}})
\bar{k}\otimes {\bar{k}}
-\mathbb{P}_{H}\mathbb{P}_{>0}\sum_{l;k\in\Lambda_{b}}
a^2_{b,l,k}\div
(\phi^2_k\phi^2_{\bar{k}}\phi^2_{\bar{\bar{k}}})
\bar{k}\otimes {\bar{k}}.
\end{align}
Owning to $\div=\div\mathbb{P}_{>0}=\mathbb{P}_{>0}\div$, we have
\begin{align}
\mathcal{R}\mathbb{P}_{H}\div R_{low,3}=&\mathcal{R}\mathbb{P}_{H}\mathbb{P}_{>0}\sum_{l;k\in \Lambda_{v}}\big(a^2_{v,l,k}
\div
(\phi^2_k\phi^2_{\bar{k}}\phi^2_{\bar{\bar{k}}})
\bar{k}\otimes {\bar{k}}\big)\notag\\
&~+\mathcal{R}\mathbb{P}_{H}\mathbb{P}_{>0}\sum_{l;k\in \Lambda_{b}}\big(a^2_{b,l,k}
\div
(\phi^2_k\phi^2_{\bar{k}}\phi^2_{\bar{\bar{k}}})
\bar{k}\otimes {\bar{k}}\big)+\mathcal{R}\del_t\wtq\notag\\
=&-\mathcal{R}\mathbb{P}_{H}\mathbb{P}_{>0}\sum_{l;k\in\Lambda_{v}}
\tfrac{1}{\mu}\del_t(a^2_{v,l,k})
(\phi^2_k\phi^2_{\bar{k}}\phi^2_{\bar{\bar{k}}})
{\bar{k}}\notag\\
&
-\mathcal{R}\mathbb{P}_{H}\mathbb{P}_{>0}\sum_{l;k\in\Lambda_{b}}
\tfrac{1}{\mu}\del_t(a^2_{b,l,k})
(\phi^2_k\phi^2_{\bar{k}}\phi^2_{\bar{\bar{k}}})
{\bar{k}}.
\end{align}
Hence, one can easily deduce that
\begin{align}\label{rlow2}
\|\mathcal{R}\mathbb{P}_{H}\div R_{low,3}\|_{L^{1}}\lesssim  {\mu}^{-1}\|\del_t(a^2_{v,l,k},a^2_{b,l,k})\|_{L^{\infty}}
\|\phi^2_k\phi^2_{\bar{k}}\phi^2_{\bar{\bar{k}}}\|_{L^{1+\alpha}}\ll\lambda^{-5\alpha}_{q+1}\delta_{q+2}.
\end{align}
Finally, we turn to estimate $\mathcal{R}\mathbb{P}_{H}\div R_{low,4}$. Using the definition of $R_{high,1}$ in \eqref{R-ose2} and $\wvq$ in \eqref{e:defn-wvq}, we rewrite $\mathcal{R}\mathbb{P}_{H}\div R_{low,4}$ as
\begin{align}
 \mathcal{R}\mathbb{P}_{H}\div R_{low,4}
=&\mathcal{R}\mathbb{P}_H\mathbb{P}_{>0}\sum_{l;k\in \Lambda_{b}}-a^2_{b,l,k}\div
[(\mathbb{P}_{>0}\phi^2_{\bar{k}}+1)\phi^2_k\phi^2_{\bar{\bar{k}}}(\bar{\bar{k}}\otimes\bar{\bar{k}})]
  +\del_t\wvq+\div R_{high,1} \notag\\
  =&\mathcal{R} \mathbb{P}_H\mathbb{P}_{>0}\sum_{l;k\in \Lambda_{b}}-a^2_{b,l,k}\div[
\mathbb{P}_{>0}(\phi^2_{\bar{k}})\phi^2_k\phi^2_{\bar{\bar{k}}}(\bar{\bar{k}}\otimes\bar{\bar{k}})]
  +\del_t\wvq\notag\\
  =&(\mu\sigma)^{-1}\mathcal{R}\mathbb{P}_H\mathbb{P}_{>0}\sum_{l;k\in \Lambda_{b}}\del_t(a^2_{b,l,k})\div[
(\del^{-1}_t\mathbb{P}_{>0} \phi^2_{\bar{k}})\phi^2_k\phi^2_{\bar{\bar{k}}}(
\bar{\bar{k}}\otimes\bar{\bar{k}})]\notag\\
  =&(\mu\sigma)^{-1}\mathcal{R}\mathbb{P}_H\mathbb{P}_{>0}\sum_{l;k\in \Lambda_{b}}\del_t(a^2_{b,l,k})
(\del^{-1}_t\mathbb{P}_{>0}\phi^2_{\bar{k}})\phi^2_k\div[\phi^2_{\bar{\bar{k}}}(
\bar{\bar{k}}\otimes\bar{\bar{k}})].
\end{align}
 One deduces by Proposition \ref{a-estimate} that
\begin{align}\label{rlow3}
\|\mathcal{R}\mathbb{P}_{H}\div R_{low,4}\|_{L^1}&\leq ({\mu\sigma})^{-1}\|\del_t(a^2_{b,l,k})\|_{L^{\infty}}\| (\del^{-1}_t\mathbb{P}_{>0}\phi^2_{\bar{k}})\phi^2_k\div[\phi^2_{\bar{\bar{k}}}
(\bar{\bar{k}}\otimes\bar{\bar{k}})]\|_{L^{1+\alpha}} \notag\\
&\lesssim  \ell^{-15}_q\lambda^{2\alpha}_{q+1}\mu^{-1} \bar{\bar{r}}^{-1}\notag\\
&\ll\lambda^{-50\alpha}_{q+1}\delta_{q+2}.
\end{align}
Plugging \eqref{rlow0},\eqref{rlow1buchong},\eqref{rlow1},\eqref{rlow2} and \eqref{rlow3} into \eqref{RR}, we obtain
 $$\|\Rosc+\mathcal{R}[(\del_t\wtq+\del_t\wvq)+(\del_t\wlq-\Delta\wlq-\Delta\wvq) ]\|_{L^1}  \ll \lambda^{-50\alpha}_{q+1}\delta_{q+2}.$$
 This completes the proof of Proposition \ref{Rosclation}.
 \end{proof}

Collecting Proposition \ref{Rlinear}--Proposition \ref{Rosclation}, we obtain that
 \begin{align*}
    \|M_{q+1}\|_{L^1} \leq & \|M_{osc}+\mathcal{R}_a[(\del_t\dtq+\del_t\dvq)+(\del_t\dlq-\Delta\dlq-\Delta\dvq)]\|_{L^1} \\ & \|\Mlinear-\mathcal{R}_a[(\del_t\dtq+\del_t\dvq)+(\del_t\dlq-\Delta\dlq-\Delta\dvq)]\|_{L^1}\\
    \leq& \lambda^{-40\alpha}_{q+1}\delta_{q+2},
    \end{align*}
    and
     \begin{align*}
    \|\RR_{q+1}\|_{L^1}\leq & \|\Rosc+\mathcal{R}[(\del_t\wtq+\del_t\wvq)+(\del_t\wlq-\Delta\wlq-\Delta\wvq) ]\|_{L^1}\notag\\ &+ \|\Rlinear- \mathcal{R}[(\del_t\wtq+\del_t\wvq)+(\del_t\wlq-\Delta\wlq-\Delta\wvq)]\|_{L^1}\notag\\
   \leq & \lambda^{-40\alpha}_{q+1}\delta_{q+2}.
\end{align*}
This fact shows that \eqref{e:RR_q-C0} holds by replacing $q$ with $q+1$.

\section{Energy iteration}
\label{s:energy-iteration}

In this section, we show that \eqref{e:energy-q-estimate} and \eqref{e:energy-h-estimate} hold  with $q$ replaced by $q+1$.
\begin{prop}[Energy estimate]
\label{p:energy}For all $t\in[1-\tau_q,T]$, we have
    \[\bigg|e(t)-\int_{\mathbb T^3} (|v_{q+1}|^2+|b_{q+1}|^2)\dd x -\tfrac{\delta_{q+2}}2\bigg| \le \frac{\delta_{q+2}}{100} .\]
\end{prop}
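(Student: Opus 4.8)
The plan is to expand $\int_{\mathbb T^3}(|v_{q+1}|^2+|b_{q+1}|^2)\dd x$ in terms of $v_{q+1}=\vv_q+w_{q+1}$, $b_{q+1}=\bb_q+d_{q+1}$ and to isolate the leading contribution, which by design sits inside the energy gap $\rho_q(t)$ of \eqref{e0:energy-gap} together with the term $E(t)$. Writing
\[
\int_{\mathbb T^3}(|v_{q+1}|^2+|b_{q+1}|^2)\dd x=\int_{\mathbb T^3}(|\vv_q|^2+|\bb_q|^2)\dd x+2\int_{\mathbb T^3}(\vv_q\cdot w_{q+1}+\bb_q\cdot d_{q+1})\dd x+\int_{\mathbb T^3}(|w_{q+1}|^2+|d_{q+1}|^2)\dd x,
\]
I would first dispatch the cross term $\int_{\mathbb T^3}(\vv_q\cdot w_{q+1}+\bb_q\cdot d_{q+1})\dd x$. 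For the small flows $\wtq,\wvq,\wlq,\wcq,\wsq$ (and their $d$-analogues) this is $\lesssim\|(\vv_q,\bb_q)\|_{L^2}\cdot\lambda_{q+1}^{-50\alpha}\delta_{q+2}\ll\delta_{q+2}$ by Proposition \ref{c:estimates-for-wpq-dpq} and $\|(\vv_q,\bb_q)\|_{L^2}\lesssim1$; for $\wpqu,\wpqb,\dpq,\whq,\dhq$ one integrates by parts against the high oscillation of $\psi_k\phi_{k,\bar k,\bar{\bar k}}$ (all of whose nonzero frequencies are $\gtrsim\bar{\bar r}^{-1}\sigma=\lambda_{q+1}^{41/128}$), using $\|(\vv_q,\bb_q)\|_{H^3}\lesssim\lambda_q^5$ from Proposition \ref{estimate-vvq} and the amplitude bounds of Proposition \ref{a-estimate}; the gain in $\lambda_{q+1}$ absorbs the loss $\ell_q^{-N}=\lambda_q^{6N}$ because $b$ is taken large, leaving a remainder $\ll\delta_{q+2}$.

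Next I would expand $\int_{\mathbb T^3}(|w_{q+1}|^2+|d_{q+1}|^2)\dd x$. By the disjointness \eqref{suppkk'} of the supports of $\phi_{k,\bar k,\bar{\bar k}}$ over $k\in\Lambda$, by $\Lambda_v,\Lambda_b,\Lambda_s$ being mutually disjoint, and by the pairwise disjoint supports of $\{\eta_l\}$, all cross terms among $\wpqu,\wpqb,\dpq,\whq,\dhq$ vanish, and the contributions of the five small flows (alone or paired with the $\delta_{q+1}^{1/2}$-size principal/helicity flows) are $\lesssim\lambda_{q+1}^{-50\alpha}\delta_{q+2}$ by Proposition \ref{c:estimates-for-wpq-dpq}. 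Since $E(t)=\int_{\mathbb T^3}(|\wpqb|^2+|\dpq|^2+|\whq|^2+|\dhq|^2)\dd x$ by definition, the only remaining genuine contribution is $\int_{\mathbb T^3}|\wpqu|^2\dd x=\sum_{l;k\in\Lambda_v}\int_{\mathbb T^3}a_{v,l,k}^2\psi_k^2\phi_{k,\bar k,\bar{\bar k}}^2\dd x$. Now $\psi_k^2\phi_{k,\bar k,\bar{\bar k}}^2=1+\mathbb P_{>0}(\psi_k^2\phi_{k,\bar k,\bar{\bar k}}^2)$ (the mean is the $t$-independent constant $\|\psi_k\|_{L^2}^2\|\phi_{k,\bar k,\bar{\bar k}}\|_{L^2}^2=1$), and the mean-zero remainder oscillates at frequency $\gtrsim\lambda_{q+1}^{41/128}$ while $a_{v,l,k}^2$ oscillates at frequency $\lesssim\ell_q^{-5}$; writing $\mathbb P_{>0}(\cdot)=\Delta(\cdot)$ and integrating by parts twice gives $\int_{\mathbb T^3}a_{v,l,k}^2\,\mathbb P_{>0}(\psi_k^2\phi_{k,\bar k,\bar{\bar k}}^2)\dd x\ll\delta_{q+2}$. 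Hence $\int_{\mathbb T^3}|\wpqu|^2\dd x=\sum_{l;k\in\Lambda_v}\int_{\mathbb T^3}a_{v,l,k}^2\dd x+(\text{err})$.

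It remains to evaluate $\sum_{l;k\in\Lambda_v}\int_{\mathbb T^3}a_{v,l,k}^2\dd x$. The construction of $\wpqu$ via the Second Geometric Lemma gives the matrix identity $\sum_{l;k\in\Lambda_v}a_{v,l,k}^2(\bar k\otimes\bar k)=\sum_l\eta_l^2(\rho_{v,q}\,\mathrm{Id}-R_v)$; taking the trace and using that $R_v=\RRR_q-\sum_{l;k\in\Lambda_b}a_{b,l,k}^2(\bar k\otimes\bar k-\bar{\bar k}\otimes\bar{\bar k})$ is trace-free yields $\sum_{l;k\in\Lambda_v}a_{v,l,k}^2=3\sum_l\eta_l^2\rho_{v,q}$ pointwise. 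Since $\rho_{v,q}=\rho_{q,0}\chi_v$ with $\rho_{q,0}$ a function of $t$ only, and since for $t\in[1-\tau_q,T]$ one has $\aleph(t)=0$ and $\eta_{-1}(t)=0$, the definition \eqref{e:energy-q-i-gap} gives $\rho_{q,0}(t)\sum_l\int_{\mathbb T^3}\eta_l^2\chi_v\dd x=\rho_q(t)$, hence $\sum_{l;k\in\Lambda_v}\int_{\mathbb T^3}a_{v,l,k}^2\dd x=3\rho_q(t)$. Assembling everything with \eqref{e0:energy-gap},
\[
\int_{\mathbb T^3}(|v_{q+1}|^2+|b_{q+1}|^2)\dd x=\int_{\mathbb T^3}(|\vv_q|^2+|\bb_q|^2)\dd x+3\rho_q(t)+E(t)+(\text{err})=e(t)-\tfrac{\delta_{q+2}}{2}+(\text{err}),
\]
and choosing $a$ (hence $\lambda_{q+1}$) large and $\alpha$ small forces $|(\text{err})|\le\delta_{q+2}/100$. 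The main obstacle is the bookkeeping of the error terms: one must check that every oscillation gain (powers of $\sigma$, $\bar{\bar r}^{-1}$, $\mu$, $\lambda_{q+1}$) beats the accumulated derivative losses $\ell_q^{-N}=\lambda_q^{6N}$ of the low-frequency amplitudes, and that the hypotheses of Lemma \ref{first S} (namely $\mathrm{Id}-R_v/\rho_{v,q}\in B_{\epsilon_v}(\mathrm{Id})$ pointwise, guaranteed by the cutoff $\chi_v$) together with the positivity and the size bounds $\tfrac1{10}\delta_{q+1}\le3\rho_q\le4\delta_{q+1}$ remain valid throughout $[1-\tau_q,T]$.
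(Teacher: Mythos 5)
Your proposal is correct and follows essentially the same route as the paper: expand the energy of $v_{q+1}=\vv_q+w_{q+1}$, $b_{q+1}=\bb_q+d_{q+1}$, kill the linear cross terms and the small flows by smallness or high-frequency integration by parts, reduce the quadratic principal term via the trace of the Second Geometric Lemma identity (using that $R_v$ is trace-free) to $3\sum_l\eta_l^2\rho_{v,q}$, and recover $3\rho_q(t)+E(t)=e(t)-\int(|\vv_q|^2+|\bb_q|^2)\dd x-\tfrac{\delta_{q+2}}{2}$ from the normalization \eqref{e:energy-q-i-gap}. The only (harmless) imprecision is your claim that all nonzero frequencies of the box flows are $\gtrsim\lambda_{q+1}^{41/128}$; the minimal nonzero frequency of $\mathbb P_{>0}(\phi^2_{k,\bar k,\bar{\bar k}})$ is only $\sigma=\lambda_{q+1}^{1/128}$, which still dominates $\ell_q^{-5}$ since $b$ is large, so the integration-by-parts gain goes through exactly as in \eqref{high frequence1}.
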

\begin{proof}
For $t\in[1-\tau_q,T]$, the total energy error can be rewritten as follows:
\begin{align}\label{energy0}
 &e(t)-\int_{\mathbb T^3} (|v_{q+1}|^2+|b_{q+1}|^2)\dd x-\frac{\delta_{q+2}}2\notag\\
=& [e(t)-\int_{\mathbb T^3}( |\vv_q|^2+|\bb_q|^2)\dd x -\tfrac{\delta_{q+2}}2] -\int_{\mathbb T^3} (|w_{q+1}|^2+|d_{q+1}|^2)\dd x\notag- 2 \int_{\mathbb T^3} (w_{q+1}\cdot \vv_q +d_{q+1}\cdot \bb_q) \dd x\notag\\
=&[e(t)-\int_{\mathbb T^3}(|\vv_q|^2+|\bb_q|^2 )\dd x -\tfrac{\delta_{q+2}}2]+e_{low,0} \notag\\
&-\int_{\mathbb T^3} (\wpq+\whq)\cdot(\wpq+\whq)+(\dpq+\dhq)\cdot(\dpq+\dhq)\dd x,
\end{align}
where
\begin{align}
e_{low,0}:=&-\int_{\mathbb T^3}\big((\wpq+\whq)\cdot(\wvq+\wlq+\wtq+\wcq+\wsq)\\
&+(\wvq+\wlq+\wtq+\wcq+\wsq)\cdot w_{q+1}\notag\\
&+(\dpq+\dhq)\cdot(\dvq+\dlq+\dtq+\dcq+\dsq)\\
&+(\dvq+\dlq+\dtq+\dcq+\dsq)\cdot d_{q+1}\big)\dd x\notag\\
&-2 \int_{\mathbb T^3} w_{q+1}\cdot \vv_q +d_{q+1}\cdot \bb_q \dd x.
\end{align}
For the last term on the right-hand side of equality \eqref{energy0}, using $\wpqu\cdot\wpqb=\wpq\cdot\whq=\dpq\cdot\dhq=0$ and $\tr\RRR_q=0$, we have
\begin{align}\label{energy2}
    &\int_{\mathbb T^3} (\wpq+\whq)\cdot(\wpq+\whq)+(\dpq+\dhq)\cdot(\dpq+\dhq) \dd x\notag\\
    =& \int_{\mathbb T^3} \tr (\wpqu\otimes \wpqu)+(|\wpqb|^2+|\dpq|^2+|\dhq|^2 +|\whq|^2) \dd x \notag\\
    = &\int_{\mathbb T^3} \tr\Big[\sum_{l } \eta^2_{l}\rho_{v,q} \sum_{k \in \Lambda_v} a^2_{k} \psi^2_k\phi^2_{k,\bar{k},\bar{\bar{k}}}\bar{k}\otimes \bar{k}\Big]  \dd x\notag \\
    &+\int_{\mathbb T^3}\Big(|\wpqb|^2+|\dpq|^2+|\dhq|^2 +|\whq|^2\Big) \dd x \notag \\
    =&
    \int_{\mathbb T^3} \tr\Big[\sum_{l } \eta^2_{l}(\rho_{v,q}{\rm Id}-\RRR_q) \Big]  \dd x+\int_{\mathbb T^3}\Big(|\wpqb|^2+|\dpq|^2+|\dhq|^2 +|\whq|^2\Big) \dd x\notag \\
     &+\int_{\mathbb T^3} \tr\Big[\sum_{l;k \in \Lambda_{v }} \eta^2_{l}\rho_{v,q} a^2_{k}  [\mathbb{P}_{>0}( \psi^2_k)\phi^2_{k,\bar{k},\bar{\bar{k}}}+\mathbb{P}_{>0}\phi^2_{k,\bar{k},\bar{\bar{k}}}]\bar{k}\otimes \bar{k}\Big]\dd x\notag \\
     =&
    3\int_{\mathbb T^3} \sum_{l } \eta^2_{l}\rho_{v,q}   \dd x+\int_{\mathbb T^3}\Big(|\wpqb|^2+|\dpq|^2+|\dhq|^2 +|\whq|^2\Big) \dd x\notag \\
     &+\int_{\mathbb T^3} \tr\Big[\sum_{l;k \in \Lambda_{v }} a^2_{v,l,k}  [\mathbb{P}_{>0}( \psi^2_k)\phi^2_{k,\bar{k},\bar{\bar{k}}}+\mathbb{P}_{>0}\phi^2_{k,\bar{k},\bar{\bar{k}}}]\bar{k}\otimes \bar{k}\Big]\dd x.
\end{align}
Since $\eta_{-1}(t)=\aleph(t)=0$ for $t\in[1-\tau_q,T]$, we rewrite $ \rho_{v,q}$ as
\begin{align}
 \rho_{v,q} (t) &=\chi_{v}\Big( \delta_{q+1}\aleph(t)+\frac{ \rho_q(t)(1-\aleph(t))}{ \eta_{-1}(t) + \sum_{l= 1}^{N_q} \int_{\mathbb T^3}\eta^2_l\chi_{v} ( x,t)\dd x }\Big)\notag\\
 &=\frac{ \chi_{v}\rho_q(t)}{  \sum_{l= 1}^{N_q} \int_{\mathbb T^3}\eta^2_l\chi_{v} (\tilde x,t)\dd \tilde x }.
\end{align}
Recalling the definition of $\rho_q(t)$ in \eqref{e0:energy-gap}, we deduce that
\begin{align}\label{energy3}
&3\int_{\mathbb T^3} \sum_{l } \eta^2_{l}\rho_{v,q}   \dd x
     +\int_{\mathbb T^3}(|\wpqb|^2+|\dpq|^2+|\dhq|^2 +|\whq|^2) \dd x\notag \\
=&3\rho_q(t)+E(t)=e(t) - \int_{\mathbb T^3} |\vv_q|^2+|\bb_q|^2 \dd x - \frac{\delta_{q+2}}2.
\end{align}
\eqref{energy3} helps us  rewrite \eqref{energy2} as
\begin{align}\label{e:energy-wpq+dpq}
&\int_{\mathbb T^3} (\wpq+\whq)\cdot(\wpq+\whq)+(\dpq+\dhq)\cdot(\dpq+\dhq) \dd x\notag\\
=&e(t) - \int_{\mathbb T^3} |\vv_q|^2+|\bb_q|^2 \dd x - \tfrac{\delta_{q+2}}2\notag\\
&+\tr\int_{\mathbb T^3}\sum_{l;k \in \Lambda_{v}} a^2_{v,l,k} [\mathbb{P}_{>0}(\psi^2_k)
     (\phi^2_{k,\bar{k},\bar{\bar{k}}})+\mathbb{P}_{>0}
     (\phi^2_{k,\bar{k},\bar{\bar{k}}})]\bar{k}\otimes \bar{k}\dd x.
\end{align}

Next, using
$$\Big|\int_{\mathbb T^3}f\mathbb{P}_{\geq c}gdx\Big|=\Big|\int_{\mathbb T^3}|\nabla|^Lf|\nabla|^{-L}
\mathbb{P}_{\geq c}gdx\Big|\leq c^{-L}\|g\|_{L^2}\|f\|_{H^L}$$
with $L$ sufficiently large, we obtain
\begin{align}\label{high frequence1}
&\Big|\tr\int_{\mathbb T^3}\sum_{l;k \in \Lambda_{v}} a^2_{v,l,k} [\mathbb{P}_{>0}(\psi^2_k)
     (\phi^2_{k,\bar{k},\bar{\bar{k}}})+\mathbb{P}_{>0}
     (\phi^2_{k,\bar{k},\bar{\bar{k}}})]\bar{k}\otimes \bar{k}\dd x\Big|\notag\\
\lesssim& {\lambda}^{L(\frac{1}{128}+\frac{7}{8})}_{q+1}{\lambda}^{-L}_{q+1}+\ell^{-5L}_{q}{\lambda}^{-\frac{L}{128}}_{q+1}\ll \frac{1}{10000}\delta_{q+2}
\end{align}
and
\begin{align}\label{energy5}
\Big|\int_{\mathbb T^3} w_{q+1}\cdot \vv_q+d_{q+1}\cdot \bb_q \dd x\Big|
\lesssim \lambda^{-5\alpha}_{q+1}\delta_{q+2}\ll\frac{1}{10000}\delta_{q+2}.
 \end{align}

From Remark \ref{xiao}, we deduce that
 \begin{align}\label{energy6}
&\Big|\int_{\mathbb T^3} (\wpq+\whq)\cdot(\wlq+\wtq+\wcq+\wsq)+(\wlq+\wtq+\wcq+\wsq)\cdot w_{q+1}\notag\\
&+(\dpq+\dhq)\cdot(\dlq+\dtq+\dcq+\dsq)+(\dlq+\dtq+\dcq+\dsq)\cdot d_{q+1}\dd x\Big|\notag\\
\lesssim &\lambda^{-50\alpha}_{q+1}\delta_{q+2}\ll\frac{1}{10000}\delta_{q+2}.
 \end{align}
Combining  the above inequality with \eqref{energy5}, we show that
\begin{align}\label{small terms-elow}
|e_{low}|\leq\frac{1}{2000}\delta_{q+2}.
 \end{align}
Finally,  putting \eqref{e:energy-wpq+dpq},~\eqref{high frequence1} and
  \eqref{small terms-elow} into \eqref{energy0}, we have \begin{align}\label{energy4}
  &\bigg|e(t)-\tfrac{\delta_{q+2}}2-\int_{\mathbb T^3} (|v_{q+1}|^2+|b_{q+1}|^2)\dd x \bigg|\notag\\
  =&\bigg|\Big[e(t)-\int_{\mathbb T^3}(|\vv_q|^2+|\bb_q|^2) \dd x -\tfrac{\delta_{q+2}}2\Big] \notag\\
&-\int_{\mathbb T^3} (\wpq+\whq)\cdot(\wpq+\whq)+(\dpq+\dhq)\cdot(\dpq+\dhq)\dd x+e_{low}\bigg|\notag\\
=&\bigg|\Big[e(t)-\int_{\mathbb T^3}(|\vv_q|^2+|\bb_q|^2 )\dd x -\tfrac{\delta_{q+2}}2\Big] \notag\\
&-\Big[e(t) - \int_{\mathbb T^3} (|\vv_q|^2+|\bb_q|^2) \dd x - \frac{\delta_{q+2}}2\Big]-\int_{\mathbb T^3} \tr\Big(\sum_{l;k \in \Lambda_{v} } a^2_{v,l,k} \mathbb{P}_{>0}(\phi^2_{k,\bar{k},\bar{\bar{k}}})\bar{k}\otimes \bar{k}\Big)\dd x +e_{low}\bigg|\notag\\
\leq &\Big|\int_{\mathbb T^3} \tr\Big(\sum_{l;k \in \Lambda_{v} } a^2_{v,l,k} \mathbb{P}_{>0}(\phi^2_{k,\bar{k},\bar{\bar{k}}})\bar{k}\otimes \bar{k}\Big)\dd x\Big|+\Big|e_{low}\Big|\notag\\
\leq&\frac{1}{1000}\delta_{q+2},~~\forall t\in[1-\tau_q,T].
   \end{align}
This completes the proof of Proposition \ref{p:energy}.
\end{proof}

\begin{prop}[Helicity estimate]
\label{h:energy}For all $t\in[1-\tau_q,T]$, we have
    \[\bigg|h(t)-\int_{\mathbb T^3} v_{q+1}\cdot b_{q+1}\dd x -\frac{\delta_{q+2}}{200}\bigg| \le \frac{\delta_{q+2}}{1000} .\]
\end{prop}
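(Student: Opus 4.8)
The plan is to mirror the proof of Proposition~\ref{p:energy} (the energy estimate), exploiting the fact that $(\whq,\dhq)$ has been designed precisely so that its cross helicity contribution matches the gap $h_{b,q}$, analogously to how $(\wpqu)$ matched $\rho_{v,q}$. First I would decompose, for $t\in[1-\tau_q,T]$,
\begin{align*}
h(t)-\int_{\mathbb T^3} v_{q+1}\cdot b_{q+1}\dd x - \tfrac{\delta_{q+2}}{200}
=&\Big[h(t)-\int_{\mathbb T^3}\vv_q\cdot\bb_q\dd x - \tfrac{\delta_{q+2}}{200}\Big]
-\int_{\mathbb T^3} w_{q+1}\cdot d_{q+1}\dd x\\
&-\int_{\mathbb T^3}(w_{q+1}\cdot\bb_q + d_{q+1}\cdot\vv_q)\dd x,
\end{align*}
and then, using $\whq=\dhq$, extract from $\int w_{q+1}\cdot d_{q+1}$ the principal term $\int (|\whq|^2+\text{cross terms with }\wpq,\dpq)\dd x$, collecting everything else into a ``low'' remainder $h_{low}$.

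The key computation is the identity for the main term. Since $\wpqu\perp\wpqb$, $\wpqu\perp\whq$, $\dpq\perp\dhq$ (all by the disjoint-support property \eqref{suppkk'} and the orthogonality of $\Lambda_v,\Lambda_b,\Lambda_s$), the only surviving products in $\int (\wpq+\whq)\cdot(\dpq+\dhq)\dd x$ are $\int\wpqb\cdot\dpq\dd x$ and $\int|\whq|^2\dd x$. For the first, $\wpqb\cdot\dpq = \sum_{l;k\in\Lambda_b}a_{b,l,k}^2\psi_k^2\phi_{k,\bar k,\bar{\bar k}}^2\,\bar k\cdot\bar{\bar k}=0$ since $\bar k\perp\bar{\bar k}$. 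For the second, $\int|\whq|^2\dd x = \int \sum_l\eta_l^2 h_{b,q}\sum_{k\in\Lambda_s}a_{s,k}^2\psi_k^2\phi_{k,\bar k,\bar{\bar k}}^2\,|\bar{\bar k}|^2\dd x$; after replacing $\psi_k^2 = \mathbb P_{>0}(\psi_k^2)+1$ and $\phi_{k,\bar k,\bar{\bar k}}^2 = \mathbb P_{>0}(\phi^2)+1$ and using the $\Lambda_s$ geometric lemma's normalization together with the definition \eqref{h:energy-q-i-gap} of $h_{b,q}$ and \eqref{h0:helicity-gap} of $h_q$, the leading piece becomes exactly $3h_q(t)+\tfrac{\delta_{q+2}}{200} \cdot(\dots)$, i.e. it reproduces $h(t)-\int\vv_q\cdot\bb_q\dd x - \tfrac{\delta_{q+2}}{200}$ up to the high-frequency error $\int \sum a_{s,k}^2[\mathbb P_{>0}(\psi_k^2)\phi^2 + \mathbb P_{>0}(\phi^2)]|\bar{\bar k}|^2\dd x$. (I should double-check here whether the relevant sum over $\Lambda_s$ is normalized to sum to $1$ rather than $3$, which is why the $\Lambda_s$ set and the normalization of $\psi_k\phi_{k,\bar k,\bar{\bar k}}$ in Section~\ref{ss:mikado} were arranged; the bookkeeping constants $200$, $1000$ versus the energy's $2$, $100$ reflect this.) This cancellation is where most of the care is needed.

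For the remainder I would invoke Remark~\ref{xiao} and Proposition~\ref{c:estimates-for-wpq-dpq}: every term in $h_{low}$ is a pairing in which at least one factor lies among $\{\wcq,\wsq,\wtq,\wlq,\wvq,\dcq,\dsq,\dtq,\dlq,\dvq\}$, each bounded by $\lambda_{q+1}^{-50\alpha}\delta_{q+2}$ in $L^2$, while the other factor is bounded in $L^2$ by $O(\delta_{q+1}^{1/2})$ (for $\vv_q,\bb_q,\wpq,\dpq,\whq,\dhq$) or again by $\lambda_{q+1}^{-50\alpha}\delta_{q+2}$; Cauchy--Schwarz then gives $|h_{low}|\lesssim \lambda_{q+1}^{-50\alpha}\delta_{q+2}\ll \tfrac{1}{4000}\delta_{q+2}$. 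The high-frequency errors $\int a_{s,k}^2\mathbb P_{>0}(\cdots)\bar{\bar k}\otimes\bar{\bar k}\dd x$ are handled exactly as in \eqref{high frequence1}: write $\mathbb P_{>0}g = |\nabla|^L|\nabla|^{-L}\mathbb P_{>0}g$ with $L$ large, move the derivatives onto the slowly-varying amplitude $a_{s,k}^2$ (oscillating at frequency $\ell_q^{-5}$) and use that $\phi_{k,\bar k,\bar{\bar k}}$ oscillates at frequency $\gtrsim\sigma=\lambda_{q+1}^{1/128}\gg\ell_q^{-5}$, so the error is $\ll\tfrac{1}{4000}\delta_{q+2}$. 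Assembling these bounds, the total deviation is $\le \tfrac{1}{1000}\delta_{q+2}$, which is the claim. The main obstacle is the bookkeeping in the main-term identity — verifying that the combination of $\eta_l$'s, the definition of $h_{b,q}$ via \eqref{h:energy-q-i-gap}, the $\aleph$ and $\eta_{-1}$ cutoffs, and the $\Lambda_s$ normalization conspire to give precisely the gap $h(t)-\int\vv_q\cdot\bb_q\dd x$ (with the correct $\delta_{q+2}/200$ shift) and not some constant multiple of it, paralleling but differing in constants from the energy case.
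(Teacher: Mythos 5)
Your proposal follows essentially the same route as the paper's proof: the same decomposition of the helicity gap, the same observation that $\int_{\mathbb T^3}\wpq\cdot\dpq\,\dd x=0$ (via $\bar k\perp\bar{\bar k}$ and the disjoint supports), the same identification of $\int_{\mathbb T^3}\whq\cdot\dhq\,\dd x$ with the gap through the normalization of $h_{b,q}$, and the same treatment of the low-order and high-frequency remainders via Remark~\ref{xiao} and the $|\nabla|^{-L}\mathbb P_{>0}$ argument. The normalization point you flag is resolved as you suspect: since $\Lambda_s$ is a singleton and $\whq\cdot\dhq$ is a scalar with $|\bar{\bar k}|^2=1$, the leading term is a single copy $h_{b,q}\sum_l\int\eta_l^2$ (not three), which is why the paper's own proof uses $h_q(t)=h(t)-\int\vv_q\cdot\bb_q\,\dd x-\tfrac{\delta_{q+2}}{200}$ without the factor $\tfrac13$ appearing in \eqref{h0:helicity-gap} — a discrepancy in the paper's bookkeeping rather than in your argument.
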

\begin{proof}
Similar to the proof of Proposition \ref{p:energy}, it suffices to control the helicity for $t\in[1-\tau_q,T]$. Due to
\begin{align}
 h_{b,q} (t) =\frac{ h_q(t)}{  \sum_{l=0}^{N_q} \int_{\mathbb T^3}\eta^2_l  ( x,t)\dd x },~~~t\in[1-\tau_q,T],
\end{align}
we rewrite that
\begin{align}\label{hel0}
 &h(t)-\int_{\mathbb T^3} v_{q+1}\cdot b_{q+1}\dd x-\tfrac{\delta_{q+2}}{200}\notag\\
=& \Big[h(t)-\int_{\mathbb T^3} \vv_q\cdot \bb_q\dd x -\tfrac{\delta_{q+2}}{200}\Big]
-\int_{\mathbb T^3} w_{q+1}\cdot d_{q+1} \dd x
-\int_{\mathbb T^3} w_{q+1}\cdot \bb_q +\vv_q\cdot d_{q+1} \dd x\notag\\
:=&\Big[h(t)-\int_{\mathbb T^3}\vv_q\cdot \bb_q \dd x -\tfrac{\delta_{q+2}}{200}\Big]  -\int_{\mathbb T^3} \whq\cdot \dhq \dd x-\int_{\mathbb T^3}\wpq\cdot\dpq\dd x+h_{low,0}\notag\\
=&\Big[h(t)-\int_{\mathbb T^3}\vv_q\cdot \bb_q \dd x -\tfrac{\delta_{q+2}}{200}\Big]  -\int_{\mathbb T^3} \whq\cdot \dhq \dd x-0+h_{low,0},
\end{align}
where
\begin{align}
h_{low,0}:=&-\int_{\mathbb T^3}
(w_{q+1}-\whq-\wpq)\cdot (d_{q+1}-\dhq-\dpq)+(\whq+\wpq)\cdot (d_{q+1}-\dhq-\dpq)\notag\\
&+(w_{q+1}-\whq-\wpq)\cdot (\dhq+\dpq)\dd x.
\end{align}
By the definitions of $\whq$ and $\dhq$ in \eqref{e:defn-whq}, we obtain that
\begin{align}\label{hel2}
&\int_{\mathbb T^3}\whq\cdot\dhq \dd x= \int_{\mathbb T^3}\sum_{l;k\in\Lambda_s} \eta^2_{l}h_{b,q}(t)   \psi^2_k\phi^2_{k,\bar{k},\bar{\bar{k}}}\bar{k}\dd x \notag\\
    =&h_{b,q}\int_{\mathbb T^3}\sum_{l;k\in\Lambda_s}\eta^2_{l}\dd x+
    \Big\{h_{b,q}\int_{\mathbb T^3}\sum_{l;k\in\Lambda_s}\eta^2_{l}   \mathbb{P}_{>0}(\psi^2_k)\phi^2_{k,\bar{k},\bar{\bar{k}}}\bar{k} \dd x
    +h_{b,q}\int_{\mathbb T^3}\sum_{l;k\in\Lambda_s}\eta^2_{l}   \mathbb{P}_{>0}(\phi^2_{k,\bar{k},\bar{\bar{k}}})\bar{k}\dd x\Big\}\notag\\
    :=&h_q(t)+h_{low,1}.
\end{align}
Using the standard integration by parts and Remark \ref{xiao}, one deduces that
\begin{align}\label{hel3}
|h_{low,0}|+|h_{low,1}|\lesssim \lambda^{-50\alpha}_{q+1}\delta_{q+2}.
\end{align}
Since $h_q(t)=h(t)-\int_{\mathbb T^3}\vv_q\cdot\bb_q \dd x -\frac{\delta_{q+2}}{200}$, plugging \eqref{hel2} and \eqref{hel3} into \eqref{hel0} yields that
$$\bigg|h(t)-\int_{\mathbb T^3} v_{q+1}\cdot b_{q+1}\dd x -\frac{\delta_{q+2}}{200}\bigg|\le |h_{low,0}|+|h_{low,1}|\lesssim \lambda^{-50\alpha}_{q+1}\delta_{q+2} \le \frac{\delta_{q+2}}{1000}.$$
Therefore, we complete the proof of the Proposition \ref{h:energy}.
\end{proof}

In conclusion, collecting all the results in Section 3--Section 6, we prove Proposition \ref{p:main-prop}.

\noindent\textbf{Acknowledgements}
 We thank the anonymous referee  and the associated editor for their invaluable comments
which helped to improve the paper. This work was supported by the National Key Research and Development Program of China (No. 2020YFA0712900) and NSFC Grant 11831004.

\appendix
\section{Appendix}
\subsection{Inverse divergence operator}
\label{ss:inverse-div}
Recalling the following pseudodifferential operator of order $-1$ as in  \cite{2Beekie}:
\begin{align}
    {\mathcal{R} v}^{kl} = \partial_k\Delta^{-1}v^l+ \partial_l\Delta^{-1}v^k-\frac{1}{2}
    (\delta_{kl}+\partial_k\partial_l\Delta^{-1})\div\Delta^{-1}v,\label{Ru}\\
      {\mathcal{R}_{a} u}^{ij} = \epsilon_{ijk}(-\Delta)^{-1}(\curl u)_k,\label{RRu}
\end{align}
where $\epsilon_{ijk}$ is the Levi-Civita tensor, $i,j,k\in\{1,2,3\}$.  One can easily verify that $\mathcal{R}$ is a matrix-valued right inverse of the divergence operator, such that $\div\mathcal{R}v=v$ for mean-free vector fields $v$. In addition, $\mathcal Rv$ is traceless and symmetric. While $\mathcal{R}_{a}$ is also a matrix-valued right inverse of the divergence operator, such that $\div\mathcal{R}_au=u$ for divergence free vector fields $u$. And $\mathcal{R}_{a}u$ is anti-symmetric.

\subsection{Some notations in geometric lemmas}\label{Geo}
Using the same idea as in \cite{2Beekie}, we give the definitions of $\Lambda_b$, $\Lambda_v$ and  $\Lambda_s$  by
\begin{align*}
&\Lambda_b=\Big\{e_1,e_2,e_3,-\tfrac{4}{5}e_2-\tfrac{3}{5}e_3,
\tfrac{3}{5}e_1+\tfrac{4}{5}e_2\Big\},\\
&\Lambda_v=\{\tfrac{12}{13}e_1\pm\tfrac{5}{13}e_2,\tfrac{5}{13}e_1\pm\tfrac{12}{13}e_3,
\tfrac{12}{13}e_2\pm\tfrac{5}{13}e_3\},\\
&\Lambda_s=\Big\{\tfrac{9}{41}e_1+\tfrac{40}{41}e_2\Big\}.
\end{align*}
 For fixed $k\in\Lambda_b\cup \Lambda_v\cup \Lambda_s$, one can obtain a unique triple $(k,\bar{k},\bar{\bar{k}})$. See the following table:
 \begin{table}[ht]
\begin{tabular}{p{1.7cm}|p{1.7cm}|p{0.4cm}|p{1.7cm}|p{1.7cm}|p{0.4cm}|p{1.7cm}|p{1.7cm}|p{0.4cm}}
\toprule[1pt]
\multicolumn{3}{c|} {$k\in \Lambda_b, \,\,\,k\perp \bar{k}\perp \bar{\bar{k}}$} &\multicolumn{3}{|c|} {$k\in \Lambda_v, \,\,\,k\perp \bar{k}\perp \bar{\bar{k}}$}&\multicolumn{3}{|c}{$k\in \Lambda_s, \,\,\,k\perp \bar{k}\perp \bar{\bar{k}}$}\\\hline
$k$&$\bar{k}$&$\bar{\bar{k}}$&$k$&$\bar{k}$&$\bar{\bar{k}}$ &$k$&$\bar{k}$&$\bar{\bar{k}}$\\\hline
$e_1$&{$ e_2$}& $e_3$&$\tfrac{12}{13}e_1\pm\tfrac{5}{13}e_2$&{$ \tfrac{5}{13}e_1\mp\tfrac{12}{13}e_3$}& $e_3$&$\tfrac{9}{41}e_1+\tfrac{40}{41}e_2$& $\tfrac{40}{41}e_1-\tfrac{9}{41}e_2$& $e_3$\\
$e_2$&{$ e_3$}&$e_1$& $\tfrac{5}{13}e_1\pm\tfrac{12}{13}e_3$&$\tfrac{5}{13}e_1\mp\tfrac{12}{13}e_3$&{$ e_2$}&&&\\
$e_3$&{$ e_1$}& $e_2$&$\tfrac{12}{13}e_2\pm\tfrac{5}{13}e_3$&{$ \tfrac{12}{13}e_2\mp\tfrac{5}{13}e_3$}& $e_1$&&&\\
$-\tfrac{4}{5}e_2-\tfrac{3}{5}e_3$&{$-\tfrac{3}{5}e_2-\tfrac{4}{5}e_3$}&$ e_1 $&  & & &&&\\
$\tfrac{3}{5}e_1+\tfrac{4}{5}e_2$&{$\tfrac{4}{5}e_1-\tfrac{3}{5}e_2$}& $e_3$& & & &&&\\\bottomrule[1pt]
\end{tabular}
\end{table}


\newpage
\subsection{Two useful lemmas}
\begin{lem}[\cite{2Beekie,13Nonuniqueness}]\label{holder1}
Fix positive integers $L,\kappa,\lambda\geq1$ such that
$$\frac{\lambda}{\kappa}\leq\frac{1}{30}~~~and
~~~\frac{\lambda^{L+4}}{\kappa^L}\leq\frac{1}{30}.$$
Let $p\in\{1,2\}$, and $f$ be a $\mathbb{T}^3$-periodic function such that there exists a constant $C_f$ such that
$$\|D^jf\|_{L^p}\leq C_f\lambda^j,~~~ j\in[1,L+4].$$
Moreover, assume that $g$ is a $(\mathbb{T}/\kappa)^3$-periodic function. Then, we have
$$\|fg\|_{L^p}\lesssim C_f\|g\|_{L^p},$$
where the implicit constant is universal.
\end{lem}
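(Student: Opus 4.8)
\textbf{Proof plan for Lemma \ref{holder1}.} The plan is to exploit the large separation between the frequency $\lambda$ of $f$ and the much higher frequency $\kappa$ of $g$: since $g$ is $(\mathbb T/\kappa)^3$-periodic, its Fourier support lies in $\kappa\mathbb Z^3$, so one expects $fg$ to behave roughly like $\bar f\, g$ with $\bar f$ the average of $f$, up to controlled error terms. The cleanest way to make this rigorous is to set $\kappa^{-1}\mathbb T^3$-averaging against a smooth partition: partition $\mathbb T^3$ into $\kappa^3$ cubes $Q$ of side $\kappa^{-1}$, each being a period cell for $g$, and on each cube replace $f$ by its value at the center, writing $f(x) = f(x_Q) + (f(x)-f(x_Q))$ for $x\in Q$. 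This is exactly the approach used in \cite{2Beekie,13Nonuniqueness}, and I would reproduce it.

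First I would treat the $p=1$ and $p=2$ cases uniformly by Hölder/Minkowski on each cube. On a cube $Q$ of side $\kappa^{-1}$ we have, by the fundamental theorem of calculus, $\|f - f(x_Q)\|_{L^\infty(Q)}\lesssim \kappa^{-1}\|Df\|_{L^\infty(Q)}$, and more generally a Taylor expansion of $f$ at $x_Q$ to order $L$ gives a remainder of size $\lesssim \kappa^{-L}\|D^{L}f\|_{L^\infty}$ plus lower-order polynomial pieces of size $\kappa^{-j}\|D^j f\|_{L^\infty}$. The hypotheses only give $L^p$ control of $D^j f$, not $L^\infty$ control, so here I would interpolate: since $f$ has frequency support essentially at scale $\lambda$ (this is where the assumption $\|D^j f\|_{L^p}\le C_f\lambda^j$ for $j$ up to $L+4$ is used — it furnishes enough derivatives for a Bernstein-type inequality), one has $\|D^j f\|_{L^\infty}\lesssim \lambda^{j+3/p}\cdot(\text{something}\lesssim C_f)$ by Bernstein on $\mathbb T^3$; combined with the smallness conditions $\lambda/\kappa\le 1/30$ and $\lambda^{L+4}/\kappa^L\le 1/30$, the polynomial correction terms are summable and dominated. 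The main term $\sum_Q f(x_Q)\,g\mathbf 1_Q$ has $L^p$ norm $\le \big(\sum_Q |f(x_Q)|^p \|g\|_{L^p(Q)}^p\big)^{1/p} = \|g\|_{L^p}\cdot\big(\text{avg of }|f(x_Q)|^p\big)^{1/p}$ by periodicity of $g$, and the discrete average of $|f(x_Q)|^p$ over the cubes is comparable to $\|f\|_{L^p}^p\lesssim C_f^p$ (again using that $f$ oscillates at scale $\lambda\ll\kappa$, so it does not vary much within a cube — this is the $j=1$ estimate). Assembling these gives $\|fg\|_{L^p}\lesssim C_f\|g\|_{L^p}$.

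I expect the main obstacle to be bookkeeping rather than conceptual: one must pass from the $L^p$ bounds on $D^j f$ in the hypothesis to the pointwise/$L^\infty$ bounds needed to control both (i) the per-cube oscillation of $f$ and (ii) the Taylor remainders, and verify that the two quantitative smallness assumptions on $\lambda/\kappa$ are exactly what makes the geometric-series-type sum over correction orders converge with a universal constant. The case $p=1$ is slightly more delicate than $p=2$ because one cannot use orthogonality/Plancherel, but the partition-into-period-cubes argument sidesteps this entirely and works for both $p$ simultaneously. Since this lemma is quoted verbatim from \cite{2Beekie,13Nonuniqueness}, I would in fact simply cite those references for the full details and only indicate the partition argument above as the idea.
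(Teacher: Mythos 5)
The paper does not actually prove this lemma --- it is quoted verbatim from \cite{2Beekie,13Nonuniqueness} with no argument supplied --- and your outline correctly reproduces the standard period-cell decomposition proof from those references (partition into $\kappa^{-1}$-cubes on which $g$ has constant $L^p$ mass, freeze $f$ at the cell centers, control the Taylor/oscillation corrections order by order, and use the two smallness hypotheses to sum the resulting geometric series), so your plan is the intended one. The only imprecision worth flagging is that $f$ is not band-limited, so the pointwise bounds $\|D^jf\|_{L^\infty}\lesssim C_f\lambda^{j+4}$ should come from the Sobolev embedding $W^{4,p}(\mathbb T^3)\hookrightarrow L^\infty$ applied to the four extra derivatives in the hypothesis rather than from a genuine Bernstein inequality; the condition $\lambda^{L+4}/\kappa^{L}\le 1/30$ is calibrated exactly to absorb the resulting $\lambda^{4}$ loss at the terminal order, as you anticipated.
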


One can obtain the following Lemma after some modifications in Lemma B.1 in \cite{13Nonuniqueness}.
\begin{lem}\label{holder2}
Fixed $\kappa>\lambda\geq1$ and $p\in (1,2]$. Assume that there exists an integer $L$ with $\kappa^{L-2}>\lambda^{L}$. Let $f$ be a $\mathbb{T}^3$-periodic function so that there exists a constant $C_f$ such that
$$\|D^jf\|_{L^{p_1}}\lesssim \lambda^j\|f\|_{L^{p_1}},~~~ j\in[0,L].$$
Assume further that $\int_{\mathbb{T}^3}f(x)\mathbb{P}_{\geq \kappa}g(x)dx=0$ and $g$ is a $(\mathbb{T}/\kappa)^3$-periodic function. Then, we have
$$\||\nabla|^{-1}(f\mathbb{P}_{\geq \kappa}g)\|_{L^p}\lesssim  \tfrac{\|f\|_{L^{p_1}}\|g\|_{L^{p_2}}}{\kappa},~~~~\tfrac{1}{p}=\tfrac{1}{p_1}+\tfrac{1}{p_2},$$
where the implicit constant is universal.
\end{lem}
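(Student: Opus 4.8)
The plan is to follow the strategy of Lemma~B.1 in \cite{13Nonuniqueness}, adapting it to the two Hölder exponents $p_1,p_2$ and to the normalization $C_f=\|f\|_{L^{p_1}}$. Since $g$ is $(\mathbb T/\kappa)^3$-periodic, its Fourier support lies in $\kappa\mathbb Z^3$, so $\mathbb P_{\geq\kappa}g=\mathbb P_{>0}g=g-\int_{\mathbb T^3}g\,\dd x$ is mean-free and its nonzero Fourier modes all satisfy $|\xi|\geq\kappa$; the hypothesis $\int_{\mathbb T^3}f\,\mathbb P_{\geq\kappa}g\,\dd x=0$ then guarantees that $f\,\mathbb P_{\geq\kappa}g$ is mean-free as well, so $|\nabla|^{-1}(f\,\mathbb P_{\geq\kappa}g)$ is well defined. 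The first step is to split $f=f_{\mathrm{lo}}+f_{\mathrm{hi}}$ by a smooth Littlewood--Paley cutoff at frequency $\kappa/2$: write $f_{\mathrm{lo}}:=\chi(D/\kappa)f$ with $\chi$ a smooth bump supported in $\{|\xi|\leq 1/2\}$ and equal to $1$ on $\{|\xi|\leq 1/4\}$, and $f_{\mathrm{hi}}:=f-f_{\mathrm{lo}}$, so that $\widehat{f_{\mathrm{hi}}}$ is supported in $\{|\xi|\geq\kappa/4\}$.

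For the low piece, the key observation is that $f_{\mathrm{lo}}$ has frequencies $\leq\kappa/2$ while $\mathbb P_{\geq\kappa}g$ has frequencies $\geq\kappa$, so the product $f_{\mathrm{lo}}\,\mathbb P_{\geq\kappa}g$ is supported at frequencies $\geq\kappa/2$. Hence $|\nabla|^{-1}$ acts on it as a Fourier multiplier of magnitude $\lesssim\kappa^{-1}$ satisfying Mikhlin--Hörmander bounds with that factor, so it is bounded $L^p\to L^p$ with norm $\lesssim\kappa^{-1}$ (here $1<p<\infty$ is used). Combining this with Hölder's inequality and the $L^{p_1}$- and $L^{p_2}$-boundedness of the Littlewood--Paley projector and of $\mathbb P_{\geq\kappa}$ (which on our $g$ is just the identity minus the mean), one gets $\||\nabla|^{-1}(f_{\mathrm{lo}}\,\mathbb P_{\geq\kappa}g)\|_{L^p}\lesssim\kappa^{-1}\|f_{\mathrm{lo}}\|_{L^{p_1}}\|\mathbb P_{\geq\kappa}g\|_{L^{p_2}}\lesssim\kappa^{-1}\|f\|_{L^{p_1}}\|g\|_{L^{p_2}}$.

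For the high piece, the hypothesis on the derivatives of $f$ gives a Bernstein-type decay estimate: summing the dyadic blocks of frequency $\gtrsim\kappa$, $\|f_{\mathrm{hi}}\|_{L^{p_1}}\lesssim\kappa^{-L}\max_{|\alpha|=L}\|D^\alpha f\|_{L^{p_1}}\lesssim\kappa^{-L}\lambda^L\|f\|_{L^{p_1}}$. Since $f_{\mathrm{hi}}\,\mathbb P_{\geq\kappa}g$ is mean-free and all nonzero frequencies on $\mathbb T^3$ satisfy $|\xi|\geq1$, the operator $|\nabla|^{-1}$ is bounded on mean-free $L^p$ functions with norm $O(1)$, so $\||\nabla|^{-1}(f_{\mathrm{hi}}\,\mathbb P_{\geq\kappa}g)\|_{L^p}\lesssim\|f_{\mathrm{hi}}\|_{L^{p_1}}\|g\|_{L^{p_2}}\lesssim\kappa^{-L}\lambda^L\|f\|_{L^{p_1}}\|g\|_{L^{p_2}}$. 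Adding the two contributions and invoking the assumed numerology $\kappa^{L-2}>\lambda^L$, which gives $\kappa^{-L}\lambda^L=\kappa^{-1}\bigl(\lambda^L\kappa^{-(L-1)}\bigr)<\kappa^{-2}<\kappa^{-1}$, one obtains the claimed bound $\||\nabla|^{-1}(f\,\mathbb P_{\geq\kappa}g)\|_{L^p}\lesssim\kappa^{-1}\|f\|_{L^{p_1}}\|g\|_{L^{p_2}}$.

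I do not expect a serious obstacle: this is a routine commutator/inverse-divergence estimate. The only points that require care are (i) using smooth rather than sharp Fourier cutoffs, so that the projectors onto $\{|\xi|\lesssim\kappa\}$ and $\{|\xi|\gtrsim\kappa\}$ are bounded on $L^{p_1}$, $L^{p_2}$ and $L^p$ uniformly in $\kappa$ (via the Mikhlin--Hörmander multiplier theorem on $\mathbb T^3$, valid for $1<r<\infty$); (ii) checking that $|\nabla|^{-1}$ composed with a Littlewood--Paley projection onto $\{|\xi|\gtrsim N\}$ is $L^p$-bounded with the gain $N^{-1}$, and that $|\nabla|^{-1}$ on mean-free functions is $L^p$-bounded with norm $O(1)$; and (iii) the bookkeeping that converts the pointwise derivative bounds on $f$ into the decay of $f_{\mathrm{hi}}$. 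The conditions $\kappa>\lambda\geq1$ and $\kappa^{L-2}>\lambda^L$ are used solely to absorb the high-frequency error into the main $\kappa^{-1}$ term.
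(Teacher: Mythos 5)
Your argument is correct, and it is essentially the proof the paper intends: the paper gives no proof of this lemma but refers to Lemma~B.1 of \cite{13Nonuniqueness}, whose proof is exactly your low--high frequency decomposition of $f$ at scale $\kappa$, with the low piece gaining $\kappa^{-1}$ from $|\nabla|^{-1}$ acting on frequencies $\gtrsim\kappa$ and the high piece controlled by the Bernstein-type decay $\|f_{\mathrm{hi}}\|_{L^{p_1}}\lesssim(\lambda/\kappa)^{L}\|f\|_{L^{p_1}}$ together with the numerology $\kappa^{L-2}>\lambda^{L}$. The only modifications relative to the cited lemma are the ones you made (H\"older with the pair $(p_1,p_2)$ instead of $L^\infty\times L^p$, and the normalization $C_f=\|f\|_{L^{p_1}}$), and your verification that $f_{\mathrm{hi}}\,\mathbb{P}_{\geq\kappa}g$ is mean-free is the right way to use the hypothesis $\int_{\mathbb T^3}f\,\mathbb{P}_{\geq\kappa}g\,\dd x=0$.
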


\addcontentsline{toc}{section}{\refname}
\bibliographystyle{plain}
\bibliography{reference}

\end{document}